\documentclass[11pt]{amsart}

\usepackage{amssymb}
\usepackage{amsthm}
\usepackage{amsmath}
\usepackage{mathrsfs}
\usepackage{mathtools}
\usepackage[bbgreekl]{mathbbol}
\usepackage{amsbsy}
\usepackage{wasysym}
\usepackage{stmaryrd}
\usepackage{hyperref}
\usepackage{tikz}
\usepackage{array}
\usepackage{enumerate}
\usepackage{hhline}
\usepackage[style=alphabetic,maxbibnames=99,maxalphanames=99]{biblatex}
\usepackage{graphicx}
\usepackage{color}

\usepackage{geometry}
\usepackage[noabbrev,capitalise]{cleveref}

\newenvironment{enumerate*}%
{\begin{enumerate}[(I)]%
\setlength{\itemsep}{10pt}%
\setlength{\parskip}{0pt}}%
{\end{enumerate}}

\newtheorem{Thm}{Theorem}[section]
\newtheorem{Prop}[Thm]{Proposition}
\newtheorem{Cor}[Thm]{Corollary}
\newtheorem{Conj}[Thm]{Conjecture}
\newtheorem{Ques}[Thm]{Question}

\newtheorem{Lm}[Thm]{Lemma}

\theoremstyle{definition}
\newtheorem{Df}[Thm]{Definition}
\newtheorem{Rmk}[Thm]{Remark}
\newtheorem{Ex}[Thm]{Example}

\newcommand{\R}{\ensuremath{\mathbb R}} 
\newcommand{\C}{\ensuremath{\mathbb C}} 
\newcommand{\G}{\ensuremath{\mathbb G}} 
\newcommand{\deq}{\ensuremath{\coloneqq}}
\newcommand{\pa}[1]{\left(#1\right)}

\DeclareMathOperator{\Hex}{Hex}
\DeclareMathOperator{\Aut}{Aut}

\DeclareMathOperator{\Hom}{Hom}
\DeclareMathOperator{\Pb}{\mathbb{P}}
\DeclareMathOperator{\Hb}{\mathbb{H}}
\DeclareMathOperator{\K}{\mathbb{K}}

\DeclareMathOperator{\F}{\mathbb{F}}
\DeclareMathOperator{\eps}{\epsilon}
\DeclareMathOperator{\bp}{\boxplus}
\DeclareMathOperator{\Nc}{\mathcal{N}}
\DeclareMathOperator{\Fb}{\mathbb{F}}
\DeclareMathOperator{\Pib}{\mathbb{\Pi}}

\title{On the asymptotic behavior of finite hyperfields}
\author{Tuong Le, Chayim Lowen}

\address[]{Department of Mathematics, Princeton University, Princeton, NJ 08540, USA}
\email{tl0101@princeton.edu}

\address[]{Department of Mathematics, Princeton University, Princeton, NJ 08540, USA}
\email{chayiml@princeton.edu}

\begin{document}
\begin{abstract}
    Hobby has recently shown that almost all finite hyperfields of even order fail to be the quotient of a field. Using a probabilistic argument, we extend this result to all orders: a finite hyperfield is almost always non-quotient. This confirms a conjecture of Baker--Jin. We show that in almost every finite hyperfield the sum of any four or more nonzero elements contains 0. We also give a precise asymptotic for the number of finite hyperfields on a given finite abelian group.
\end{abstract}

\maketitle

\section{Introduction and summary of results}\label{sec:intro}
Hyperfields are a generalization of fields obtained by relaxing the requirement that addition be single-valued.
These were introduced by Krasner \cite{Krasner56} as a means of studying valued fields. 
Recently, hyperfields have received renewed interest in light of the Baker--Bowler theory of matroids over hyperfields (see \cite{baker17})---a framework which simultaneously generalizes matroids, oriented matroids and valuated matroids and has seen applications in the theory of Lorentzian polynomials (\cite{triangular}, \cite{huang}).

In \cite{Krasner1983}, Krasner 
gave a method---generalizing the one in \cite{Krasner56}---to construct hyperfields as quotients of fields by multiplicative subgroups. He then asked if every hyperfield arises in this way.
A negative answer was given in \cite{Massouros} where a general schema for constructing counterexamples is provided. In \cite{Jin}, Baker and Jin give an algorithm for determining which finite hyperfields are quotients of finite fields and, additionally, a necessary criterion for a hyperfield to be the quotient of an infinite field. Based on their results, they conjectured that almost all finite hyperfields are non-quotients:
\begin{Conj}[{\cite[Question (4)]{Jin}}]\label[Conj]{conj:100}
    Let $\mathcal{H}_n$ be the set of isomorphism classes of hyperfields of order $n$, and let $\mathcal{Q}_n \subseteq \mathcal{H}_n$ be the subset of those which are isomorphic to a quotient of some field. Then
    \[
        \lim_{n \to \infty} \frac{\#\mathcal{Q}_n}{\#\mathcal{H}_n} = 0.
    \]
\end{Conj}
In \cite{hobby}, Hobby showed that \Cref{conj:100} holds if we restrict our attention to hyperfields of even order $n$.
Our main result is a full proof of this conjecture. We in fact prove the following stronger result.
\begin{Thm}\label[Thm]{thm:main}
        For a finite abelian group $G$ and $\epsilon \in G$ of order at most $2$, 
        let $\mathcal{H}(G, \epsilon)$ be the set of hyperfields with underlying group $G$ in which $-1$ is given by $\epsilon$---up to isomorphism preserving $\epsilon$. Let also $\mathcal{I}(G, \epsilon)$ be the subset of these that are homomorphic images of skew fields. Then\footnote{Our asymptotic notation follows Knuth's widely-used conventions set forth in \cite{Knuth}.
        Importantly, the expression $f = \Omega(g)$ for us means that there exists a \emph{positive} constant $C > 0$ such that 
$f(n) \geq Cg(n)$ for all sufficiently large $n$. This differs markedly from the definition in Hardy and Littlewood \cite[\textsection 2.21]{Hardy}. 
}
        \[
             \frac{\#\mathcal{I}(G, \epsilon)}{\#\mathcal{H}(G, \epsilon)} = e^{-\Omega(\#G)}.
        \]
\end{Thm}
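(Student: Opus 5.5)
The plan is to compare an upper bound on $\#\mathcal{I}(G,\epsilon)$ with a lower bound on $\#\mathcal{H}(G,\epsilon)$, using a probabilistic model for the latter. Put $n=\#G$. A hyperfield on $G\cup\{0\}$ with $-1=\epsilon$ is determined by the family of sets $A_g \deq 1+g \subseteq G\cup\{0\}$ for $g\in G$, since $a+b=a(1+a^{-1}b)$. Reversibility ($x\in a+b \iff a\in x-b$) and commutativity impose only an involutive symmetry on the incidence relation $\{(g,x): x\in A_g\}$. Let $\mathcal{R}$ be the set of symmetric incidence relations that satisfy these pairing constraints, so that after the identifications there are about $n^2/6$ free binary choices. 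Equip $\mathcal{R}$ with the uniform distribution; equivalently, include each orbit of the symmetry independently with probability $1/2$.

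First I will show that associativity holds with probability $1-e^{-\Omega(n)}$. The idea is that any sum of three nonzero elements, $a+b+c=\bigcup_{x\in b+c}(a+x)$, is a union of about $n/2$ random sets. Such a union is all of $G\cup\{0\}$ except with probability $2^{-\Omega(n)}$ per target element. A union bound over the $O(n^3)$ triples and targets then gives that all triple sums are full, apart from a small explicit family of degenerate triples involving $0$ or cancellation. Once all triple sums are full, associativity follows because both bracketings give the full set. The same argument applied to sums of four or more elements yields the statement about $0$ lying in such sums. It follows that $\#\mathcal{H}(G,\epsilon)\ge (1-e^{-\Omega(n)})\,\#\mathcal{R}/\#\Aut(G)$, which is $2^{n^2/6-O(n\log n)}$.

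Next I will bound $\#\mathcal{I}(G,\epsilon)$. An image of a skew field $K$ is $K^\times/H\cup\{0\}$ for a normal subgroup $H$ with $K^\times/H\cong G$. Its addition is governed by a strong rigidity: whenever $x\in 1+g$, multiplying lifts by elements of $H$ propagates this membership in a correlated way. I will extract one concrete consequence of this, namely a local implication among the memberships $x\in 1+g$, $y\in 1+h$, $xy^{-1}\in\cdots$ coming from the identity $(1+\tilde g)(1+\tilde h)=1+\tilde g+\tilde h+\tilde g\tilde h$ in $K$. The goal is to show that every image satisfies an implication of this kind that the random model violates at many disjoint configurations. Concretely, I would find about $\Omega(n)$ pairwise independent configurations, each violating the implication with probability bounded below by a constant. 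Independence then gives
\[
\Pr[\text{image}] \le (1-c)^{\Omega(n)} = e^{-\Omega(n)}.
\]
Converting this probability statement back into counts of isomorphism classes costs at most a factor $\#\Aut(G)\le 2^{O(\log^2 n)}$, which is absorbed into the bound.

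The main obstacle is this last step: identifying an algebraic constraint satisfied by every homomorphic image of a skew field, including images of infinite skew fields, where the Baker--Jin counting algorithm is unavailable, that fails independently on linearly many configurations. My first attempt would use the four-term identity above. If $x\in 1+g$ and $y\in 1+h$ via lifts $\tilde g,\tilde h$, then $xy$ lies in $1+g+h+gh$ with a correlated choice of lift. This yields a cross-constraint between $A_g$, $A_h$ and $A_{gh}$ that should fail with constant probability for fixed disjoint triples $(g,h,gh)$. If that constraint turns out to be implied by the fullness of multi-sums, I would fall back on the small-sum structure (the sets $1+1$ and $1+\epsilon$, and the elements $g$ with $0\in 1+1+g$). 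There I would try to show that an image forces $\{g: 1\in A_g\}$ to be closed under a multiplicative operation, which a random relation almost surely violates.
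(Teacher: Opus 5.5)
Your lower bound on $\#\mathcal{H}(G,\epsilon)$ is essentially the paper's argument: a random nullset, an ``exists $t$ making two hexagons present'' condition checked on linearly many disjoint configurations, then division by $\#\Aut G$. There is one slip: triple sums are not full with high probability, because $0\in a\boxplus b\boxplus c$ is decided by the single hexagon $\overline{(a,b,c)}$. This is harmless, since that membership is automatically the same for both bracketings.

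The genuine gap is the upper bound on images, which is the real content of the theorem and which you leave open. Your candidate constraint is vacuous for three reasons:
\begin{itemize}
\item In \emph{every} hyperfield, $x\in 1\boxplus g$ and $y\in 1\boxplus h$ already give $xy\in x(1\boxplus h)\subseteq (1\boxplus g)\boxplus(1\boxplus g)h=1\boxplus g\boxplus h\boxplus gh$.
\item The correlation of lifts is invisible in the hyperfield data.
\item In almost every hyperfield that four-term sum is the whole hyperfield. This holds in every 4-full one, which is exactly what your own associativity argument produces.
\end{itemize}
More structurally, a homomorphic image need not be the quotient. A surjection $\phi\colon K\to\mathbb{H}$ only gives $\Hex(K/\ker\phi)\subseteq\Hex(\mathbb{H})$. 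So any property you exploit must be upward-closed in the nullset, and implications of the form ``these memberships force that one'' are not. The fallback about $(1\boxplus-1)\setminus\{0\}$ being closed under some operation is not substantiated.

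The missing ingredient is the theorem of Bergelson--Shapiro and Turnwald: if $\Gamma\le K^\times$ has index $n$ and $\#K^\times>n^4$ (in particular if $K$ is infinite), then $\Gamma-\Gamma=K$. This yields a dichotomy.
\begin{itemize}
\item If $K$ is large, the quotient has $1\boxplus-1$ equal to the whole hyperfield, and hence so does every hyperfield on $G$ receiving a surjection from $K$. Equivalently, $\overline{(\epsilon,x)}$ lies in the nullset for all $x\in G$. This monotone event involves at least $n/6$ distinct, independently chosen hexagons, so it has probability at most $2^{-n/6}$.
\item Otherwise $K$ is a finite field (Wedderburn) with $\#K\le n^4+1$, so there are at most $n^4$ candidate quotients $\Theta$ on $G$. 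Fix one of them and one of the $e^{o(n)}$ automorphisms of $G$. A bijective morphism from $\Theta$ to the random pasture inducing that automorphism forces the images of the at least $(n-1)/6$ hexagons of $\Theta$ into the nullset. This has probability at most $2^{-(n-1)/6}$.
\end{itemize}
A union bound gives $e^{-\Omega(n)}$, and passing to isomorphism classes costs only a factor $e^{O((\log n)^2)}$. Your instinct that images of infinite skew fields are the crux is right. The resolution is this additive-combinatorial input, not a local constraint violated on many independent configurations.
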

\begin{Cor}\label[Cor]{cor:100}
    Asymptotically almost no finite hyperfield is isomorphic to the quotient of a skew field.\footnote{
    Hyperfields are commutative by definition, but they may be quotients of non-commutative skew fields. See \Cref{rmk:commutativeskewquotient} for an example.}
\end{Cor}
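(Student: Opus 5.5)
The plan is to deduce \Cref{cor:100} from \Cref{thm:main} by regrouping isomorphism classes of hyperfields of order $n$ according to their multiplicative group and their element $-1$, and then applying the elementary inequality
\[
\frac{\sum_i a_i}{\sum_i b_i} \le \max_i \frac{a_i}{b_i} \qquad (a_i \ge 0,\ b_i > 0).
\]

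The first step is to check that $-1$ is intrinsic. In any hyperfield $H$ there is a unique element $x$ with $0 \in 1 \boxplus x$, and this element is $-1$. Hence every isomorphism of hyperfields $H \to H'$ sends $-1_H$ to $-1_{H'}$; in particular it preserves $\epsilon$.

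Next, fix $n$ and a hyperfield $H$ of order $n$. Its multiplicative group $H^\times$ is a finite abelian group of order $n-1$, and $-1_H$ is an element of order at most $2$. Choose once and for all:
\begin{itemize}
\item a set of representatives $G$ for the isomorphism classes of abelian groups of order $n-1$;
\item for each such $G$, a set of representatives $\epsilon$ for the $\Aut(G)$-orbits of elements of order at most $2$.
\end{itemize}
Transporting the structure along a group isomorphism $H^\times \cong G$, and then along an automorphism of $G$, identifies $H$ with a hyperfield structure on $G$ whose $-1$ is one of the chosen $\epsilon$.

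By the first step, two such structures are isomorphic as hyperfields exactly when they are isomorphic via a map preserving $\epsilon$. Structures with different representative pairs $(G,\epsilon)$ are never isomorphic. This gives a bijection
\[
\mathcal{H}_n \;\cong\; \bigsqcup_{(G,\epsilon)} \mathcal{H}(G,\epsilon).
\]
Under this bijection, $\mathcal{Q}_n$ lands inside $\bigsqcup_{(G,\epsilon)} \mathcal{I}(G,\epsilon)$. This is because a Krasner quotient $K/U$ of a field $K$ by a multiplicative subgroup $U$ is the image of the surjective homomorphism $K \to K/U$, and every field is a skew field. The same holds with "field" replaced by "skew field" throughout, which is the form \Cref{cor:100} actually asserts.

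Combining the bijection with the displayed inequality gives
\[
\frac{\#\mathcal{Q}_n}{\#\mathcal{H}_n} \le \frac{\sum_{(G,\epsilon)} \#\mathcal{I}(G,\epsilon)}{\sum_{(G,\epsilon)} \#\mathcal{H}(G,\epsilon)} \le \max_{(G,\epsilon)} \frac{\#\mathcal{I}(G,\epsilon)}{\#\mathcal{H}(G,\epsilon)}.
\]
Pairs with $\mathcal{H}(G,\epsilon)$ empty can be dropped. Applying \Cref{thm:main}, the right-hand side is at most $e^{-c(n-1)}$ for $n$ large, and this tends to $0$, which proves \Cref{cor:100}.

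The one point needing care is uniformity. We need the constant $c>0$ and the threshold in the $\Omega$ of \Cref{thm:main} to be independent of $G$ and $\epsilon$, i.e. to depend only on $\#G$. The number of pairs $(G,\epsilon)$ of order $n-1$ grows with $n$, so a bound that is merely pairwise would not suffice for the maximum. I expect the proof of \Cref{thm:main} to give this uniformity directly, since its probabilistic estimates should depend only on $\#G$. In fact, uniformity is exactly how the statement $e^{-\Omega(\#G)}$ as $\#G\to\infty$ over all pairs is to be read.

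Alternatively, since the number of pairs is $n^{O(1)}$ (at most $n^{O(\log n)}$ even crudely), it would suffice to know $\#\mathcal{I}(G,\epsilon) \le e^{-c\,\#G}\,\#\mathcal{H}(G,\epsilon)$ with uniform $c$ and a uniform threshold. The argument above already uses only that.
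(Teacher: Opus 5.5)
Your proposal is correct and is the deduction the paper leaves implicit. You split $\mathcal{H}_n$ by the pair $(G,\epsilon)$, using that isomorphisms preserve $-1$, and apply \Cref{thm:main}. Its bound is indeed uniform in $(G,\epsilon)$, as you anticipated: every constant in \Cref{prop:prob}, \Cref{prop:G-G}, \Cref{prop:surjmorphism}, \Cref{cor:1-1} and \Cref{lm:modiso} is absolute, and every threshold depends only on $\#G$.
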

We go further and identify a relatively simple class of hyperfields which asymptotically comprises almost all finite hyperfields. 
\begin{Df}\label[Df]{df:4-full}
        A \emph{4-full} hyperfield is a hyperfield $\mathbb{H} \neq \mathbb{F}_2$ in which
        \[
            0 \in a \boxplus b \boxplus c \boxplus d
        \]
        holds for all $a, b, c, d \in \mathbb{H}^{\times}$.
\end{Df}
    \begin{Rmk}
        A 4-full hyperfield $\mathbb{H}$ has the property that 
        $0 \in a_1 \boxplus \cdots \boxplus a_m$ for all $a_1, \dots, a_m \in \mathbb{H}^\times$ whenever $m \geq 4$. This is one reason for excluding the field $\mathbb{F}_2$.
    \end{Rmk}
\begin{Df}\label[Df]{df:0/0}
    A hyperfield $\mathbb{H}$ has the 0/0 property if for all
    $x \in \mathbb{H}$ we can find $r, s \in 1 \boxplus -1$ with $s \neq 0$ such that $x = rs^{-1}$. Informally, we have ``$\frac{1-1}{1-1} = \mathbb{H}$''---hence the name.
\end{Df}
\begin{Rmk}
    The $0/0$ hyperfields are precisely the ones for which (categorical) products can naturally be constructed. See \Cref{sec:appendix} for a precise statement and proof.
\end{Rmk}
    \begin{Thm}\label[Thm]{thm:runner_up}
        For a finite abelian group $G$ and $\epsilon \in G$ of order at most $2$, let $\mathcal{H}(G, \epsilon)$ be the set of hyperfields with underlying group $G$ in which $-1$ is given by $\epsilon$---up to isomorphism preserving $\epsilon$. Let also $\mathcal{F}(G, \epsilon)$ be the subset of these that are both 4-full and $0/0$. Then         
        \[
             \frac{\#\mathcal{F}(G, \epsilon)}   {\#\mathcal{H}(G, \epsilon)} = 1 - e^{-\Omega(\# G)}.
        \]
    \end{Thm}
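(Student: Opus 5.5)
The plan is to model a hyperfield on $(G,\epsilon)$ by its \emph{zero-sum relation}. Hyperaddition on $\mathbb{H}=G\cup\{0\}$ is determined by the set $T$ of triples $(a,b,c)\in (G)^3$ with $0\in a\boxplus b\boxplus c$, i.e.\ $-c\in a\boxplus b$. Commutativity and reversibility say that $T$ is invariant under $S_3$. Distributivity says that $T$ is invariant under the diagonal action of $G$. Write $n=\#G$ and let $\mathcal{O}$ be the set of orbits of $G^3$ under $S_3\times G$, so that $\#\mathcal{O}=N\approx n^2/6$. Apart from associativity and the nondegeneracy conditions ($x\boxplus y\neq\emptyset$, and $0\in x\boxplus y$ iff $y=\epsilon x$), every hyperfield on $(G,\epsilon)$ is the same thing as a subset of $\mathcal{O}$; a bounded number of orbits involving $\epsilon$ are forced. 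Hence the number of labeled hyperfields is at most $2^{N}$. I will compare labeled counts first and pass to isomorphism classes at the end.

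\textbf{Step 1 (lower bound via a random model).} Choose each free orbit in $\mathcal{O}$ independently with probability $1/2$. I claim that with probability $1-e^{-\Omega(n)}$ the result is a hyperfield. Nonemptiness of each $1\boxplus x$ fails with probability $2^{-\Omega(n)}$ per $x$. For associativity, it suffices that $a\boxplus b\boxplus c=\mathbb{H}$ (or the forced subset when some partial sum contains $0$) for every nonzero $a,b,c$, computed with either bracketing. By scaling we may take $a=1$. Then $(1\boxplus b)\boxplus c=\bigcup_{d\in 1\boxplus b}d\boxplus c$ is a union of about $n/2$ sets, each containing a given $z$ with probability $1/2$. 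We need near-independence here: the orbits deciding ``$d\in 1\boxplus b$'' and ``$z\in d\boxplus c$'' are distinct for all but $O(1)$ values of $d$. So we extract a subfamily of $\Omega(n)$ values of $d$ whose relevant orbits are pairwise distinct, and Chernoff-type bounds give failure probability $2^{-\Omega(n)}$. A union bound over the $O(n^3)$ choices of $(b,c,z)$ keeps the total at $e^{-\Omega(n)}$. Thus the number of labeled hyperfields is $(1-e^{-\Omega(n)})\,2^{N}$ up to the fixed forced factor.

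\textbf{Step 2 (the bad events are rare).} In the same model I bound the probability of failing to be 4-full or 0/0. For 4-fullness, scale to $a=1$. The condition $0\notin 1\boxplus b\boxplus c\boxplus d$ forces $1\boxplus b$ and $\epsilon(c\boxplus d)$ to be disjoint. Both sets have about $n/2$ elements, and after discarding $O(1)$ coincident orbits they are independent, so disjointness has probability $2^{-\Omega(n)}$. A union bound over $O(n^3)$ triples then gives $e^{-\Omega(n)}$. For the $0/0$ property, $1\boxplus\epsilon$ is a random set $R$ of size about $n/2$. For each $x$ we need $r,s\in R$ with $r=xs$. The events $s\in R,\ xs\in R$ are independent along a matching of size $\Omega(n)$ on the orbits of $s\mapsto xs$, so failure has probability $2^{-\Omega(n)}$; a union bound over $x$ completes this part. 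Because the total count of hyperfields is at least $(1-e^{-\Omega(n)})2^N$ while all structures number at most $2^N$, the labeled hyperfields that are not 4-full or not 0/0 form an $e^{-\Omega(n)}$ fraction.

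\textbf{Step 3 (isomorphism classes).} Both properties are isomorphism-invariant. By Burnside, $\#\mathcal{H}(G,\epsilon)=\frac{1}{\#A}\sum_{\sigma\in A}\mathrm{Fix}(\sigma)$, where $A=\Aut(G,\epsilon)$, and the same formula holds for the bad subclass. A nontrivial $\sigma$ moves $\Omega(n)$ elements and hence merges $\Omega(n^2)$ orbits of $\mathcal{O}$, so $\mathrm{Fix}(\sigma)\le 2^{N-\Omega(n^2)}$. Since $\#A\le n^{\log_2 n}$, the nonidentity terms are negligible, and the ratio of classes equals the labeled ratio up to a factor $1+e^{-\Omega(n)}$. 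The main obstacle I expect is the dependence bookkeeping in Steps 1 and 2: one must check that after scaling, the orbits governing the relevant membership events really are distinct apart from $O(1)$ exceptions, uniformly in small or 2-torsion-heavy $G$. I would handle this by explicitly extracting a linear-size subfamily with disjoint orbits before applying Chernoff bounds.
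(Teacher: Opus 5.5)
Your proposal is correct, and its probabilistic core is the paper's. You sample the nullset uniformly, extract $\Omega(n)$ indices whose hexagon pairs are pairwise disjoint (each hexagon meets boundedly many pairs), and take a union bound over polynomially many configurations.

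The packaging differs in two places.
\begin{enumerate}[(1)]
\item The paper certifies everything with one estimate. By \Cref{prop:4fullpasture}, the single condition $(\star)$ is equivalent to being a 4-full $0/0$ hyperfield, and \Cref{prop:prob} shows $(\star)$ holds with probability $1-e^{-\Omega(n)}$. You instead check associativity, 4-fullness and $0/0$ separately. Your Step 1 and the 4-fullness part of Step 2 are literally the same event. The existence of $d$ with $\overline{(1,b,-d)},\overline{(d,c,-z)}\in\mathcal{N}$ is $(\star)$ for $(1,b,\epsilon c,z)$, with your exceptional case $b=\epsilon$, $z=c$ being exactly its degenerate case $y=\epsilon x$, $w=\epsilon z$. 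That degenerate case is precisely the $0/0$ condition. The paper's route therefore avoids the duplication.
\item To pass to isomorphism classes, the paper's \Cref{lm:modiso} uses only $\#S\le p\,2^{N}$ and $\#\mathcal{H}(G,\epsilon)\ge(1-e^{-\Omega(n)})\,2^{N}/\#\Aut G$. It absorbs the loss $\#\Aut G\le e^{(\log_2 n)^2}=e^{o(n)}$. Your Burnside argument gives the sharper fact that class ratios and labeled ratios nearly coincide. However, it needs the fixed-point bound of \Cref{prop:noauto}, which you only sketch and which this theorem does not require.
\end{enumerate}

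A few statements should be tightened, though none is a gap.
\begin{enumerate}[(a)]
\item No orbits are forced. The rule $0\in x\boxplus y\iff y=\epsilon x$ is built into the definition of $\boxplus$ from the nullset.
\item The associativity criterion should read $(a\boxplus b)\boxplus c\supseteq G$ for all nonzero $a,b,c$. Whether $0$ lies in the triple sum is, for both bracketings, the single event $\overline{(a,b,c)}\in\mathcal{N}$. So it agrees automatically, and demanding it would fail with probability $1/2$ per orbit.
\item The two hexagons attached to a given $d$ (or to a given $s$ in Step 2) need not be distinct for all but $O(1)$ values. For $z=1$ and $b=\epsilon c$ they coincide for every $d$. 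This is harmless: coincidence within a pair only raises the success probability. The independence you actually need is across different indices, and your extraction of a disjoint subfamily supplies it.
\end{enumerate}
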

    \begin{Cor}\label[Cor]{cor:4-full}
    Asymptotically almost all finite hyperfields are 4-full and 0/0.
\end{Cor}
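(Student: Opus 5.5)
We will deduce this directly from \Cref{thm:runner_up} by partitioning hyperfields of order $n$ according to their multiplicative group and their element $-1$, and then averaging.

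First we organize $\mathcal{H}_n$. A hyperfield $\mathbb{H}$ of order $n$ has multiplicative group $\mathbb{H}^\times$, which is a finite abelian group of order $n-1$. It also has a distinguished element $\epsilon=-1\in\mathbb{H}^\times$, and $\epsilon^2=1$. Any isomorphism of hyperfields restricts to a group isomorphism of multiplicative groups. It also sends $-1$ to $-1$, since $-1$ is characterized intrinsically as the unique $x$ with $0\in 1\boxplus x$. Consequently,
\[
\mathcal{H}_n=\bigsqcup_{(G,[\epsilon])}\mathcal{H}(G,\epsilon),
\]
where $G$ runs over isomorphism classes of abelian groups of order $n-1$ and $[\epsilon]$ runs over $\Aut(G)$-orbits of elements of order at most $2$. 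We choose one representative $\epsilon$ per orbit. The point to verify is that "isomorphism preserving $\epsilon$" in \Cref{thm:runner_up} coincides with plain hyperfield isomorphism on this piece, which holds by the characterization of $-1$ just given. Both being 4-full and having the $0/0$ property are invariant under isomorphism. Therefore, writing $\mathcal{F}_n$ for the 4-full $0/0$ classes in $\mathcal{H}_n$, we get $\mathcal{F}_n=\bigsqcup \mathcal{F}(G,\epsilon)$ over the same index set.

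Next we bound each piece. \Cref{thm:runner_up} gives constants $C,c>0$ and $N$ such that
\[
\#\mathcal{F}(G,\epsilon)\ge \bigl(1-Ce^{-c\#G}\bigr)\,\#\mathcal{H}(G,\epsilon) \qquad\text{whenever } \#G\ge N.
\]
Summing over the finitely many pairs $(G,[\epsilon])$ with $\#G=n-1$ yields
\[
\frac{\#\mathcal{F}_n}{\#\mathcal{H}_n}\ge 1-Ce^{-c(n-1)}\longrightarrow 1 .
\]
This is because a ratio of sums is at least the minimum of the individual ratios, regardless of how the pieces are weighted.

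The main obstacle is this summation step: it requires $C,c,N$ to be uniform over all pairs $(G,\epsilon)$. Uniformity is the intended reading of \Cref{thm:runner_up}, since the asymptotic is in $\#G$ alone with $G$ and $\epsilon$ otherwise arbitrary. If the theorem were only available pair by pair, the argument could fail. The fix would be to go back to its proof and confirm that the probabilistic estimates depend only on $\#G$. Once uniformity is granted, the corollary follows, and it even comes with the exponential rate $1-e^{-\Omega(n)}$.
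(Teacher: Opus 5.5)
Your proposal is correct and is essentially the deduction the paper intends; the paper states this corollary without a separate proof. You partition $\mathcal{H}_n$ by the pair $(G,\epsilon)$, apply \Cref{thm:runner_up}, and sum over the pieces, just as the paper does in its proof of \Cref{cor:asymptotic}. Your uniformity concern is already settled by the paper: the constants in \Cref{prop:prob} and \Cref{lm:modiso} are absolute and the thresholds depend only on $\#G$, so the bound $1-e^{-\Omega(\#G)}$ holds uniformly over all pairs $(G,\epsilon)$.
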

    It follows from \cite[Theorem 4.1]{fusion} that every 4-full hyperfield is perfect\footnote{This notion of perfection is unrelated to the identically-named notion in field theory. It is instead related to the well-behavedness of the theory of matroids over the given hyperfield.} in the sense of \cite{baker19}. Hence \Cref{cor:4-full} shows:
    \begin{Cor}
         Almost all finite hyperfields are perfect.
    \end{Cor}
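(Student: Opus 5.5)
The statement is the corollary that almost all finite hyperfields are perfect. The plan is to derive it from \Cref{cor:4-full} together with the cited result \cite[Theorem 4.1]{fusion}, which says that every 4-full hyperfield is perfect in the sense of \cite{baker19}. All the probabilistic work is contained in \Cref{thm:runner_up}; what remains is to check how ``almost all'' is counted.

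First fix the counting convention. Let $\mathcal{P}_n\subseteq\mathcal{H}_n$ be the set of isomorphism classes of perfect hyperfields of order $n$. By \cite[Theorem 4.1]{fusion}, every 4-full hyperfield is perfect, so $\mathcal{F}(G,\epsilon)$ is contained in the set of perfect members of $\mathcal{H}(G,\epsilon)$. It therefore suffices to show that the proportion of 4-full classes in $\mathcal{H}_n$ tends to $1$. Discarding the $0/0$ condition only helps.

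Next, pass from fixed $(G,\epsilon)$ to fixed order $n$. Partition $\mathcal{H}_n$ by the isomorphism type of the multiplicative group $G$ of order $n-1$ together with the orbit of $\epsilon$ under $\Aut(G)$. Two hyperfields are isomorphic exactly when there is a group isomorphism carrying one hyperaddition to the other; such a map must send $-1$ to $-1$. Hence $\mathcal{H}_n$ is the disjoint union, over these finitely many pairs $(G,\epsilon)$ up to equivalence, of the sets $\mathcal{H}(G,\epsilon)$. Being 4-full is an isomorphism invariant, so the same decomposition applies to the 4-full classes.

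By \Cref{thm:runner_up}, in each block the non-4-full classes number at most $e^{-c\,\#G}\,\#\mathcal{H}(G,\epsilon)$ for $\#G$ large. The main obstacle is uniformity: the implied constant $c$ and the threshold for ``sufficiently large'' must not depend on the pair $(G,\epsilon)$. I would check that the proof of \Cref{thm:runner_up} gives bounds depending only on $\#G$; this is expected, since the probabilistic argument uses only the group size. Given uniformity, summing over blocks shows that the number of non-4-full classes is at most $e^{-c(n-1)}\#\mathcal{H}_n$. So the proportion of perfect hyperfields is at least $1-e^{-\Omega(n)}\to 1$.

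Should uniformity fail to be explicit, a weaker fallback suffices. The number of blocks is at most the number of abelian groups of order $n-1$ times $n$, which is $e^{o(n)}$. It is then enough that each block individually has a non-4-full proportion tending to $0$ uniformly, which again reduces to the uniform form of \Cref{thm:runner_up}.
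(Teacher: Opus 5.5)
Your proposal is correct and follows the paper's route: you combine \Cref{cor:4-full} (coming from \Cref{thm:runner_up}) with \cite[Theorem 4.1]{fusion}, which says every 4-full hyperfield is perfect. Your explicit aggregation over the blocks $\mathcal{H}(G,\epsilon)$ spells out a step the paper leaves implicit, and the uniformity you worry about does hold, since the constants and thresholds in \Cref{prop:prob} and \Cref{lm:modiso} depend only on $\#G$; so your fallback, and the count of blocks, are not needed.
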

    In \cite[Question 5.13]{rank}, Baker, Solomon and Zhang asked if every hyperfield has the so-called FETVINS property. This means that every system of $m$ homogeneous linear equations in $m+1$ unknowns
    has a nonzero solution.
    The proof of \cite[Theorem 5.2]{hobby} shows that every 4-full hyperfield with the 0/0 property also has the FETVINS property.\footnote{More precisely, in his Lemma 5.4, Hobby shows that what he calls \emph{ample} hyperfields have the property labeled ($\star$) in our \Cref{prop:4fullpasture} and the proof of his Theorem 5.4 shows that ($\star$) implies FETVINS. In \Cref{prop:4fullpasture}, we show that ($\star$) is equivalent to the conjunction of the 4-fullness and 0/0 properties.}
    Hence \Cref{cor:4-full} shows:
    \begin{Cor}
          Almost all finite hyperfields have the FETVINS property.
    \end{Cor}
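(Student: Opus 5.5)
The plan is to deduce this from \Cref{cor:4-full} (more precisely \Cref{thm:runner_up}) together with Hobby's argument. There are two ingredients. The first is a deterministic implication: every hyperfield that is both 4-full and $0/0$ has the FETVINS property. The second is a counting step: it transfers the statement ``$1-e^{-\Omega(\#G)}$ of the hyperfields on each $(G,\epsilon)$'' to ``almost all isomorphism classes of hyperfields of order $n$.''

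For the deterministic implication I will use the equivalence announced in the footnote. By \Cref{prop:4fullpasture}, a hyperfield is 4-full and $0/0$ if and only if it satisfies property ($\star$). I will then follow the proof of \cite[Theorem 5.2]{hobby}, which uses only ($\star$) (Hobby derives ($\star$) from ampleness in his Lemma 5.4, and nothing else is used afterwards). Concretely, given $m$ homogeneous linear equations in $m+1$ unknowns, one argues by induction on $m$. Eliminating one variable uses the $0/0$ part of ($\star$): this ensures that suitable ratios of elements of $1\boxplus -1$ realize any prescribed coefficient. Verifying that the remaining equations contain $0$ uses 4-fullness: once an equation has four or more nonzero terms, it automatically holds. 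I will check that each step of Hobby's proof invokes only ($\star$), so that the argument transfers verbatim.

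For the counting step, fix $n$ and let $B_n\subseteq\mathcal{H}_n$ be the set of classes that are not (4-full and $0/0$). Both properties are isomorphism invariant, and every class in $\mathcal{H}_n$ lies in the image of some $\mathcal{H}(G,\epsilon)$ with $\#G=n-1$. The map $\mathcal{H}(G,\epsilon)\to\mathcal{H}_n$ has fibers of size at most $\#\operatorname{Aut}(G)$. Hence
\[
\frac{\#B_n}{\#\mathcal{H}_n}\le \frac{\sum_{G,\epsilon}\#(\mathcal{H}(G,\epsilon)\setminus\mathcal{F}(G,\epsilon))}{\sum_{G,\epsilon}\#\mathcal{H}(G,\epsilon)/\#\operatorname{Aut}(G)}\le \max_{G}\#\operatorname{Aut}(G)\cdot e^{-cn}.
\]
Since $\#\operatorname{Aut}(G)\le (n-1)^{\log_2 (n-1)}$ (an automorphism is determined by its values on a generating set of size at most $\log_2\#G$), this bound tends to $0$.

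The main obstacle is the uniformity of the constant in \Cref{thm:runner_up}. The bound $e^{-\Omega(\#G)}$ must hold with a constant $c$ independent of the particular group $G$ and of $\epsilon$. Otherwise the maximum over the abelian groups of order $n-1$ could fail to decay. I expect the probabilistic proof of \Cref{thm:runner_up} to give such a uniform $c$, since it should depend only on $\#G$ through counting of triples. I would verify this explicitly and, if necessary, restate \Cref{thm:runner_up} with uniform constants before running the argument above.
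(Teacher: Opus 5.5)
Your proposal is correct and is essentially the paper's argument: the paper likewise combines \Cref{prop:4fullpasture} (4-full and $0/0$ together are equivalent to ($\star$)) with Hobby's proof that ($\star$) implies FETVINS, and then appeals to \Cref{cor:4-full}. Your explicit transfer from the sets $\mathcal{H}(G,\epsilon)$ to $\mathcal{H}_n$ just spells out what the paper leaves implicit in that corollary, and the uniformity you flag already holds, since \Cref{prop:prob} and \Cref{lm:modiso} are proved with absolute constants.
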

    It is a well-known principle that sufficiently rich combinatorial structures should generically have no automorphisms. This is indeed the case for finite hyperfields.
    \begin{Thm}\label[Thm]{thm:noauto}
        Let $G$ be a finite abelian group and $\epsilon \in G$ of order at most 2. Let 
        $\mathcal{H}(G, \epsilon)$ be the set of hyperfields with underlying group $G$ in which $-1$ is given by $\epsilon$---up to isomorphism preserving $\epsilon$. Let $\mathcal{A}(G, \epsilon)$ be the subset of these that have some non-trivial automorphism. Then
        \[
             \frac{\#\mathcal{A}(G, \epsilon)}{\#\mathcal{H}(G, \epsilon)} = e^{-\Omega(\# G^2)}.
        \]
    \end{Thm}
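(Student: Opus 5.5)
We would deduce \Cref{thm:noauto} from a labeled count rather than working with isomorphism classes directly. Let $\mathcal{L}(G,\epsilon)$ be the set of hyperfield structures on the fixed set $G\cup\{0\}$ with multiplication given by $G$ and $-1=\epsilon$. Two such structures are isomorphic exactly when some $\sigma\in\Aut(G)$ with $\sigma(\epsilon)=\epsilon$ carries one to the other. The stabilizer of a structure under this action is its automorphism group, so by orbit counting
\[
\#\mathcal{H}(G,\epsilon)\ \ge\ \frac{\#\mathcal{L}(G,\epsilon)}{\#\Aut(G)},\qquad
\#\mathcal{A}(G,\epsilon)\ \le\ \sum_{\sigma\neq 1}\#\mathrm{Fix}(\sigma).
\]
Here $\mathrm{Fix}(\sigma)$ is the set of labeled structures invariant under $\sigma$. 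Since $\#\Aut(G)\le \#G^{\log_2\#G}=e^{O(\log^2\#G)}$, it is enough to show that for every nontrivial $\sigma$
\[
\frac{\#\mathrm{Fix}(\sigma)}{\#\mathcal{L}(G,\epsilon)}\ \le\ e^{-c\,\#G^2}.
\]
The factor $\#\Aut(G)^2$ is then absorbed into the exponent.

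The first step is to parametrize labeled structures. By reversibility and multiplicativity, a hyperfield is determined by the set $S=1\boxplus 1\subseteq G\cup\{0\}$, or more symmetrically by the set $T$ of triples $(a,b,c)$ with $0\in a\boxplus b\boxplus c$. This set $T$ is invariant under $G$-scaling and under $S_3$, so the structure is really a subset of a set of orbits of size about $\#G^2/6$. Using the earlier theorems, which I assume give $\log\#\mathcal{L}=\Theta(\#G^2)$, the typical structure is $4$-full. I would use, or reprove along the same lines, the following fact: after the axioms are imposed, one can still choose an "upper layer" of free bits. Concretely, there is a family of roughly $c_0\#G^2$ orbits of triples that may be put into $T$ or left out independently, and every such choice yields a valid hyperfield (associativity being automatic by $4$-fullness once $T$ is large). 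This is the same mechanism that gives the asymptotic count, and I expect those counts to be $2^{(c_0+o(1))\#G^2}$ structures arising exactly this way.

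Next, fix $\sigma\neq1$. Being $\sigma$-invariant forces $T$ to be a union of $\sigma$-orbits on the free orbits of triples. I would show that $\sigma$ moves a positive proportion of these orbits. Let $K$ be the fixed subgroup of $\sigma$, which is proper. An orbit of triples, normalized as $(1,b,c)$, is fixed only if $(1,\sigma b,\sigma c)$ lies in the same $G\times S_3$ orbit. This forces $(b,c)$ into a set of size $O(\#G\cdot\#K/\#G\cdot\#G)$: roughly, $\sigma(b)$ must be one of $O(1)$ expressions in $b,c$. The result is at most about $(\tfrac12+o(1))\#G^2$ fixed pairs when $[G:K]=2$ and fewer otherwise, so at least a constant fraction of the free orbits are moved. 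Each nontrivial $\sigma$-cycle halves the choices on its orbits, so
\[
\#\mathrm{Fix}(\sigma)\le 2^{(c_0-c_1)\#G^2}
\]
for a constant $c_1>0$, and comparing with $\#\mathcal{L}$ gives the claim.

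The main obstacle is making the comparison rigorous without an exact formula for $\#\mathcal{L}$. We need an upper bound for $\mathrm{Fix}(\sigma)$ that applies to all invariant structures, not only those of the free-layer form, and that upper bound must beat a lower bound on $\#\mathcal{L}$ from the free-layer construction with the same leading constant $c_0$. My plan is to split $\mathrm{Fix}(\sigma)$ into two parts. The first is the $4$-full, $0/0$ ones, where the free-layer description is also an upper-bound description, since everything in $T$ outside a small forced core is free. The second is the rest, bounded by \Cref{thm:runner_up}, which says they form an $e^{-\Omega(\#G)}$ fraction. That is too weak, so I would instead redo its counting argument inside the $\sigma$-invariant setting to obtain $e^{-\Omega(\#G^2)}$ relative to $\#\mathcal{L}$. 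The counting of $\sigma$-fixed orbits when $\sigma$ is an involution with $[G:K]=2$ is the delicate case.
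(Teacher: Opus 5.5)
\textbf{There is a genuine gap: the lower bound on $\#\mathcal{L}(G,\epsilon)$ is never established with the right constant.}

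Your outer reduction is sound and is the same as the paper's. It uses orbit counting under $\Aut(G,\epsilon)$, a union bound over the $e^{O((\log\#G)^2)}$ nontrivial $\sigma$, and the fact that a fixed $\sigma$ moves a positive proportion of hexagons. The gap is exactly the step you flag as the main obstacle, and your plan does not close it.

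A bound $\#\mathrm{Fix}(\sigma)\le 2^{(c_0-c_1)\#G^2}$ is only useful against a lower bound $\#\mathcal{L}(G,\epsilon)\ge 2^{(c_0-o(1))\#G^2}$ with the \emph{same} $c_0$. You never pin down $c_0$, and knowing only $\log\#\mathcal{L}=\Theta(\#G^2)$ is not enough. The proposed split of $\mathrm{Fix}(\sigma)$ into $4$-full/$0/0$ structures and the rest is not carried out. The assertion that ``the free-layer description is also an upper-bound description'' is unjustified: different structures have different cores, and nothing makes a core/free decomposition canonical or $\sigma$-equivariant.

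\textbf{The missing idea: the free layer is essentially all of $\hexagon(G)$, i.e.\ $c_0=1/6$.}

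By \Cref{prop:4fullpasture}, condition $(\star)$ already makes a pasture a hyperfield. \Cref{prop:prob} shows that a \emph{uniformly random} $\mathcal{N}\subseteq\hexagon(G)$ satisfies $(\star)$ with probability $1-e^{-\Omega(\#G)}$. For fixed $(x,y,z,w)$ one extracts about $\#G/23$ values of $t$ whose hexagon pairs $h_t$ are pairwise disjoint. Each lies in $\mathcal{N}$ independently with probability at least $1/4$, and one then takes a union bound over the $\#G^4$ quadruples. Hence $\#\mathcal{L}(G,\epsilon)\ge(1-e^{-\Omega(\#G)})\,2^{\#\hexagon(G)}$.

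With this in hand, no axioms are needed on the other side. $\mathrm{Fix}(\sigma)$ sits inside the set of \emph{all} $\sigma$-invariant subsets of $\hexagon(G)$, which has size $2^{\#(\sigma\text{-orbits})}$. At most $\#\hexagon(G^\sigma)+5\#G\le(\tfrac14+o(1))\#\hexagon(G)$ hexagons are fixed, so there are at most $\tfrac23\#\hexagon(G)$ orbits. This is \Cref{prop:noauto}, and \Cref{lm:modiso} finishes.

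Your free-layer idea can also be made to work deterministically, because $(\star)$ is monotone in $\mathcal{N}$. It suffices to exhibit one set of $o(\#G^2)$ hexagons satisfying $(\star)$, and a random set of density $C\sqrt{\log\#G/\#G}$ does. But that is precisely the statement $c_0=1/6$ that you would need to prove.

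\textbf{A smaller point: count fixed hexagons using both coordinates.} A constraint on $\sigma(b)$ alone fails when $\sigma$ is inversion, since then $\sigma(b)=b^{-1}$ for every $b$. Instead, require $(\sigma u,\sigma v)$ to be one of the six pairs of $\overline{(u,v)}$. This gives $\#(G^\sigma)^2$ pairs for the identity permutation and at most $\#G$ for each of the other five, since one coordinate then determines the other.
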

    \begin{Cor}
        Almost all finite hyperfields have no non-identity automorphisms.
    \end{Cor}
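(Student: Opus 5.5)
We count labeled hyperfield structures on $(G,\epsilon)$ and pass to isomorphism classes only at the end. A hyperfield structure with $\mathbb{H}^\times = G$ and $-1=\epsilon$ is determined by the sets $1\boxplus x$ for $x\in G$, subject to the reversibility and associativity axioms. Equivalently, it is determined by the set of triples $T=\{(a,b,c)\in G^3: 0\in a\boxplus b\boxplus c\}$. This set is invariant under scaling by $G$ and under $S_3$, and its orbits are indexed by roughly $\#G^2/6$ pairs.

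\textbf{Lower bound on the total.} The first step is a lower bound of the form $\#\mathcal{H}_{\mathrm{lab}}(G,\epsilon)\ge 2^{c\#G^2}$, with $c$ essentially the number of free orbit choices. The natural source is the 4-full hyperfields: we expect that any reversible, symmetric choice of $T$ containing enough triples gives a hyperfield. The reason is that associativity is automatic once every sum of four nonzero elements contains everything. This is presumably the construction behind \Cref{thm:runner_up}.

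\textbf{Upper bound on those with an automorphism.} Let $\sigma\in\Aut(G)$ be nontrivial with $\sigma(\epsilon)=\epsilon$. A structure fixed by $\sigma$ has $T$ invariant under $\sigma$, so $T$ is determined by its values on the orbits of $\langle\sigma\rangle$ acting on the orbit set of triples. Let $k\ge 2$ be the order of $\sigma$; it suffices to treat $\sigma$ of prime order. The key combinatorial step is to show that $\sigma$ moves a positive fraction of the pairs $(x,y)\in G^2$, uniformly in $G$. The fixed subgroup $G^\sigma$ is proper, so $\#G^\sigma\le \#G/2$, and $(x,y)$ is fixed only when both coordinates lie in $G^\sigma$. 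Hence at most $\#G^2/4$ pairs are fixed, and at least $3\#G^2/4$ of them lie in orbits of size at least $2$. The number of $\sigma$-invariant symmetric sets $T$ is therefore at most $2^{(c-\delta)\#G^2}$ for some $\delta>0$, after accounting for the $S_3$ and $G$-scaling identifications (only $O(\#G)$ loss). The bookkeeping between scaling orbits and $\sigma$-orbits is the main technical point: the orbit count of the combined group on pairs must drop by a constant factor times $\#G^2$.

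\textbf{Union bound and passage to isomorphism classes.} The number of choices of $\sigma$ is $\#\Aut(G)\le \#G^{\log_2\#G}=2^{O(\log^2\#G)}$, since $G$ has a generating set of size at most $\log_2\#G$. Summing over $\sigma$, the labeled structures with a nontrivial automorphism make up a fraction at most $2^{O(\log^2\#G)-\delta'\#G^2}=e^{-\Omega(\#G^2)}$ of all labeled structures. For isomorphism classes, each class with trivial automorphism group has exactly $\#\Aut(G,\epsilon)$ labelings, and every class has at most that many. So
\[
\#\mathcal{H}(G,\epsilon)\ \ge\ \frac{\#\text{labeled}}{\#\Aut(G,\epsilon)},
\qquad
\#\mathcal{A}(G,\epsilon)\ \le\ \#\{\text{labeled with nontrivial stabilizer}\}.
\]
The extra factor $\#\Aut(G,\epsilon)=2^{O(\log^2\#G)}$ is absorbed into the exponent.

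The main obstacle is matching the lower bound on labeled hyperfields with an upper bound that is compatible with it. Concretely, one must know the total count up to a factor $2^{o(\#G^2)}$, and show that the invariant structures are confined to about $(c-\delta)\#G^2$ free bits. For the total I would take the structure count from the 4-full analysis: the free bits are essentially the membership decisions for $T$ on orbits. For the invariant structures, counting all $\sigma$-invariant reversible symmetric subsets $T$ is an honest upper bound, since it imposes no axioms beyond invariance. The comparison then reduces to the orbit-counting estimate above.
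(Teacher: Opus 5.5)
Your architecture matches the paper's proof of \Cref{thm:noauto}. It has three parts: a union bound over the $e^{O((\log n)^2)}$ automorphisms of $(G,\epsilon)$, a bound on $\sigma$-invariant nullsets coming from $\#G^\sigma\le n/2$, and an orbit--stabilizer passage to isomorphism classes (\Cref{prop:noauto} and \Cref{lm:modiso}).

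The genuine gap is the lower bound on the number of \emph{labeled hyperfields} on $(G,\epsilon)$, which you only say you ``expect''. The whole comparison rests on it. Your upper bound counts $\sigma$-invariant \emph{pastures}, at most about $2^{(5/8+o(1))\#\hexagon(G)}$ per $\sigma$, and this is negligible only if hyperfields are far more numerous, e.g.\ $2^{\#\hexagon(G)-o(n^2)}$. Citing \Cref{thm:runner_up} does not supply this, since it is a ratio between isomorphism classes and says nothing about their absolute number. Your claim that any symmetric $T$ containing enough triples gives a hyperfield is false if ``enough'' is a cardinality threshold. Removing from $\hexagon(G)$ the at most $n$ hexagons $\overline{(g,x)}$, $x\in G$, for a single $g\neq\epsilon$ already violates condition (A) of \Cref{prop:hexagon_characterization} (this is \Cref{prop:nonhyper}).

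What is true, and what the paper proves in \Cref{prop:prob}, is that a \emph{uniformly random} nullset satisfies \eqref{eq:starstar} with probability $1-e^{-\Omega(n)}$. Your reduction ``all four-fold sums contain $0$ $\Rightarrow$ associativity'' is the right one; it is exactly \eqref{eq:starstar} $\Rightarrow$ (A),(B). The missing idea is the probabilistic estimate. Fix $(x,y,z,w)$. The events for different $t$ are dependent, but each hexagon lies in at most $12$ of the pairs $h_t=\{\overline{(tx,ty)},\overline{(tz,tw)}\}$. So one can extract $\lceil n/23\rceil$ pairwise disjoint $h_t$, each contained in $\Nc$ with probability at least $1/4$, independently. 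Failure therefore has probability at most $(3/4)^{n/23}$. A union bound over the $n^4$ quadruples then gives at least $(1-e^{-\Omega(n)})2^{\#\hexagon(G)}$ labeled hyperfields.

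A smaller point: you count $\sigma$-fixed fundamental pairs, but invariance of the nullset is governed by $\sigma$-fixed \emph{hexagons}. A hexagon can be $\sigma$-stable with $\sigma$ acting on it through a nontrivial element of $S_3$. Your ``only $O(\#G)$ loss'' is correct, but it needs the short argument you defer. After choosing the representative triple appropriately, a transposition forces a representative pair of the form $(g,\sigma(g))$, and a $3$-cycle forces one of the form $(g,\sigma(g)^{-1})$. This gives at most $5n$ such hexagons. Hence at most $\#\hexagon(G^\sigma)+5n\le n^2/24+O(n)$ hexagons are fixed, and the number of $\langle\sigma\rangle$-orbits on $\hexagon(G)$ is at most $(5/8+o(1))\#\hexagon(G)$. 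That is the $\delta$ you need.
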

    Finally, helped by \Cref{thm:noauto}, we give an asymptotic formula for the number of isomorphism classes of hyperfields with a given abelian group $G$ as its multiplicative group.
    \begin{Thm}\label[Thm]{thm:count}
        Let $G$ be a sufficiently large\footnote{The requirement that $G$ be large is included to ensure that the expression $\frac{\#G[2]}{\#\Aut G}\cdot  2^{\frac{1}{6}(\#G^2 + 3\#G + 2\#G[3])}$ is an \emph{overestimate} for the number of hyperfields with underlying group $G$.} finite abelian group. Let 
        $\mathcal{H}(G)$ be the set of isomorphism classes of hyperfields with underlying group $G$. Then 
        \[
            \# \mathcal{H}(G)  = \left(1- e^{-\Theta(\#G)}\right) \frac{\#G[2]}{\# \Aut G}\cdot  2^{\frac{1}{6}(\#G^2 + 3\#G + 2\#G[3])}
        \]
        where $G[2]$ and $G[3]$ are, respectively, the 2-torsion and 3-torsion subgroups of $G$.\footnote{Here we employ Knuth's $\Theta$-notation: 
    $f = \Theta(g)$ is the conjunction of 
    $f = \Omega(g)$ and $g = \Omega(f)$.}
    \end{Thm}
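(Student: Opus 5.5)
The plan is to count hyperfield structures on $G$ \emph{labelled}, i.e.\ before passing to isomorphism classes, and then divide by the action of $\Aut G$, using \Cref{thm:noauto} to show that this action is almost free.

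\textbf{Step 1: encode hyperfields as triple systems.} Fix $\epsilon\in G[2]$ and put $n=\#G$. By multiplicativity, a hyperaddition on $G\cup\{0\}$ with $-1=\epsilon$ is determined by the set
\[
T=\{(a,b,c)\in G^3 : 0\in a\boxplus b\boxplus c\}.
\]
Reversibility together with $x\boxplus 0=\{x\}$ shows that $T$ is invariant under $S_3$ and under diagonal scaling by $G$. Conversely, every $S_3\times G$-invariant set $T$ defines a commutative, reversible, multiplicatively compatible hyperoperation with negation $\epsilon$. The only condition not automatic is associativity.

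\textbf{Step 2: count the orbits.} Dividing $G^3$ by diagonal scaling gives $G^2$, with $n^2$ points. Apply Burnside to the induced $S_3$-action:
\begin{itemize}
\item the identity fixes $n^2$ points;
\item each of the three transpositions fixes $n$ points;
\item each of the two $3$-cycles fixes the classes of $(1,b,c)$ with $(1,b,c)\sim(b,c,1)$, i.e.\ $c=b^2$ and $b^3=1$, so $\#G[3]$ points.
\end{itemize}
Hence there are $N=\tfrac16(n^2+3n+2\#G[3])$ orbits, and exactly $2^N$ candidate triple systems for each $\epsilon$.

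\textbf{Step 3: a random triple system is a hyperfield with probability $1-e^{-\Theta(n)}$.}
\begin{itemize}
\item Upper bound on failures. The computation in the proof of \Cref{thm:runner_up} (the $\star$-criterion of \Cref{prop:4fullpasture}) shows that outside a set of proportion $e^{-\Omega(n)}$, the system is 4-full and $0/0$. For such $T$ both sides of $(x\boxplus y)\boxplus z$ versus $x\boxplus(y\boxplus z)$ are essentially forced to be everything. So associativity holds for all but an $e^{-\Omega(n)}$ fraction of the $2^N$ systems.
\item Lower bound on failures. I need to exhibit $e^{-O(n)}\cdot 2^N$ non-associative systems. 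I would do this by fixing the membership of $O(n)$ orbits so that some sum $(1\boxplus a)\boxplus b$ is forced to differ from $1\boxplus(a\boxplus b)$. For instance, declare $1\boxplus a$ to be a single element; this prescribes about $n$ orbits, and leaves the remaining orbits free while guaranteeing failure.
\end{itemize}
I expect this $\Theta$ lower bound to be the main obstacle, since the witness configuration has to be chosen carefully enough to force a genuine violation of associativity. The upper bound is mostly inherited from \Cref{thm:runner_up}. Summing over $\epsilon$, the number of labelled hyperfields on $G$ is $(1-e^{-\Theta(n)})\,\#G[2]\,2^N$.

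\textbf{Step 4: pass to isomorphism classes.} Isomorphism classes correspond to $\Aut G$-orbits on pairs $(\epsilon,T)$, so the number of classes is
\[
\sum_{\text{orbits}} 1=\frac{1}{\#\Aut G}\sum_{(\epsilon,T)}\#\mathrm{Stab}(\epsilon,T).
\]
Split the sum according to whether the stabilizer is trivial.
\begin{itemize}
\item Pairs with trivial stabilizer contribute exactly their number divided by $\#\Aut G$.
\item Pairs with a nontrivial stabilizer form an $e^{-\Omega(n^2)}$ fraction by \Cref{thm:noauto}, applied at the labelled level, where it follows from the same fixed-orbit count. Each stabilizer has size at most $\#\Aut G\le 2^{O(\log^2 n)}$, so their total contribution is $e^{-\Omega(n^2)}\,\#G[2]\,2^N$ up to a subexponential factor. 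This is negligible against the $e^{-\Theta(n)}$ error.
\end{itemize}
This yields $\#\mathcal{H}(G)=(1-e^{-\Theta(n)})\frac{\#G[2]}{\#\Aut G}2^N$. Taking $G$ large ensures that the Step 3 deficit dominates the automorphism correction, so the stated expression is indeed an overestimate.
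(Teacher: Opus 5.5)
\textbf{The gap is in Step 3, in the lower bound on failures}, which is the step you flagged yourself. The rest of your plan matches the paper's proof. That is: the Burnside count of hexagons (\Cref{prop:count}); the upper bound on failures from ($\star$) (\Cref{prop:prob} together with \Cref{prop:4fullpasture}); an explicit bad configuration for the lower bound; and the rigidity estimate (\Cref{prop:noauto}) to pass to isomorphism classes. Your orbit--stabilizer bookkeeping in Step 4 is correct.

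Declaring $1\boxplus a$ to be a single element does \emph{not} guarantee that associativity fails, however the remaining orbits are chosen. Single-valued sums are fully compatible with the axioms. For instance, if $G\cong\mathbb{F}_q^{\times}$ and $\epsilon=-1$, the field $\mathbb{F}_q$ is a hyperfield in which $1\boxplus a=\{1+a\}$. So the prescription $1\boxplus a=\{1+a\}$ has at least one completion that is a hyperfield, and the claim ``guaranteeing failure'' is false in general. At best a singleton makes failure likely. Proving even that would need a separate probabilistic argument, for example that condition (B) of \Cref{prop:hexagon_characterization} fails for a random completion with probability bounded below. You do not supply such an argument.

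\textbf{The missing idea is to force an \emph{empty} sum instead of a singleton.}
\begin{itemize}
\item Fix $a\neq\epsilon$, which is possible once $n\ge 2$.
\item Exclude from the nullset the at most $n$ hexagons $\overline{(1,a,z)}$, $z\in G$. Then $1\boxplus a=\varnothing$.
\item No hyperfield permits an empty sum (\Cref{rmk:nonempty}). Stated directly as a failure of associativity: $(a\boxplus 1)\boxplus\epsilon=\varnothing$, whereas $a\boxplus(1\boxplus\epsilon)\supseteq a\boxplus 0=\{a\}$.
\item All other orbits stay free and failure is genuinely forced. So for each $\epsilon$, at least $2^{N-n}$ of the $2^N$ triple systems are not hyperfields, a proportion of at least $2^{-n}=e^{-O(n)}$.
\end{itemize}
This is exactly \Cref{prop:nonhyper} in the paper. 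With it, your Step 4 goes through as written, because $\#\Aut G\cdot e^{-\Omega(n^2)}$ is negligible compared with $2^{-n}$.
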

    \begin{Cor}\label[Cor]{cor:asymptotic}
        Let $\mathcal{H}_n$ be the set of isomorphism classes of hyperfields of order $n$. Then 
        \[
            \log_2 \#\mathcal{H}_n = n^2/6 + O(n)\,.
        \]
    \end{Cor}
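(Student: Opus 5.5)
The corollary follows from \Cref{thm:count} by summing over the abelian groups $G$ of order $n$. Since every hyperfield has some abelian group as its underlying multiplicative group, we have
\[
\#\mathcal{H}_n = \sum_{G} \#\mathcal{H}(G),
\]
where $G$ runs over the isomorphism classes of abelian groups of order $n-1$. (The hyperfield has $n$ elements including $0$, and $\#\mathbb{H}^\times = n-1$.) Write $m = n-1$.

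\emph{Upper bound.} By \Cref{thm:count}, for $m$ large each term satisfies
\[
\#\mathcal{H}(G) \le \#G[2]\cdot 2^{\frac16(m^2+3m+2\#G[3])} \le m\cdot 2^{\frac16(m^2+5m)},
\]
using $\#\Aut G\ge 1$ and $\#G[2],\#G[3]\le m$. The number of abelian groups of order $m$ is $\prod_p P(e_p)$, where $P$ is the partition function and $m=\prod_p p^{e_p}$. This is at most $2^{O(\log m)}$, and in any case it is crudely bounded by $m$. Summing therefore gives $\log_2\#\mathcal{H}_n\le m^2/6+O(m)$, and since $m^2/6 = n^2/6 + O(n)$, this is $n^2/6+O(n)$.

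\emph{Lower bound.} Take $G=\mathbb{Z}/m$ alone. Then $\#\Aut G=\varphi(m)\le m$ and $\#G[2]\ge1$, so \Cref{thm:count} gives
\[
\#\mathcal{H}_n\ge \#\mathcal{H}(\mathbb{Z}/m)\ge \tfrac12\cdot m^{-1}\cdot 2^{\frac16(m^2+3m)}
\]
for large $m$. Hence $\log_2\#\mathcal{H}_n\ge m^2/6 - O(\log m)$.

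The uniformity of \Cref{thm:count} is the one point needing care: the threshold ``sufficiently large'' and the constant in $1-e^{-\Theta(\#G)}$ must be uniform in $G$ of a given order. The theorem is stated with implied constants depending only on $\#G$ (as Knuth-style asymptotics in $\#G$), so I would take that uniformity as part of the statement. If it were not uniform, I would instead use directly the upper estimate mentioned in the footnote, where the displayed expression is an overestimate, together with the lower bound from the single cyclic group. Either way, only crude bounds on $\#\Aut G$, $\#G[2]$, $\#G[3]$ and on the number of groups are needed, since all of these are at most polynomial in $m$ and so contribute only $O(\log m)\subseteq O(n)$ to $\log_2\#\mathcal{H}_n$.
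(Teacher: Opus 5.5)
Your proposal is correct and follows essentially the paper's route: apply \Cref{thm:count} to each abelian group of order $n-1$, absorb the factors $\#G[2]$, $\#G[3]$, $\#\Aut G$ and the number of such groups into an $O(n)$ error term, and sum over $G$. The differences are cosmetic. You bound the number of abelian groups by the elementary estimate $\prod_p P(e_p)\le m$ rather than by Neumann's \Cref{thm:numgroups}, and you take the lower bound from the cyclic group alone. Your upper bound also holds for every $G$ simply by counting pastures via \Cref{prop:count}, which sidesteps your uniformity worry.
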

    The most important ingredient in the proof of these results is the simple but useful notion of \emph{hexagons} of \emph{fundamental pairs} in a hyperfield.
    These notions first arose in Semple's thesis \cite{semple}.
    They were made more explicit in the later work of Pendavingh and Van Zwam \cite{pendavingh10} in the setting of partial fields. Baker and Lorscheid \cite{baker25} applied these ideas in the more general setting of pastures---of which hyperfields may be seen as a special case---and also coined the evocative term ``hexagon''.
    Their framework is essentially equivalent to the notion of \emph{blocks} employed by Hobby.
    We revert to the earlier Baker--Lorscheid nomenclature, with subtle changes in meaning (see \Cref{rmk:deviations}).

    \subsection*{Overview of the paper.}
    In \cref{sec:notation}, we define hyperfields, hexagons and fundamental pairs and explain the relationships between them. In \cref{sec:lottery}, we lay out the probabilistic framework with which we prove the main results. We then use it to prove the basic properties of random finite hyperfields.
    In \Cref{sec:results}, we use the previous work to deduce the results announced above. We conclude in \Cref{sec:questions} by commenting on the possibility of extending our results to the setting of finite skew hyperfields.
    In \Cref{sec:appendix}, we demonstrate the intimate connection between the $0/0$ property and product construction for hyperfields.
    In \Cref{sec:skew}, we prove that finite hyperfields predominate among finite skew hyperfields.
\section{Hyperfields, pastures and hexagons}\label{sec:notation}
In this section we discuss the necessary background material for our results. It will be convenient at first to work in the slightly more general setting of skew hyperfields.
For this and other reasons, we will always write groups---abelian or not---multiplicatively and denote by $1$ their identity elements. 
\subsection*{A word about multivalued binary operators} Given a set $S$, a \emph{multivalued} binary operator is a function $\star: S \times S \to \mathcal{P}(S)$ where $\mathcal{P}(S)$ is the power set of $S$. 
Following the usual conventions, given $s, t \in S$ we write 
$s \star t$ for $\star(s, t)$.
Given such a binary operator, we may naturally extend it to a binary operator on $\mathcal{P}(S)$ via
\[
    A \star B = \bigcup_{\substack{a \in A\\b \in B}} a \star b
\]
For $s \in S$ and $A \subseteq S$, we also write $s \star A$ for $\{s\} \star A$ and $A \star s$ for $A \star \{s\}$.
We say that $\star$ is \emph{commutative} if $s \star t = t \star s$ for all $s,t \in S$. This holds if and only if the single-valued operator $\star : \mathcal{P}(S) \times \mathcal{P}(S) \to \mathcal{P}(S)$ is commutative.
Similarly, we say that $\star$ is \emph{associative} if $(s \star t) \star u = s \star (t \star u)$ for all $s, t, u \in S$. Again, this holds if and only if $\star : \mathcal{P}(S) \times \mathcal{P}(S) \to \mathcal{P}(S)$ is associative. As is the case for single-valued associative operations, if $\star$ is associative we may write expressions such as 
$x_1 \star x_2 \star \dots \star x_n$ without ambiguity---where now each $x_i$ is either an element or a subset of $S$.\footnote{As is the usual practice in all areas of mathematics---with the exception of set theory---we assume implicitly that the set $S$ under consideration is disjoint from its powerset, so no ambiguity arises.}
\begin{Df}\label[Df]{df:hyperfield}
    A \emph{skew hyperfield} is a sextuple $\mathbb{H} = (G, \cdot, \boxplus, 0, 1, -1)$ in which
    \begin{itemize}
        \item $(G, \cdot, 1)$ is a group, called the \emph{underlying group} of $\mathbb{H}$. We write $\mathbb{H}^{\times}$ for $G$.
        \item $-1 \in G$ is an element in the center of $G$ satisfying $(-1)^2 = 1$, called the \emph{unit} of $\mathbb{H}$.
        \item $0$ is a formally-adjoined symbol not already contained in $G$. We call $0$ the \emph{zero} of $\mathbb{H}$. We identify $\mathbb{H}$ with the set $G \sqcup \{0\}$. We formally extend the multiplication rule on $\mathbb{H}^{\times}$ by setting 
        $0 \cdot 0 = 0\cdot g = g \cdot 0 = 0$ for all $g \in G$.\footnote{The reader will observe that multiplication is then an associative binary operator on $\Hb$ with identity element $1$ for which the invertible elements are precisely those in $\Hb^{\times}$. It is commutative if $G$ is abelian.}
        \item $\boxplus: \mathbb{H} \times \mathbb{H} \to \mathcal{P}(\mathbb{H})$ is a multivalued binary operator, called \emph{addition}.
    \end{itemize}
    such that these elements together satisfy
    \begin{enumerate}[(I)]
        \item The operator $\boxplus$ is associative and commutative.
        \item For any $g, h \in \mathbb{H}$, we have 
        $0 \in g \boxplus h$ if and only if $g = -1 \cdot  h$.
        \item For all $g, h, k \in \mathbb{H}$, we have $g(h \boxplus k) = gh \boxplus gk$ and 
        $(h \boxplus k)g = hg \boxplus kg$.
    \end{enumerate}
    We say that $\mathbb{H}$ is a \emph{hyperfield} if $G$ is abelian.
\end{Df}
\begin{Rmk}
    We write $-g$ for $g \in \mathbb{H}$ to mean $-1 \cdot g$. Note that $-(-g) = g$, as expected.
\end{Rmk}
\begin{Rmk}\label[Rmk]{rmk:zero}
    It is usually added as an axiom that $g \boxplus 0 = 0 \boxplus g = \{g\}$ for all $g \in \mathbb{H}$. In our setup, this follows from the commutativity of $\boxplus$ and the equivalences
    \[
    x \in g \boxplus 0 \;\overset{\text{(II)}} {\Leftrightarrow}\;
    0 \in - x \boxplus (g \boxplus 0)
    \;\overset{\text{(I)}} {\Leftrightarrow}\;
    0 \in (- x \boxplus g) \boxplus 0
    \;\overset{\text{(II)}} {\Leftrightarrow}\;
   - x \boxplus g \ni -1 \cdot 0 = 0
   \;\overset{\text{(II)}} {\Leftrightarrow}\;
   - g = - x
   \; {\Leftrightarrow}\;
   g = x
    \]
    Similarly, the rule that
    $g \in h \boxplus k$ if and only if 
    $h \in g \boxplus - k$ for 
    $g,h,k \in \mathbb{H}$ follows from
    \[
        g \in h \boxplus k 
        \;\overset{\text{(II)}} {\Leftrightarrow}\;
        0 \in - g \boxplus (h \boxplus k)
         \;\overset{\text{(I)}} {\Leftrightarrow}\;
          0 \in h \boxplus (- g \boxplus k)
        \;\overset{\text{(III)}} {\Leftrightarrow}\;
        0 \in - h \boxplus (g \boxplus - k)
        \;\overset{\text{(II)}} {\Leftrightarrow}\;
        h \in g \boxplus - k
    \]
\end{Rmk}
\begin{Rmk}\label[Rmk]{rmk:nonempty}
    Although we allowed $\boxplus$ to spit out any subset of $\mathbb{H}$, in fact we can never have $g \boxplus h = \varnothing$. For then by applying (II) and then (I) we would get
    \[
        \{h\} = 0 \boxplus h \subseteq (- g \boxplus g) \boxplus h = - g \boxplus (g \boxplus h) = -g \boxplus \varnothing = \varnothing
    \]
\end{Rmk}
\begin{Rmk}\label[Rmk]{rmk:mindata}
    Because of \Cref{rmk:zero}, when specifying a skew hyperfield, it is not necessary to specify addition with 0. Similarly, because of rule (III), it is only necessary to specify 
    $1 \boxplus g$ for all $g \in \mathbb{H}^{\times}$. The value of 
    $g \boxplus h$ for $g,h \in \mathbb{H}^{\times}$ is deduced via 
    $g \boxplus h = g(1 \boxplus g^{-1}h)$.
    Finally, the value of $-1$ can be deduced from the fact that 
    $0 \in 1 \boxplus g$ if and only if $g = -1$.
\end{Rmk}
\begin{Ex}\label[Ex]{ex:field=hyper}
    Any skew field $\mathbb{F}$ gives rise to a canonical skew hyperfield with underlying group $\mathbb{F}^{\times}$ and addition defined by
    $a \boxplus b \deq \{a+b\}$.
    It is a hyperfield if and only if $\mathbb{F}$ is a field.
    The skew hyperfields $\mathbb{F}$ that arise this way are precisely the ones in which $\boxplus$ is single-valued. That is,
    $\#(a \boxplus b) = 1$ for all $a, b \in \mathbb{F}$.
\end{Ex}
\begin{Rmk}\label[Rmk]{rmk:check1-1}
    To check that $\boxplus$ is single-valued in a (skew) hyperfield, it in fact suffices to verify that $1 \boxplus -1 = \{0\}$. Indeed, if this is the case, then for any $a,b \in \mathbb{F}$ and any $x,y \in a \boxplus b$
    \[ 
        x \boxplus - y \subseteq (a \boxplus b) \boxplus -(a \boxplus b) 
        = (a \boxplus - a) \boxplus (b \boxplus - b)
        = a(1 \boxplus -1) \boxplus 
        b(1\boxplus -1)
        = \{0\} \boxplus \{0\} = \{0\}
    \]
    This implies $x = y$ by \Cref{rmk:nonempty}
    and axiom (II).
\end{Rmk}
\begin{Ex}\label[Ex]{ex:Krasner}
    If we take $G$ to be the trivial group and set $1 \boxplus 1 = \{0, 1\}$, the resulting hyperfield is called the \emph{Krasner hyperfield} and is denoted $\mathbb{K}$.
\end{Ex}
\begin{Ex}\label[Ex]{ex:sign}
    If we take $G = \{1, -1\}$---the cyclic group of order 2---and define
    $1 \boxplus 1 = \{1\}$, 
    $1 \boxplus -1 = \{1, 0, -1\}$, the resulting hyperfield is called the \emph{sign hyperfield} and is denoted $\mathbb{S}$.
\end{Ex}
\begin{Ex}
    If we take $G = \mathbb{R}_{>0}$, identify $G \sqcup \{0\}$ with $\mathbb{R}_{\geq 0}$, and define addition by
    \[
        x \boxplus y \deq \begin{cases}
            \{\max(x, y)\} & \text{if $x \neq y$}\\
            [0, x] & \text{if $x = y$}
        \end{cases}
    \]
    then the resulting hyperfield is called the \emph{tropical hyperfield} and is denoted $\mathbb{T}_0$.
\end{Ex}
\begin{Ex}\label[Ex]{ex:triangle}
    If we take $G = \R_{>0}$, identify $G \sqcup \{0\}$ with $\R_{\geq0}$, and define
    \[
        x \boxplus y = \{z \in \R_{\geq 0} \mid \text{$x,y,z$ form the sidelengths of a possibly degenerate triangle}\}
    \]
    then the resulting hyperfield is called the \emph{triangular hyperfield} and is denoted 
    $\mathbb{T}_1$.
\end{Ex}
\begin{Df}
    A homomorphism between skew hyperfields $\mathbb{H}$ and $\mathbb{G}$ is a group homomorphism $f: \mathbb{H}^{\times} \to \mathbb{G}^{\times}$ which---when formally extended by setting $f(0) = 0$---satisfies
    \[
        f(g \boxplus h) \subseteq f(g) \boxplus f(h)
    \]
    for all $g, h \in \mathbb{H}^{\times}$. This gives rise to a category of skew hyperfields. The procedure outlined in \Cref{ex:field=hyper} to reinterpret skew fields as skew hyperfields induces a fully faithful embedding of the category of skew fields into the category of skew hyperfields. Clearly, the subcategory of fields lands inside the subcategory of hyperfields.
\end{Df}
\begin{Rmk} 
    If $f: \mathbb{H} \to \mathbb{G}$ is a skew hyperfield homomorphism,
    then with the convention $f(0) = 0$ we have
    $f(gh) = f(g)f(h)$
    for all $g, h \in \mathbb{H}$.
    Also, $f(-1) = -1$.
\end{Rmk}
Skew hyperfields are of interest primarily for their interaction with skew fields. For instance, a homomorphism $\mathbb{F} \to \mathbb{S}$ from a skew field $\mathbb{F}$ is the same data as a total ordering of $\mathbb{F}$, a homomorphism 
$\mathbb{F} \to \mathbb{T}_0$ is the same as the data of a (real-valued) valuation on $\mathbb{F}$,
and a homomorphism 
$\mathbb{F} \to \mathbb{T}_1$ is the same as the data of an absolute value on $\mathbb{F}$. A more direct interaction between skew fields and skew hyperfields is given by the following construction. 
\begin{Df}
    Let $\mathbb{F}$ be a skew field and let $\Gamma \subseteq \mathbb{F}^{\times}$ be a normal subgroup of its multiplicative group. The group
    $\mathbb{F}^{\times}\!/\Gamma$ comes equipped with a canonical skew hyperfield structure (see \cite{Krasner1983}). Namely, for 
    $\overline{a}, \overline{b} \in \mathbb{F}^{\times}\!/\Gamma \cup \{0\}$, we set
    \[
        \bar a \boxplus \bar b = \left\{\overline{\alpha + \beta} \mid \alpha, \beta \in \mathbb{F}, \bar{\alpha} = \bar a, \bar{\beta} = \bar b\right\}
    \]
    where $\bar x$ denotes the coset 
    $x \Gamma$ if $x \in \mathbb{F}^{\times}$ and $\bar 0 = 0$.
    The resulting hyperfield is denoted $\mathbb{F}\!/\Gamma$ and comes equipped with a canonical \emph{quotient homomorphism}
    $\mathbb{F} \to \mathbb{F}\!/\Gamma$.
    A skew hyperfield is called \emph{quotient} if it is (isomorphic to a skew hyperfield) of this form.
\end{Df}
\begin{Rmk}
    The quotient construction can be defined more generally whenever $\mathbb{F}$ is a skew \emph{hyperfield}. However, we will not have need for this level of generality.
\end{Rmk}
\begin{Ex}
    The sign hyperfield $\mathbb{S}$ of \Cref{ex:sign} is the quotient $\mathbb{R}/\mathbb{R}_{>0}$.
\end{Ex}
\begin{Ex}\label[Ex]{ex:Kquot}
    The Krasner hyperfield $\mathbb{K}$ of \Cref{ex:Krasner} is the quotient $\mathbb{F}\!/\mathbb{F}^{\times}$ for any skew field $\mathbb{F}$ of cardinality at least 3.
\end{Ex}
\begin{Ex}\label[Ex]{ex:trianglequot}
    The triangular hyperfield 
    $\mathbb{T}_1$ of \Cref{ex:triangle} is the quotient 
    $\mathbb{C}/S^1$ where $S^1$ is the group of complex numbers of modulus 1.
\end{Ex}
\begin{Rmk}\label[Rmk]{rmk:commutativeskewquotient}
    Every quotient of a field is a hyperfield. However, a non-commutative skew-field may have a hyperfield quotient. For instance, if $\mathbb{H}$ is the real quaternion algebra then 
    $\mathbb{H}/[\mathbb{H}^{\times}, \mathbb{H}^{\times}]$ is a hyperfield isomorphic to
    $\mathbb{T}_1$.
    By \Cref{ex:trianglequot}, $\mathbb{T}_1$ is also a quotient of $\C$.
    The present authors were unable to construct an example of a hyperfield which is a quotient of some \emph{non-commutative} skew field, but not of any field (c.f.\ \Cref{ques:skew}).
\end{Rmk}
We now begin our analysis of hyperfields specifically.
We will describe them in terms of the triples $(a, b, c)$ such that 
$a \bp b \bp c \ni 0$. We begin with the following definition.
\begin{Df}\label[Df]{df:hexagon}
    Let $G$ be an abelian group. There are natural actions of $G$ and $S_3$ on the set $G^3$, the former by multiplication and the latter by permutation of the coordinates.
    A \emph{fundamental pair} of $G$ is a $G$-orbit of $G^3$. We identify the orbit of the element $(x, y, z) \in G^3$ with the pair 
    $(xz^{-1}, yz^{-1})$---hence the name.
    This gives an identification of the set of fundamental pairs with $G^2$.
    
    A \emph{hexagon} of $G$ is an 
    $(S_3 \times G)$-orbit of $G^3$. 
    We write $\overline{(x,y,z)}$ for the $(S_3 \times G)$-orbit of $(x,y,z) \in G^3$.
    We identify the hexagon of $(x, y, z) \in G^3$ with the multiset of the six corresponding fundamental pairs. These are
    \[
        (xz^{-1}, yz^{-1}),\, (yz^{-1}, xz^{-1}),\, 
        (xy^{-1}, zy^{-1}),\, (zy^{-1}, xy^{-1}),\,
        (yx^{-1}, zx^{-1}),\, (zx^{-1}, yx^{-1})
    \]
    In terms of the fundamental pair $(u, v) \deq (xz^{-1}, yz^{-1})$, these six are
    \[
        (u, v),\, (v, u),\, 
        (uv^{-1}, v^{-1}),\, (v^{-1}, uv^{-1}),\,
        (vu^{-1}, u^{-1}),\, (u^{-1}, vu^{-1})
    \]
    The group $S_3$ acts on the set of all fundamental pairs of $G$, with each orbit being a hexagon. 
    We write $\hexagon(G)$ for the set of hexagons of $G$
    and $\overline{(u, v)}$ for the hexagon containing the fundamental pair $(u, v)$.
    We may visualize these data via a hexagonal diagram as in \Cref{fig:hexagon}.
    \begin{figure}[h]
        \centering
        \def\svgwidth{0.8\textwidth}
        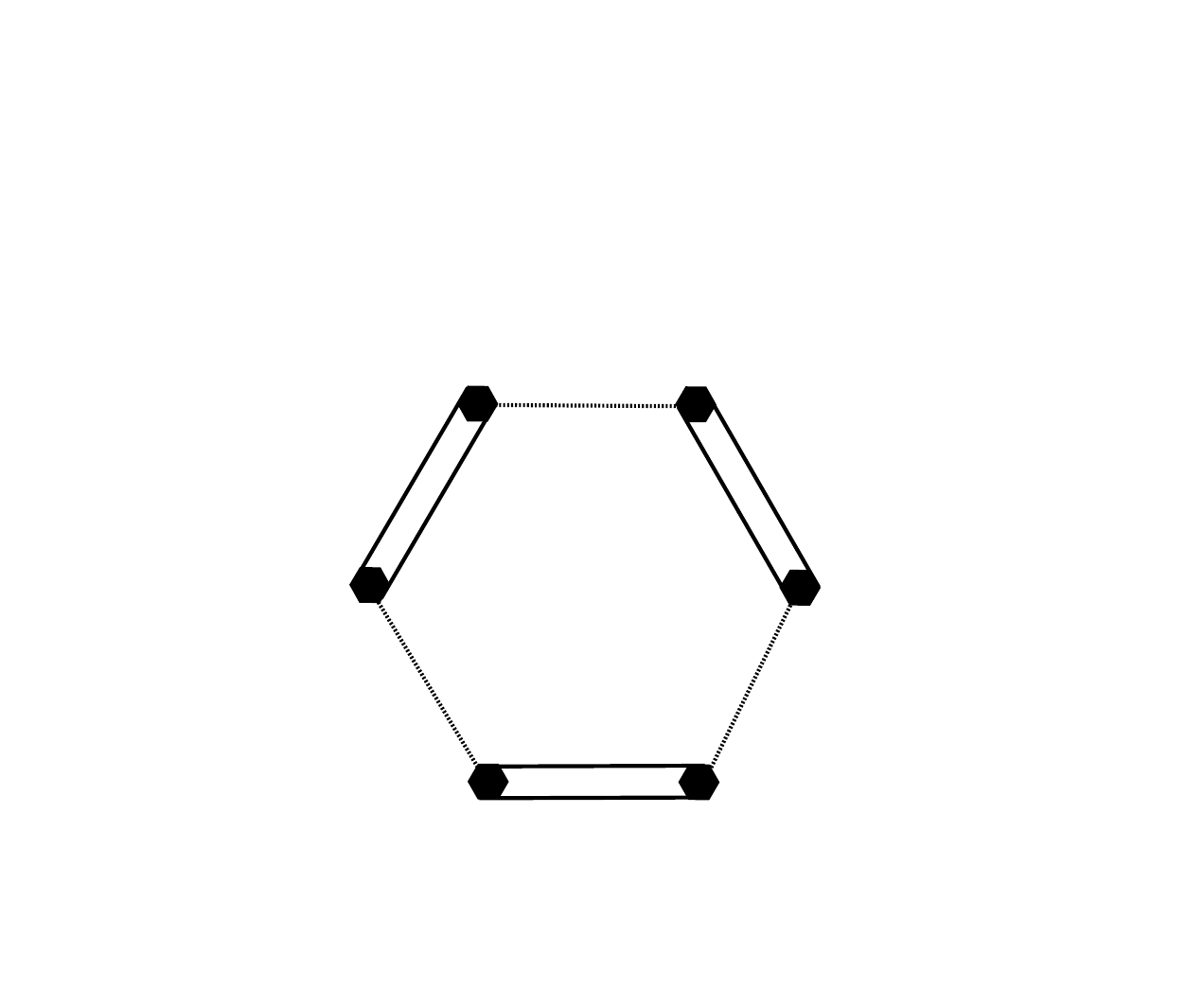
        \caption{A hexagon of fundamental pairs}
        \label{fig:hexagon}
    \end{figure}
\end{Df}
It follows from axioms (I) and (III) in \Cref{df:hyperfield} that for a hyperfield $\mathbb{H}$ the set 
\[
    \{(a,b,c) \in \mathbb{H}^{\times} \mid 0 \in a \boxplus b \boxplus c\}
\]
is invariant under both the multiplicative action of 
$(\mathbb{H}^{\times})^3$ and the permutation action of $S_3$. It is therefore natural to consider the corresponding set of hexagons in $\hexagon(\mathbb{H}^{\times})$.

It is not too difficult to see from the axioms (I)-(III) that a hyperfield is determined by the data of (i) its underlying group, (ii) its unit and (iii) its collection of hexagons---that is, the set of all $(x,y,z)\in G^3$ such that $x\boxplus y\boxplus z\ni 0$ (see \Cref{prop:hexagon_characterization}). Not every collection of such data gives rise to a hyperfield, however.
It is therefore worth giving a name to the simpler structure consisting of such data. This was done by Baker and Lorscheid in \cite{baker21}, under the banner ``pastures''.
Pastures were used to great effect in \cite{foundations} to systematize and generalize a number of results in the representation theory of matroids.
We now define pastures based on the framework above. For comparison, see \cite[Definition 2.1]{foundations}.
\begin{Df}\label[Df]{df:pasture}
    A \emph{pasture} $\mathbb{P}$ consists of a triple $\mathbb{P} = (G, -1, \Hex(\mathbb{P}))$ where 
    \begin{itemize}
        \item $G$ is an abelian group, called the \emph{underlying group} of $\mathbb{P}$ and denoted 
        $\mathbb{P}^{\times}$.
        \item $-1 \in G$ is an element satisfying $(-1)^2 = 1$, called the \emph{unit} of $\mathbb{P}$.
        \item $\Hex(\mathbb{P}) \subseteq \hexagon(G)$ is a subset of hexagons of $G$, called the \emph{(nontrivial) 
        nullset} of $\mathbb{P}$.
    \end{itemize}
\end{Df}
\begin{Rmk}\label[Rmk]{rmk:deviations}
    We deviate from the nomenclature in \cite{baker21} in two respects. Firstly, in the Baker--Lorscheid conventions, a fundamental pair of $(x,y,z)$ is represented by 
    the pair $(- xz^{-1}, - yz^{-1})$ instead of $(xz^{-1}, yz^{-1})$. Our convention has the advantage that the set of available hexagons in a group $G$ is independent of the unit $-1$. 
    Secondly, the \emph{nullset} in the sense of Baker and Lorscheid is---using our terminology---the set
    \[
        \{ 0 + 0 + 0 \} \cup \{x + \epsilon x + 0 \mid x \in G\} \cup \left\{x + y + z \;\middle|\; \text{$\overline{(x, y, z)}$ lies in $\Hex(\Pb)$}\right\}
    \]
    where $a + b + c$ is a formal symbol representing the $S_3$-orbit of $(a,b,c)$ in $(\mathbb{P}^{\times} \sqcup \{0\})^3$. Our present conventions will make subsequent arguments more salient.
\end{Rmk}
As we have already observed, to a hyperfield $\Hb$ we can associate the pasture consisting of the underlying group $G= \Hb^\times$,  the unit $-1$ of $\mathbb{H}$, and the hexagons of $\Hb$. 
Below we give a characterization of those pastures which give rise to hyperfields.
\begin{Prop}\label[Prop]{prop:hexagon_characterization}
    The pasture associated to a hyperfield uniquely determines it.
    A pasture $\Pb$ on a group $G$ arises from a hyperfield if and only if the following two conditions hold:
    \begin{enumerate}[(A)]
        \item For all $x \in G$ with 
        $x \neq -1$, there exists $y \in G$
        such that $\overline{(x,y)} \in \Hex(\mathbb{P})$.
        \item For all $x,y,z,w \in G$, if 
        $\overline{(x, y)} \in \Hex(\Pb)$, 
        $\overline{(z, w)} \in \Hex(\Pb)$, and $(x, y) \neq (z, w)$ then there exists $t \in G$ such that $\overline{(tx, -tz)} \in \Hex(\Pb)$
        and $\overline{(-ty, tw)} \in \Hex(\Pb).$
    \end{enumerate}
\end{Prop}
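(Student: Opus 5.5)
The plan is to translate between hyperfield addition and hexagons using a single dictionary. For $u,v\in G$, the fundamental pair $(u,v)$ is the $G$-orbit of $(u,v,1)$. By \Cref{rmk:zero} and axiom (II), $0\in u\boxplus v\boxplus 1$ holds if and only if $-1\in u\boxplus v$. Hence, for $x\in G$ and $z\in G$,
\[
z\in 1\boxplus x \iff 0\in 1\boxplus x\boxplus -z \iff \overline{(x,-z)}\in \Hex(\mathbb{H}),
\]
while $0\in 1\boxplus x$ if and only if $x=-1$. Since by \Cref{rmk:mindata} a hyperfield is determined by $G$, $-1$ and the sets $1\boxplus x$, this gives the uniqueness assertion at once.

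For necessity, (A) is just \Cref{rmk:nonempty}: when $x\neq -1$ the set $1\boxplus x$ is nonempty and does not contain $0$, so it contains some $z\in G$. For (B), suppose $(x,y)\neq(z,w)$ are pairs with $-1\in x\boxplus y$ and $-1\in z\boxplus w$. Then $0\in (x\boxplus y)\boxplus(-z\boxplus -w)$. By associativity and commutativity, $0\in (x\boxplus -z)\boxplus(y\boxplus -w)$, so there is $s\in x\boxplus -z$ with $-s\in y\boxplus -w$. If $s=0$, then axiom (II) forces $x=z$ and $y=w$, which is excluded. So $s\in G$, and we set $t=-s^{-1}$. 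Unwinding the dictionary, $s=-t^{-1}\in x\boxplus -z$ becomes $\overline{(tx,-tz)}\in\Hex$, and $-s\in y\boxplus -w$ becomes $\overline{(-ty,tw)}\in\Hex$.

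For sufficiency, I would define $1\boxplus x$ by the displayed rule, adding $0$ exactly when $x=-1$. I then set $0\boxplus g=\{g\}$ and $g\boxplus h=g(1\boxplus g^{-1}h)$, so that distributivity (III) holds by construction and (II) holds by design. Commutativity and the reversibility rule $g\in h\boxplus k\iff h\in g\boxplus -k$ come from the $S_3$-invariance of hexagons: permuting the triple $(x,-z,1)$ corresponds to these symmetries. Condition (A) ensures that every sum is nonempty.

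The real work, and the step I expect to be the main obstacle, is associativity. I would first show that, given commutativity, reversibility and nonemptiness, associativity is equivalent to the exchange property
\[
0\in(a\boxplus b)\boxplus(c\boxplus d)\;\Longrightarrow\; 0\in(a\boxplus c)\boxplus(b\boxplus d).
\]
Indeed, $u\in(a\boxplus b)\boxplus c$ iff $0\in(a\boxplus b)\boxplus(c\boxplus -u)$, and applying exchange twice moves this to $0\in(b\boxplus c)\boxplus(a\boxplus -u)$, i.e.\ $u\in a\boxplus(b\boxplus c)$. To prove exchange, I would scale so that the intermediate witness is $-1$, which gives $-1\in a\boxplus b$ and $1\in c\boxplus d$. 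Writing $(a,b)=(x,y)$ and $(c,d)=(-z,-w)$, condition (B) produces the required witness $-t^{-1}$ whenever $(x,y)\neq(z,w)$. The remaining degenerate cases must be handled separately: those where the witness or some of $a,b,c,d$ is $0$ (reduced via $0\boxplus g=\{g\}$), and the case $(x,y)=(z,w)$. In that last case the witness $0$ works, because $x\boxplus -x\ni 0$ and $y\boxplus -y\ni 0$. This case bookkeeping is routine but is where errors are most likely.
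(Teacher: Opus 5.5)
Your proposal is correct and follows essentially the paper's proof. It uses the same dictionary $z\in 1\boxplus x\iff\overline{(x,-z)}\in\Hex(\mathbb{P})$, the same reduction of associativity to the exchange property, and the same split into a zero witness versus a nonzero witness rescaled so that (B) supplies the new witness $-t^{-1}$, with $(x,y)=(z,w)$ trivial. The only organizational difference is that the paper gets both directions at once by showing the exchange property is equivalent to (A) and (B), while you prove necessity directly from the hyperfield axioms.

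One small correction: the zero-witness case $b=-a$, $d=-c$ is not handled by $0\boxplus g=\{g\}$. It needs $0\in(a\boxplus c)\boxplus -(a\boxplus c)$, i.e.\ $a\boxplus c\neq\varnothing$, and this is exactly where (A) enters the sufficiency argument.
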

\begin{proof}  
    Let $\widehat G \deq G \sqcup \{0\}$.
    If $\mathbb{P}$ is the pasture of some hyperfield, then the addition operator $\boxplus$ can be recovered via the rule
    \[
        x \boxplus y \deq \begin{cases}
            \{y\} & \text{if $x = 0$}\\
            \{x\} & \text{if $y = 0$}\\
            \left\{z \in G \;\middle|\;\overline{(x,y,-z)} \in \Hex(\mathbb{P})\right\} & \text{if $x, y \in G$, $x \neq - y$}\\
            \left\{z \in G \;\middle|\;\overline{(x,y,-z)} \in \Hex(\mathbb{P})\right\} \cup \{0\} & \text{if $x, y \in G$, $x = - y$}\\
        \end{cases}
    \]
    On the other hand---for any pasture $\mathbb{P}$ on $G$---if we 
    \emph{define} a binary operator
    $\boxplus: \widehat G \times \widehat G \to \mathcal{P}(\widehat G)$ by this rule, then $\boxplus$ is a well-defined, commutative, multivalued binary operator. It satisfies axiom (II) of \Cref{df:hyperfield} by construction. We verify that it satisfies axiom (III). Note that
    $g(h \boxplus k) = gh \boxplus gk$ holds trivially if $g = 0$ and follows from $x \boxplus 0 = 0 \boxplus x = \{x\}$ if $h = 0$ or $k = 0$. We may therefore assume $g,h,k \in G$. Then, by construction, for $z \in G$
    \begin{align*}
        z \in gh \boxplus gk 
         \;&\Leftrightarrow\;
         \overline{(gh, gk, -z)} \in \Hex(\mathbb{P})\\
        \;&\Leftrightarrow\;
        \overline{(h, k, -g^{-1}z)} \in \Hex(\mathbb{P})
        \\&\Leftrightarrow\;
        g^{-1}z \in h \boxplus k
        \;\Leftrightarrow\;
        z \in g(h \boxplus k)
    \end{align*}
    Since $G$ is abelian, this suffices to establish (III). It remains now to determine the associativity of $\boxplus$. Note that by the commutativity of $\boxplus$, associativity is reduced to the claim
    \[
        \text{for all $g, h, k \in G$,} \quad (g \boxplus h) \boxplus k \subseteq g \boxplus (h \boxplus k)
    \]
    We show that this holds if and only if $\mathbb{P}$ satisfies conditions (A) and (B). The identity $(g \boxplus h) \boxplus k = g \boxplus (h \boxplus k)$ holds trivially if any of $g,h,k$ is $0$. Suppose 
    $g,h,k \neq 0$. By the validity of (II) we have
    \[
        0 \in (g \boxplus h) \boxplus k
        \;\Leftrightarrow\;
         - k \in g \boxplus h
        \;\Leftrightarrow\;
        \overline{(g, h, k)} \in \Hex(\mathbb{P})
        \;\Leftrightarrow\;
        - g \in h \boxplus k
        \;\Leftrightarrow\;
        0 \in g \boxplus (h \boxplus k)
    \]
    From this case it follows that for $z \in G$ we have
    \[
    z \in (g \boxplus h) \boxplus k 
    \Leftrightarrow\; 0 \in ((g \boxplus h) \boxplus k) \boxplus -z\;\Leftrightarrow\;
    0 \in (g \boxplus h) \boxplus (k \boxplus -z)
    \]
    Similarly, $z \in g \boxplus (h \boxplus k)$ if and only if 
    $0 \in (- z \boxplus g) \boxplus (h \boxplus k)$.
    Setting $\ell = -z$, the associativity condition reads
    \begin{equation}\label{eq:assoc}
        \text{for all $g, h, k, \ell \in G$, \quad if}
        \quad 
        0 \in (g \boxplus h) \boxplus (k \boxplus \ell) \quad\text{then}\quad
        0 \in (g \boxplus \ell) \boxplus (h \boxplus k)
    \end{equation}
    Suppose $0 \in (g \boxplus h) \boxplus (k \boxplus \ell)$ holds. This means precisely that one of the following is true.
    \begin{itemize}
        \item $0 \in g \boxplus h$ and $0 \in k \boxplus \ell$.
        \item $s \in g \boxplus h$ and 
        $- s \in k \boxplus \ell$ for some $s \in G$.
    \end{itemize}
    We show that the validity of (\ref{eq:assoc}) in the first case is precisely condition (A) and its validity in the second case is precisely condition (B).

    In the first case, $h = - g$
    and $k = - \ell$. The desired conclusion is
    \[
        0 \in (g \boxplus \ell) \boxplus (- g \boxplus - \ell)
        = (g \boxplus \ell) \boxplus - (g \boxplus \ell)
    \]
    This holds if and only if $g \boxplus \ell \neq \varnothing$.
    This is certainly true if $g = - \ell$. Otherwise, it is equivalent to the existence of $q \in G$ such that $\overline{(g, \ell, q)} \in \Hex(\mathbb{P})$, i.e.\ $\overline{(g\ell^{-1}, q\ell^{-1})} \in \Hex(\mathbb{P})$.
    Since $g$ and $\ell$ are arbitrary, this reduces precisely to condition (A).

    In the second case, the hypothesis on $g,h,k,\ell$ is
    that there exists $s \in G$ such that $\overline{(g, h, s)} \in \Hex(\mathbb{P})$
    and $\overline{(k, \ell, -s)} \in \Hex(\mathbb{P})$. Because of axiom (III), the condition in question is invariant under the multiplicative action of $G$. Thus, after multiplying $g,h,k,\ell$ by $s^{-1}$, the hypothesis becomes 
    $\overline{(g, h, 1)}, \overline{(k, \ell, -1)} \in \Hex(\mathbb{P})$. That is,
    $\overline{(g, h)}, \overline{(-k, -\ell)} \in \Hex(\mathbb{P})$.

    If $g = -\ell$ and $h = -k$, then the conclusion of (\ref{eq:assoc}) holds trivially.
    If $g \neq -\ell$ or $h \neq -k$, the conclusion of (\ref{eq:assoc}) is the condition that there exists $r \in G$ such that $\overline{(g, \ell, r)} \in \Hex(\mathbb{P})$
    and $\overline{(h, k, -r)} \in \Hex(\mathbb{P})$.
    We rewrite this as
    \[
        \overline{(r^{-1} g, r^{-1}\ell)} \in \Hex(\mathbb{P})
        \quad \text{and} \quad
        \overline{(-r^{-1}h, -r^{-1}k)} \in \Hex(\mathbb{P})
    \]
    Using the correspondence $x \leftrightsquigarrow g$, $y \leftrightsquigarrow h$, 
    $z \leftrightsquigarrow -\ell$, 
    $w \leftrightsquigarrow -k$, $t \leftrightsquigarrow r^{-1}$, we recover precisely condition (B).
\end{proof}
\begin{Rmk}\label[Rmk]{rmk:norestriction}
    The restriction $x\neq -1$ in condition (A) can be removed precisely when the hyperfield is not a field. This is a restatement of \Cref{rmk:check1-1}.
\end{Rmk}
The following result uses the notions of \emph{4-full} 
and \emph{0/0} hyperfields as defined in \Cref{sec:intro}
(Definitions \ref{df:4-full} and \ref{df:0/0}).
\begin{Prop}\label[Prop]{prop:4fullpasture}
    A pasture $\Pb$ gives rise to a 4-full, 0/0 hyperfield if and only if
    \begin{equation}\tag{$\star$}\label{eq:starstar}
    \text{For all $x, y, z, w \in G$, there exists $t \in G$ such that both
            $\overline{(tx, ty)},\overline{(tz, tw)}\in \Hex(\Pb)$.}
    \end{equation}
\end{Prop}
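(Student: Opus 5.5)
The plan is to rewrite everything in terms of the addition of a hyperfield, so that both sides of the equivalence become statements about intersections of sums of two elements.

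\textbf{Translating $(\star)$.} By \Cref{df:hexagon}, $\overline{(a,b)}\in\Hex(\Pb)$ means that the orbit of $(a,b,1)$ is a hexagon of $\Pb$. When $\Pb$ comes from a hyperfield, this says $0\in a\boxplus b\boxplus 1$, i.e.\ $-1\in a\boxplus b$. Scaling by $t$ via axiom (III), the two conditions $\overline{(tx,ty)},\overline{(tz,tw)}\in\Hex(\Pb)$ together say $-t^{-1}\in(x\boxplus y)\cap(z\boxplus w)$. Thus, once we know $\Pb$ comes from a hyperfield, $(\star)$ is equivalent to
\[
(\star\star)\qquad (x\boxplus y)\cap(z\boxplus w)\ \text{contains a nonzero element for all } x,y,z,w\in\Hb^\times .
\]

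\textbf{($\Rightarrow$ direction, and the hyperfield structure).} I first check that $(\star)$ forces $\Pb$ to come from a hyperfield, using \Cref{prop:hexagon_characterization}.

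For (A), given $x$, apply $(\star)$ to $(x,1,x,1)$. This gives $t$ with the orbit of $(tx,t,1)$ a hexagon. Rescaling by $t^{-1}$ and permuting, this is the orbit of $(x,t^{-1},1)$, so $\overline{(x,t^{-1})}\in\Hex(\Pb)$.

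For (B), apply $(\star)$ to the quadruple $(x,-z,-y,w)$. This yields exactly the required $t$, with $\overline{(tx,-tz)}$ and $\overline{(-ty,tw)}$ both in $\Hex(\Pb)$.

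Now let $\Hb$ be the resulting hyperfield, which satisfies $(\star\star)$.
\begin{itemize}
\item \emph{4-full.} Since $0\in a\boxplus b\boxplus c\boxplus d$ iff $(a\boxplus b)\cap(-c\boxplus -d)\neq\varnothing$, $(\star\star)$ gives this directly.
\item \emph{$\Hb\neq\F_2$.} In $\F_2$, $(\star)$ fails for $x=y=z=w=1$, since $0\notin 1+1+1$.
\item \emph{0/0.} For $x\neq 0$, apply $(\star\star)$ to $(x,-x,1,-1)$. This gives a nonzero $u\in x(1\boxplus-1)\cap(1\boxplus-1)$. Writing $u=xs=r$ with $r,s\in 1\boxplus -1$ and $s\neq 0$, we get $x=rs^{-1}$. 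For $x=0$, take $r=0$ and $s$ any nonzero element of $1\boxplus-1$, which exists by the case $x=1$.
\end{itemize}

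\textbf{($\Leftarrow$ direction).} Suppose $\Hb$ is 4-full and 0/0; it suffices to prove $(\star\star)$. By 4-fullness, $(x\boxplus y)\cap(z\boxplus w)\neq\varnothing$. If it contains a nonzero element we are done. Otherwise the intersection is $\{0\}$, and then axiom (II) forces $y=-x$ and $w=-z$.

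In that case the two sets are $x(1\boxplus-1)$ and $z(1\boxplus-1)$. The 0/0 property gives $x^{-1}z=rs^{-1}$ with $r,s\in1\boxplus-1$ and $s\neq0$. Since $x^{-1}z\neq0$, also $r\neq0$. Then $xr=zs$ is a nonzero common element, contradicting the assumption that the intersection is $\{0\}$.

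\textbf{Main obstacle.} The only delicate point is the bookkeeping in the translation from hexagons to sums: keeping track of which representative triple and which sign of $t$ correspond to $-t^{-1}\in x\boxplus y$. There is also the separate role of the exclusion of $\F_2$, which is exactly what accounts for the difference between a nonempty intersection and one containing a nonzero element. I would write out the correspondence $\overline{(a,b)}\in\Hex \Leftrightarrow -1\in a\boxplus b$ once, carefully, using the recovery formula for $\boxplus$ in the proof of \Cref{prop:hexagon_characterization}, and then reuse it throughout.
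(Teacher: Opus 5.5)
Your proof is correct and follows essentially the same route as the paper. Like the paper, you verify (A) and (B) of \Cref{prop:hexagon_characterization} from $(\star)$, translate $(\star)$ into the statement that $(x\boxplus y)\cap(z\boxplus w)$ always contains a nonzero element, and then note that 4-fullness supplies a common element that can only be forced to be $0$ when $y=-x$ and $w=-z$, which is exactly where the 0/0 property takes over. You are in fact slightly more careful than the paper: you spell out (B) via the quadruple $(x,-z,-y,w)$, and you treat explicitly both the exclusion of $\mathbb{F}_2$ and the case $x=0$ of the 0/0 property.
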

\begin{proof}
    Suppose $\Pb$ satisfies \eqref{eq:starstar}. For any $x\in G$, applying \eqref{eq:starstar} to $(1,x,1,x)$ shows there exist $t$ such that $(t, tx)\in \Hex(\Pb)$ which implies $(t^{-1},x)\in \Hex(\Pb)$. Thus $\mathbb{P}$ satisfies condition $(A)$ in \Cref{prop:hexagon_characterization}.
    Condition $(B)$ is trivially satisfied as well. So $\Pb$ gives rise to a hyperfield.
    
    Letting $s = t^{-1}$, property (\ref{eq:starstar}) can be translated into the language of hyperfield as follows: 
    \begin{itemize}
        \item[($\star\star$)] For all $x,y,z,w\in G$, there exist $s\in G$ such that $s\bp x\bp y\ni 0$ and $s\bp z\bp w\ni 0$.
    \end{itemize}
    It therefore suffices to show that a hyperfield satisfies $(\star\star)$ if and only if it is $4$-full and $0/0.$ We will show that a hyperfield is $4$-full if and only if $(\star\star)$ holds whenever $y \neq -x, z \neq -w$, and is $0/0$ if and only if $(\star\star)$ holds in the special case $y = -x, z = -w$.

    For the first half, note that $0\in x\bp y\bp z\bp w$ if and only if there exists $s\in \Hb$ such that $s\in x\bp y$ and $-s\in z\bp w.$
    If $y = -x$ and $z = -w$ this condition is trivially satisfied by taking $s = 0$. Otherwise, 
    $s$ cannot be taken to be $0$, and this condition is the same as: $-x\bp -y\bp s\ni 0$ and $s\bp w\bp z\ni 0.$ That is, it is equivalent to $(\star\star)$ for $(-x,-y,z,w).$ Since $x\mapsto -x$ is an involution on $G$, we have shown that $4$-fullness is the same as $(\star\star)$ being valid except possibly when $y = -x, z = -w.$

    We now prove the remaining claim that the $0/0$ property is the same as the validity of
    $(\star\star)$  for all quadruples $(x,-x,z,-z)$ with  $x,z\in G$. Indeed, note that $x\bp -x\bp t\ni 0$ is the same as $-1 \boxplus 1 \boxplus -x^{-1} t\ni 0$, which is the same as  $x^{-1}t\in 1\bp -1.$ Thus, letting $S = 1\bp -1$, the content of $(\star\star)$ for $(x,-x, z, -z)$ reads: there exists $t$ such that $x^{-1}t\in S$ and $z^{-1}t\in S.$ This is true if and only if $zx^{-1}$ can be expressed as $rs^{-1}$ for $r, s\in S.$ The claim therefore follows from the fact that the map $(x,z)\mapsto zx^{-1}$ from $G^2$ to $G$ is surjective.
\end{proof}
It will be useful to define morphisms between pastures. This was already done, for instance, in \cite{baker25}. We express the definition using our conventions.
\begin{Df}\label[Df]{df:pasturemorphism}
    A \emph{morphism} of pastures from 
    a pasture $\mathbb{P}_1$ to 
    a pasture $\mathbb{P}_2$ is a group homomorphism 
    ${f: \mathbb{P}_1^{\times} \to \mathbb{P}_2^{\times}}$ mapping the unit of $\mathbb{P}_1$ to the unit of $
    \mathbb{P}_2$ and such that 
    \[
        \text{if \quad $\overline{(x,y)} \in \Hex(\mathbb{P}_1)$ \quad then \quad $\overline{(f(x), f(y))} \in \Hex(\mathbb{P}_2)$}
    \]
\end{Df}
We omit the easy proof of the following proposition.
\begin{Prop}\label[Prop]{prop:fullyfaithful}
    A homomorphism of hyperfields gives rise to a morphism of the corresponding pastures. Every morphism between these pastures arises in this way. This defines a fully faithful embedding of the category of hyperfields into the category of pastures.
\end{Prop}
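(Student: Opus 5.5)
The plan is to prove the three assertions of \Cref{prop:fullyfaithful} in turn. Each one reduces to the dictionary between additions and hexagons set up in the proof of \Cref{prop:hexagon_characterization}.

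\textbf{Step 1: hyperfield homomorphisms give pasture morphisms.} Let $f:\Hb_1\to\Hb_2$ be a hyperfield homomorphism, extended by $f(0)=0$. It is multiplicative and satisfies $f(-1)=-1$, so its restriction to $\Hb_1^\times$ is a group homomorphism preserving units. Now suppose $\overline{(x,y)}\in\Hex(\Pb_1)$. By definition of the associated pasture this means $0\in x\boxplus y\boxplus 1$, i.e.\ $-1\in x\boxplus y$. Applying $f$ gives
\[
-1=f(-1)\in f(x\boxplus y)\subseteq f(x)\boxplus f(y),
\]
so $0\in f(x)\boxplus f(y)\boxplus 1$. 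Since $f(x),f(y)$ are nonzero, this says exactly $\overline{(f(x),f(y))}\in\Hex(\Pb_2)$.

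\textbf{Step 2: every pasture morphism comes from a hyperfield homomorphism.} Let $f$ be a morphism between the associated pastures, and extend it by $f(0)=0$. We must show $f(g\boxplus h)\subseteq f(g)\boxplus f(h)$ for $g,h\in\Hb_1^\times$. Take $z\in g\boxplus h$.
\begin{itemize}
\item If $z=0$, then $h=-g$. Hence $f(h)=-f(g)$, because $f$ is a homomorphism with $f(-1)=-1$, and so $0\in f(g)\boxplus f(h)$ by axiom (II).
\item If $z\neq0$, then $\overline{(g,h,-z)}\in\Hex(\Pb_1)$. Equivalently, in pair form, $\overline{(-gz^{-1},-hz^{-1})}\in\Hex(\Pb_1)$. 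The morphism condition gives $\overline{(-f(g)f(z)^{-1},-f(h)f(z)^{-1})}\in\Hex(\Pb_2)$. Using multiplicativity and $f(-1)=-1$, this is the same as $\overline{(f(g),f(h),-f(z))}\in\Hex(\Pb_2)$, i.e.\ $f(z)\in f(g)\boxplus f(h)$ by the recovery formula in \Cref{prop:hexagon_characterization}.
\end{itemize}

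\textbf{Step 3: full faithfulness.} Both kinds of morphism are determined by the same underlying group homomorphism, so the assignment is functorial. It is faithful because it does not change the underlying map, and full by Step 2. It is an embedding on objects because, by \Cref{prop:hexagon_characterization}, the pasture determines the hyperfield.

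\textbf{Main obstacle.} The only delicate point is the bookkeeping between triples and fundamental pairs, together with the signs. Under the paper's conventions, $\overline{(x,y)}$ corresponds to $\overline{(x,y,1)}$, and $z\in x\boxplus y$ with $z\neq 0$ corresponds to $\overline{(x,y,-z)}$. Passing between these requires $f(-1)=-1$ and the multiplicativity of $f$; this is the step I would check most carefully.
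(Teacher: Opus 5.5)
Your proof is correct. The paper omits this proof as easy, and your argument is exactly the routine verification it intends: you handle $z=0$ by axiom (II), translate $z\in g\boxplus h$ with $z\neq 0$ into $\overline{(g,h,-z)}\in\Hex$ via the recovery formula of \Cref{prop:hexagon_characterization}, and use multiplicativity together with $f(-1)=-1$, with the sign bookkeeping $(-z)^{-1}=-z^{-1}$ and $f(-gz^{-1})=-f(g)f(z)^{-1}$ done correctly.
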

\section{A hyperfield lottery}\label{sec:lottery}
In this section, we fix a finite abelian group $G$ of cardinality $n \deq \#G$ and a ``unit element'' $\eps\in G$ satisfying $\eps^2 =1$.
We will consider the process of constructing a random pasture on $G$ with unit $\epsilon$ by selecting a subset $\Nc\subseteq \hexagon(G)$ uniformly at random and thereby obtaining the pasture $\Pb = (G, \eps, \Nc)$.
This gives rise to a pasture-valued random variable ${\Pib}$. We call it the \emph{hyperfield lottery}, for reasons that will soon become clear.
Note that the selection of $\mathcal{N}$ may be described in the following equivalent way: each element of $\hexagon(G)$ is included in $\mathcal{N}$ with probability $\frac{1}{2}$, independently of all other hexagons. It is straightforward to translate from the probabilistic language to a counting statement: $\Pib$ satisfies a condition $X$ with probability $p$ if and only if 
the number of pastures on $(G, \epsilon)$ with property $X$ is 
$p \cdot 2^{\#\hexagon(G)}$. Nevertheless, for reasons that may already be apparent, it will be very useful to frame matters probabilistically.

Below, we shall establish a number of properties of the hyperfield lottery. The first shows that it almost always returns a hyperfield.
Specifically, we give a lower bound on the probability that $\Pib$ satisfies condition (\ref{eq:starstar}) of \Cref{prop:4fullpasture}.

\begin{Prop}\label[Prop]{prop:prob}
        For $n$ sufficiently large, the probability that $\Pib$ satisfies (\ref{eq:starstar}) is
        at least $1 - e^{-cn}$ for some absolute constant $c > 0$.
    \begin{proof}
        It suffices to show that for some absolute constant $c > 0$ and any $x, y, z, w \in G$
        \begin{itemize}
            \item[$\boldsymbol{\rightarrow}$] 
            With probability at least $1 - e^{-cn}$, $\overline{(tx, ty)} \in \mathcal{N}$
            and $\overline{(tz, tw)} \in \mathcal{N}$ hold simultaneously for at least one $t \in G$.
        \end{itemize}
        Indeed, if this is the case for all $(x,y,z,w) \in G^4$, then (\ref{eq:starstar}) will hold with probability at least 
        $1 - n^4 e^{-cn}$. This is bounded below by $1 - e^{-c'n}$ for any $0 < c' < c$ and sufficiently large $n$.
        
        We now fix $x, y, z, w$ and consider the sequence of (unordered) pairs of hexagons
        \[
            h_t \deq \left\{\overline{(tx, ty)}, \overline{(tz, tw)}\right\}, \qquad (t \in G)
        \]
        (Note that $h_t$ may degenerate to contain only one element.) 
         
         The tuples 
        $(tx, ty)$ for $t \in G$ are all distinct. The same is true for $(tz, tw)$. Since a hexagon contains at most 6 elements, each hexagon of $(G, \epsilon)$ can occur in at most 12 different hexagonal pairs $h_t$.
        We form a graph $\Gamma$ 
        with vertex set $G$
        by drawing an edge between
        $s, t \in G$ whenever the hexagonal pairs $h_s, h_t$ intersect in a common hexagon. Then $\Gamma$ has $n$ vertices and each vertex of $\Gamma$ has degree at most $2\times(12 - 1) = 22$. By a standard graph-theoretic argument, this guarantees that $\Gamma$ contains an independent set of size 
        $\geq n / 23$. That is, we can find $t_1, \dots, t_m \in G$ 
        with $m = \lceil n / 23 \rceil$
        such that 
        $h_{t_1}, \dots, h_{t_m}$ 
        are pairwise disjoint. For any $i$, the probability that the one or two hexagons in 
        $h_{t_i}$ lie in $\Nc$ is at least $\left(\frac{1}{2}\right)^2 = 1/4$.
        Recall that in choosing $\Nc$, the events that this or that hexagon is included are all jointly independent.
        Since the hexagonal pairs $h_{t_1}, \dots, h_{t_m}$ are pairwise disjoint,
        \[
            \mathbb{P}\left[h_{t_1} \nsubseteq \Nc \wedge \dots \wedge
            h_{t_m} \nsubseteq \Nc\right] = 
            \prod_{i = 1}^m \mathbb{P}\left[h_{t_i} \nsubseteq \Nc\right] \leq \left(1 - \frac{1}{4}\right)^m \leq \left(\frac{3}{4}\right)^{n/23}
        \]
        If $h_{t_i} \subseteq \Nc$ for some $i$, then the statement above is true for $(x, y, z, w)$.
        We may therefore take 
        $c = \frac{1}{23}\log(4/3)$.
    \end{proof}
 \end{Prop}

 The above proposition gives an upper bound for the probability that $\Nc$ is \emph{not} a hyperfield. The following result provides a lower bound on this probability.

\begin{Prop}\label[Prop]{prop:nonhyper}
    In the hyperfield lottery, the probability that $\Pib$ is \emph{not} a hyperfield is at least 
    $2^{-n}$, provided $n \geq 2$.
\end{Prop}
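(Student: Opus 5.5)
Our plan is to find an event of probability at least $2^{-n}$ under which condition (A) of \Cref{prop:hexagon_characterization} fails. By that proposition, $\Pib$ is then not a hyperfield. The natural candidate is the event that some fixed $x\in G$ with $x\neq\epsilon$ lies in no hexagon of $\Nc$ as a first coordinate. That is, $\overline{(x,y)}\notin\Nc$ for every $y\in G$.

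Fix such an $x$; it exists because $n\geq 2$. Consider the set
\[
S_x \deq \left\{\overline{(x,y)} \;\middle|\; y \in G\right\}\subseteq \hexagon(G).
\]
It has at most $n$ elements, since it is the image of $G$ under $y\mapsto\overline{(x,y)}$ (and possibly fewer, when distinct $y$ give the same hexagon). Each hexagon is included in $\Nc$ independently with probability $\tfrac12$. Hence the probability that $S_x\cap\Nc=\varnothing$ equals $2^{-\#S_x}\geq 2^{-n}$. On this event, condition (A) fails at $x$, because no $y$ satisfies $\overline{(x,y)}\in\Nc$. So the pasture does not arise from a hyperfield, and the bound follows.

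The one point to check is that the failure of (A) really rules out a hyperfield, independently of (B). This is immediate from \Cref{prop:hexagon_characterization}, since (A) is a necessary condition. The only delicate step is the choice of $x\neq\epsilon$, which the hypothesis $n\ge2$ guarantees. No further obstacle is expected.
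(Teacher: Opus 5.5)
Your proposal is correct and is essentially the paper's own proof. You fix $x\neq\epsilon$ and observe that there are at most $n$ hexagons of the form $\overline{(x,y)}$. With probability at least $2^{-n}$ none of them lies in $\mathcal{N}$, and then condition (A) of \Cref{prop:hexagon_characterization} fails.
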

\begin{proof}
    Let $g \in G \setminus \{ \epsilon\}$.
    The probability that the nullset of $\Pib$ does not contain any hexagons of the form
    $\overline{(g,x)}$ is at least $2^{-n}$, since there at most $n$ such hexagons. When this happens $\Pib$ cannot be a hyperfield, since it fails condition (A) of \Cref{prop:hexagon_characterization}.
\end{proof}
 
\begin{Prop}\label[Prop]{prop:G-G}
     The probability that $\overline{(\epsilon, x)} \in \Nc$ for all $x \in G$ is at most $e^{-cn}$ for some absolute constant $c > 0$.
 \end{Prop}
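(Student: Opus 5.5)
The plan is to count how many distinct hexagons the family $\{\overline{(\epsilon, x)} : x \in G\}$ contains, and then use independence. The hexagons of the lottery are included in $\Nc$ independently with probability $\frac{1}{2}$ each. So if the family contains at least $m$ distinct hexagons, the probability that all of them lie in $\Nc$ is exactly $2^{-m'}$, where $m' \geq m$ is the number of distinct hexagons. It therefore suffices to show that the number of distinct hexagons of the form $\overline{(\epsilon, x)}$ is at least $n/6$, or at least some fixed positive proportion of $n$. This gives the bound $2^{-n/6} = e^{-cn}$ with $c = \frac{\log 2}{6}$.

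For the counting step, note that the fundamental pairs $(\epsilon, x)$ for $x \in G$ are pairwise distinct, since their second coordinates differ. By \Cref{df:hexagon}, each hexagon is the $S_3$-orbit of a fundamental pair and hence contains at most $6$ distinct fundamental pairs. A given hexagon can therefore contain at most $6$ of the pairs $(\epsilon, x)$, so the map $x \mapsto \overline{(\epsilon, x)}$ is at most $6$-to-$1$. Its image consequently has size at least $\lceil n/6 \rceil$.

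Combining the two steps gives
\[
\mathbb{P}\left[\overline{(\epsilon,x)} \in \Nc \text{ for all } x \in G\right] = 2^{-\#\{\overline{(\epsilon,x)} : x \in G\}} \leq 2^{-n/6},
\]
which is the claim. There is no real obstacle here. The only point needing care is to phrase the bound in terms of distinct hexagons, so that repeated hexagons are not double-counted in the independence computation. The at-most-$6$-to-$1$ observation handles this, in the same way the bound of $12$ was used in the proof of \Cref{prop:prob}.
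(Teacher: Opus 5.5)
Your proof is correct and is essentially the paper's argument. The paper likewise notes that $x \mapsto \overline{(\epsilon,x)}$ hits at least $\lceil n/6 \rceil$ distinct hexagons (fixing exactly that many rather than using the whole image), then uses independence to get $2^{-n/6}$, i.e.\ $c = \frac{1}{6}\log 2$.
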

    \begin{proof}
    As $x$ ranges over $G$, the expression $\overline{(\epsilon, x)}$ takes on at least $n/6$ distinct values. Let 
        $x_1, \dots, x_m \in G$ be such that 
        $\overline{(\epsilon, x_1)}, \dots, \overline{(\epsilon, x_m)}$ are distinct with $m = \lceil n/6 \rceil$. The events that one or another of $\overline{(\epsilon, x_i)}$ is included in $\mathcal{N}$ are jointly independent, each occurring with probability $\frac{1}{2}$. Hence
        \[
            \mathbb{P}\left[\overline{(\epsilon, x_1)} \in \mathcal{N} \wedge \dots \wedge \overline{(\epsilon, x_m)} \in \mathcal{N}\right] = \prod_{i = 1}^m \mathbb{P}\left[\overline{(\epsilon, x_i)} \in \mathcal{N}\right] = \left(\frac{1}{2}\right)^m \leq \left(\frac{1}{2}\right)^{n/6}
        \]
        We may therefore take $c = \frac{1}{6} \log 2$.
    \end{proof}
 
    Before discussing the next property, we need the following elementary observations.
    \begin{Lm}\label[Lm]{lm:numhex}
        Let $\mathbb{H}$ be any hyperfield on $G$. Then $\mathbb{H}$ has at least $(n - 1)/6$ hexagons.
    \end{Lm}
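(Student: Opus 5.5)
The plan is to use condition (A) of \Cref{prop:hexagon_characterization}, which every hyperfield on $G$ satisfies. For each $x \in G$ with $x \neq -1$, condition (A) gives some $y_x \in G$ such that the hexagon $\overline{(x, y_x)}$ lies in $\Hex(\mathbb{H})$. This produces a family of $n-1$ fundamental pairs $(x, y_x)$, all with distinct first coordinates, and every one of them lies in a hexagon of the nullset.

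Next I count how many of these pairs a single hexagon can absorb. By \Cref{df:hexagon}, a hexagon is the multiset of six fundamental pairs
\[
    (u, v),\, (v, u),\, (uv^{-1}, v^{-1}),\, (v^{-1}, uv^{-1}),\, (vu^{-1}, u^{-1}),\, (u^{-1}, vu^{-1}).
\]
So it contains at most $6$ distinct fundamental pairs, and in particular at most $6$ distinct first coordinates. Since the pairs $(x, y_x)$ have pairwise distinct first coordinates, each hexagon of $\Hex(\mathbb{H})$ contains at most $6$ of them.

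Finally, the map $x \mapsto \overline{(x, y_x)}$ sends the $(n-1)$-element set $G \setminus \{-1\}$ into $\Hex(\mathbb{H})$, and each fibre has size at most $6$. Hence $6 \cdot \#\Hex(\mathbb{H}) \geq n - 1$, which is the claimed bound $\#\Hex(\mathbb{H}) \geq (n-1)/6$. The only step needing any care is the observation that a hexagon contains at most six distinct first coordinates, and that follows immediately from the explicit list above, so I do not expect a genuine obstacle.
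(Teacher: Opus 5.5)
Your proof is correct and is essentially the paper's argument. The paper obtains, for each $x \neq 0,-1$, the fundamental pair $(x,-y)$ from a nonzero $y \in x \boxplus 1$ (via \Cref{rmk:nonempty}), which is exactly the content of condition (A) that you cite, and then concludes with the same count: a hexagon contains at most six fundamental pairs, so it accounts for at most six of the $n-1$ elements.
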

    \begin{proof}
       Consider any element $x \neq 0, -1$ in $\mathbb{H}$. By \Cref{rmk:nonempty}, the set $x \boxplus 1$ contains some nonzero element $y$. Then 
       $x \boxplus -y \ni -1$. Thus $(x, -y)$ is a fundamental pair. It follows that each of the $n-1$ elements of $\mathbb{H} \setminus \{0, -1\}$ lies in a fundamental pair of some hexagon of $\mathbb{H}$. Since each hexagon can account for at most 6 elements, we get the promised bound.
    \end{proof}
    
    \begin{Lm}\label[Lm]{lm:numend}
        There are at most $n^{\log_2 n}$ endomorphisms $G \to G$.        
    \end{Lm}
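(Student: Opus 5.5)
We bound the number of endomorphisms of $G$ by the size of a generating set. The point is that a group homomorphism $f: G \to G$ is determined by its values on any generating set of $G$. So if $G$ can be generated by $k$ elements $g_1, \dots, g_k$, then the map $f \mapsto (f(g_1), \dots, f(g_k))$ is injective from $\operatorname{End}(G)$ into $G^k$. This gives at most $n^k$ endomorphisms.

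It therefore suffices to show that every finite group of order $n$ has a generating set of size at most $\log_2 n$. We build one greedily. Start with the trivial subgroup $H_0 = \{1\}$. While $H_i \neq G$, pick $g_{i+1} \in G \setminus H_i$ and set $H_{i+1} = \langle H_i, g_{i+1}\rangle$. Each $H_i$ is a proper subgroup of $H_{i+1}$, so by Lagrange's theorem $[H_{i+1} : H_i] \geq 2$, and hence $\#H_i \geq 2^i$. Since $\#H_i \leq n$, the process terminates after some $k \leq \log_2 n$ steps with $H_k = G$. Then $g_1, \dots, g_k$ generate $G$.

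Combining the two observations gives $\#\operatorname{End}(G) \leq n^{k} \leq n^{\log_2 n}$, using $n \geq 1$ for the last inequality. Commutativity of $G$ is not even needed. The case $n = 1$ is trivial, since $G$ then has exactly one endomorphism and $n^{\log_2 n} = 1$.

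I do not expect a real obstacle here. The only points requiring care are that a homomorphism is determined by its values on generators, and the Lagrange doubling argument. Both are routine.
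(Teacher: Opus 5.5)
Your proof is correct, and it takes a different route from the paper's in how the small generating set is obtained. The paper uses the structure theorem to write $G=\prod_{i=1}^m G_i$ with each $G_i$ nontrivial cyclic. It then notes that $\#\Hom(G,G)=\prod_{i=1}^m \#\Hom(G_i,G)\le n^m$, since a map out of a cyclic group is fixed by the image of a generator. Finally it gets $m\le\log_2 n$ from $n=\prod_i \#G_i\ge 2^m$. You instead produce a generating set greedily along a strictly increasing chain of subgroups and bound its size by Lagrange's theorem.

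The underlying principle is the same in both: an endomorphism is determined by the images of at most $\log_2 n$ elements. Your argument needs only Lagrange rather than the classification of finite abelian groups, and it works verbatim for non-abelian finite groups. That is the generality mentioned in the paper's remark following the lemma, and the setting of the skew-hyperfield appendix. The paper's decomposition gives an exact product formula for $\#\Hom(G,G)$ in the abelian case, which could be sharpened if a finer count were needed. For the crude $e^{o(n)}$ bound used later, the two arguments are interchangeable.
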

    \begin{proof}
        By the structure theorem for finite abelian groups, we can write $G = \prod_{i=1}^m G_i$ where each $G_i$ is a non-trivial cyclic group. Since a homomorphism out of a cyclic group is determined by the image of a generator, we have
        \[
            \#\Hom(G, G) = \#\prod_{i = 1}^m \Hom(G_i, G) = \prod_{i = 1}^m  \#\Hom(G_i, G) \leq \prod_{i = 1}^m \#G = n^m
        \]
        On the other hand,
        \[
            n = \#G = \#\prod_{i=1}^m G_i = \prod_{i =1}^m \#G_i \geq \prod_{i = 1}^m 2 = 2^m
        \]
        So $m \leq \log_2 n$ and $\#\Hom(G,G) \leq n^{\log_2 n}$.
    \end{proof}
    \begin{Rmk}
        More precise upper bounds can be obtained---in the generality of all finite groups. See for instance \cite{Neumann95}. The crude upper bound above will suffice for our purposes.
    \end{Rmk}
    \begin{Prop}\label[Prop]{prop:surjmorphism}
        Let $\mathbb{H}$ be a hyperfield on $G$.
        For sufficiently large $n$, the probability that there exists a bijective morphism $\mathbb{H} \to \Pib$ is at most $e^{-cn}$ for some absolute constant $c > 0$.
    \end{Prop}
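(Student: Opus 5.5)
We will argue by a union bound over all candidate maps. A bijective morphism $\mathbb{H} \to \Pib$ is, by \Cref{df:pasturemorphism}, a group automorphism $f$ of $G$ with $f(-1) = \epsilon$ such that $\overline{(f(x), f(y))} \in \Nc$ whenever $\overline{(x,y)} \in \Hex(\mathbb{H})$. By \Cref{lm:numend} there are at most $n^{\log_2 n}$ such candidates $f$. It therefore suffices to show that for each fixed automorphism $f$, the probability that $f$ is a morphism $\mathbb{H} \to \Pib$ is at most $2^{-(n-1)/6}$. The union bound then gives total probability at most
\[
n^{\log_2 n}\, 2^{-(n-1)/6} = 2^{(\log_2 n)^2 - (n-1)/6},
\]
which is at most $e^{-cn}$ for any $0 < c < \frac{1}{6}\log 2$, once $n$ is sufficiently large.

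For a fixed bijective $f$, note that $f$ induces a bijection $\hexagon(G) \to \hexagon(G)$, $\overline{(x,y)} \mapsto \overline{(f(x), f(y))}$. This map is well defined because $f$ is a homomorphism, so it commutes with the $S_3$-action on fundamental pairs listed in \Cref{df:hexagon}. It is injective because its inverse is induced by $f^{-1}$. Hence the distinct hexagons of $\mathbb{H}$ are sent to distinct hexagons of $G$. By \Cref{lm:numhex} there are at least $(n-1)/6$ hexagons of $\mathbb{H}$, so $f$ being a morphism requires at least $(n-1)/6$ specified, pairwise distinct hexagons to lie in $\Nc$. Since hexagons are included in $\Nc$ independently with probability $\frac12$, this event has probability at most $2^{-(n-1)/6}$.

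The only delicate point is injectivity of the induced map on hexagons: without it, several hexagons of $\mathbb{H}$ could collapse onto a single hexagon and the independence count would fail. This is exactly where bijectivity of $f$ is used. The remaining steps are routine: the superpolynomial but subexponential count $n^{\log_2 n}$ of endomorphisms is beaten by the exponential factor $2^{-(n-1)/6}$.
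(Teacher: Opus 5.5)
Your proof is correct and follows the paper's argument. It takes a union bound over the at most $n^{\log_2 n}$ automorphisms of $G$, then uses \Cref{lm:numhex} together with independence to bound each term by $2^{-(n-1)/6}$; the only difference is that the paper reduces to the identity map (implicitly transporting the structure of $\mathbb{H}$ along $f$), whereas you treat each $f$ directly by noting that the induced map on hexagons is injective, which is exactly what justifies that reduction.
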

    \begin{proof}
        There are at most $\#\Aut G$ possibilities for the underlying group homomorphism ${G \to G}$. Since, by \Cref{lm:numend},
        \[
            \#\Aut G \leq \#\Hom(G, G) \leq n^{\log_2 n} \leq e^{(\log_2 n)^2} = e^{o(n)}
        \]
        it suffices to show that the \emph{identity} map $G \to G$ induces a morphism $\Hb \to \Pib$ with probability at most $e^{-cn}$ for some absolute constant $c > 0$. If $\epsilon$ is not the unit of $\mathbb{H}$, then this simply cannot occur. Otherwise, it happens if and only if 
        $\Hex(\Hb) \subseteq \mathcal{N}$.
        Now $\Hex(\Hb)$ contains at least ${(n-1)/6}$ hexagons, by \Cref{lm:numhex}.
        Since the hexagons in $\Pib$ are chosen to be included independently---each with probability $\frac{1}{2}$---the probability that $\mathcal{N}$ contains all of them is at most
        $2^{-(n - 1)/6}$. We may therefore take 
        $c = \frac{1}{7}\log 2$.
    \end{proof}
    Though we have not needed it earlier, it will at this point be extremely beneficial to have a good estimate for the size of $\hexagon(G)$.
    \begin{Prop}\label[Prop]{prop:count}
        Write $G[3]$ for the 3-torsion subgroup of $G$. Then
        \[
            \# {\hexagon}(G) = \frac{n^2 + 3n + 2\#G[3]}{6}
        \]
    \end{Prop}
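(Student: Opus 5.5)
The count follows from Burnside's lemma. By \Cref{df:hexagon}, hexagons are the orbits of $S_3$ acting on the set of fundamental pairs, which we have identified with $G^2$. The six elements of $S_3$ act on a pair $(u,v)$ by
\[
(u,v)\mapsto (u,v),\ (v,u),\ (uv^{-1},v^{-1}),\ (v^{-1},uv^{-1}),\ (vu^{-1},u^{-1}),\ (u^{-1},vu^{-1}).
\]
We first check that this really is a group action, or equivalently that it is the action induced from permuting coordinates of $(x,y,z)\in G^3$ modulo the diagonal $G$-action. This holds by construction. Burnside then gives $\#\hexagon(G)=\frac16\sum_{\sigma\in S_3}\#\mathrm{Fix}(\sigma)$, and it remains to count fixed points for each conjugacy class of $S_3$.

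\textbf{Identity.} Every pair is fixed, which contributes $n^2$.

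\textbf{Transpositions.} There are three of them. For $(u,v)\mapsto(v,u)$, the fixed pairs are those with $u=v$, giving $n$. By conjugacy, each of the other two transpositions also has $n$ fixed points. One can also check directly: $(uv^{-1},v^{-1})=(u,v)$ forces $v^2=1$ and $uv^{-1}=u$, hence $v=1$, again giving $n$ choices of $u$. To be safe, I would instead work in the $G^3$-orbit picture. The transposition swapping $x$ and $y$ fixes the orbit of $(x,y,z)$ iff $(y,x,z)=g(x,y,z)$ for some $g$. Comparing third coordinates gives $g=1$, so $x=y$, and there are $n$ such orbits. Altogether the transpositions contribute $3n$.

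\textbf{3-cycles.} There are two. The cycle sends $(x,y,z)\mapsto(y,z,x)$, and this equals $g(x,y,z)$ iff $y=gx$, $z=gy$, $x=gz$. Hence $g^3=1$, and the orbit is $\{(x,gx,g^2x)\}$, so orbits are in bijection with $g\in G[3]$. Each 3-cycle therefore contributes $\#G[3]$, for a total of $2\#G[3]$.

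Summing gives $\#\hexagon(G)=\frac{n^2+3n+2\#G[3]}{6}$.

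The only subtle point is the legitimacy of the Burnside setup: $S_3$ acting on $G^3/G$. It is well defined because the $S_3$ and $G$ actions commute, and the hexagons are exactly its orbits. Fixed points must be counted on orbits rather than on tuples, which is why the condition is ``$\sigma(x,y,z)=g(x,y,z)$ for some $g$''. In each case $g$ is determined, so there is no overcounting.
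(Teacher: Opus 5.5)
Your proof is correct and follows the paper's argument: Burnside's lemma for $S_3$ acting on the fundamental pairs $G^2 = G^3/G$, with fixed-point counts $n^2$ for the identity, $n$ for each transposition and $\#G[3]$ for each 3-cycle. Your passage to the $G^3$-orbit picture, solving $\sigma(x,y,z) = g(x,y,z)$, is only a reformulation of the paper's direct computation on normalized pairs, where the 3-cycle fixed points appear as $(x,x^2)$ with $x^3=1$.
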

    \begin{proof}
        We appeal to Burnside's Lemma for the group $S_3$ acting on the fundamental pairs in $G^2 = G^3/G$.
        The number of fundamental pairs fixed under the identity is of course $\#G^2$. The elements fixed under the transposition $(x, y) \mapsto (y, x)$ are exactly the pairs $(x, x)$ for $x \in G$. There are exactly $n$ of these. By symmetry, the other transpositions have the same number of fixed-points. The 3-cycle $(x, y) \mapsto (yx^{-1}, x^{-1})$ has as its fixed-points precisely the pairs $(x, x^2)$ where $x \in G$ satisfies $x^3 = 1$.
        There are $\#G[3]$ such elements.
        The inverse 3-cycle $x,y \mapsto (y^{-1}, xy^{-1})$ has the same number of fixed-points. Thus the average number of fixed points is precisely the number stated above.
    \end{proof}
    From \Cref{prop:count}, we see that $\#\hexagon(G) \sim\frac{\#G^2}{6}$. We now use this to show that most finite pastures have no automorphisms.
    \begin{Prop}\label[Prop]{prop:noauto}
        For sufficiently large $n$, the probability that $\Pib$ admits a non-trivial automorphism is at most $2^{-cn^2}$ for some absolute constant $c > 0$.
    \end{Prop}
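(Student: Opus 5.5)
We use a union bound over the non-identity automorphisms $\sigma$ of $G$ that fix $\epsilon$. An automorphism of a pasture is a group automorphism preserving the unit and the nullset. By \Cref{lm:numend} there are at most $n^{\log_2 n} = 2^{o(n^2)}$ such $\sigma$. It therefore suffices to show that, for each fixed $\sigma \neq \mathrm{id}$, the probability that $\sigma$ preserves $\Nc$ is at most $2^{-c'n^2}$ for an absolute constant $c' > 0$.

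Fix $\sigma \neq \mathrm{id}$. It acts on $\hexagon(G)$ by $\overline{(u,v)} \mapsto \overline{(\sigma u, \sigma v)}$, which is well defined because $\sigma$ commutes with the $S_3$-action on fundamental pairs. The event $\sigma(\Nc) = \Nc$ means $\Nc$ is a union of $\langle\sigma\rangle$-orbits on $\hexagon(G)$. Since hexagons are included independently with probability $\tfrac12$, this event has probability exactly $2^{\#\text{orbits} - \#\hexagon(G)}$. Every orbit that is not a fixed point has size at least $2$. Hence
\[
\#\hexagon(G) - \#\text{orbits} \;\geq\; \tfrac12\bigl(\#\hexagon(G) - \#\mathrm{Fix}(\sigma)\bigr),
\]
where $\mathrm{Fix}(\sigma)$ is the set of hexagons fixed by $\sigma$. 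By \Cref{prop:count}, $\#\hexagon(G) \geq n^2/6$. The proof therefore reduces to showing $\#\mathrm{Fix}(\sigma) \leq (1-\delta)n^2/6$ for some absolute $\delta > 0$.

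To bound the fixed hexagons, note that a hexagon $\overline{(u,v)}$ is fixed by $\sigma$ exactly when $(\sigma u, \sigma v) = \tau(u,v)$ for one of the six elements $\tau \in S_3$, acting through the formulas in \Cref{df:hexagon}. Let $H = \{g : \sigma g = g\}$. This is a proper subgroup, so $\#H \leq n/2$. We count fundamental pairs $(u,v)$ for each $\tau$ separately.
\begin{itemize}
\item For $\tau = \mathrm{id}$ we need $u, v \in H$, which gives at most $n^2/4$ pairs.
\item For $\tau$ the swap $(u,v)\mapsto(v,u)$, the pair is determined by $u$, since $v = \sigma u$. This gives at most $n$ pairs. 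The other two transpositions are similar: for instance $\sigma u = uv^{-1}$ and $\sigma v = v^{-1}$ force $v = u(\sigma u)^{-1}$, so again $u$ determines the pair.
\item For the 3-cycle, $\sigma u = vu^{-1}$ and $\sigma v = u^{-1}$ give $v = u\,\sigma u$, so $u$ determines the pair and there are at most $n$ pairs. The inverse 3-cycle is the same.
\end{itemize}
Each fixed hexagon contains at least one fundamental pair, and in fact at least one fundamental pair satisfying one of the six relations. Dividing by the hexagon size is delicate, so we use the crude count
\[
\#\mathrm{Fix}(\sigma) \;\leq\; \#\{\text{fixed pairs}\} \;\leq\; n^2/4 + 5n.
\]
This is not smaller than $n^2/6$, so we refine the bound. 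Hexagons containing at most $3$ distinct pairs number $O(n)$: a degenerate hexagon has a pair fixed by a nontrivial element of $S_3$, and there are only $O(n)$ such pairs. Every non-degenerate fixed hexagon contains all $6$ of its pairs, and each of those pairs satisfies some relation. Any hexagon fixed via $\tau = \mathrm{id}$ at one of its pairs is fixed via the identity at all of its pairs. So the number of non-degenerate fixed hexagons is at most
\[
\frac{n^2/4}{6} + O(n) \;=\; \frac{n^2}{24} + O(n).
\]
This gives $\#\mathrm{Fix}(\sigma) \leq n^2/24 + O(n)$, so $\#\hexagon(G) - \#\text{orbits} \geq \tfrac12\bigl(\tfrac{n^2}{6} - \tfrac{n^2}{24}\bigr) - O(n) = \tfrac{n^2}{16} - O(n)$.

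Combining this with the union bound, the probability that $\Pib$ has a non-trivial automorphism is at most $2^{o(n^2)} \cdot 2^{-n^2/16 + O(n)}$. This is $\leq 2^{-cn^2}$ for any $c < 1/16$ and $n$ large.

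The main obstacle is the counting of fixed hexagons in the third paragraph. The crude bound by fixed pairs is not good enough, and the argument depends on the degenerate and non-degenerate split being handled correctly. I expect the remaining steps to be routine.
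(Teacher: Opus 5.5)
Your proof is correct and follows essentially the same route as the paper. You take a union bound over the $e^{o(n)}$ automorphisms of $G$, then show that a fixed non-trivial automorphism fixes at most $n^2/24 + O(n)$ hexagons: the identity case gives hexagons with both coordinates in the fixed subgroup, and the five non-identity elements of $S_3$ contribute $O(n)$. Hence a positive fraction of hexagons lie in orbits of size at least $2$.

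The differences are cosmetic. The paper counts the identity case directly as the number of hexagons of the fixed subgroup $G^f$ via \Cref{prop:count}, rather than through your degenerate/non-degenerate split. It also settles for the cruder statement that at most a third of the hexagons are fixed.
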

    \begin{proof}
        By \Cref{lm:numend}, $G$ has at most $e^{o(n)}$ automorphisms. Therefore, it suffices to show that for any fixed non-trivial automorphism $f: G \to G$, the probability that $\mathcal{N}$ is invariant under $f$ is at most $2^{-c'n^2}$ for some absolute constant $c'$.
        We first show that a positive proportion of hexagons in $\hexagon(G)$ fail to be fixed by $f$. We write $[x : y : z]$ for the orbit of $(x,y,z) \in G^3$ under $G$, which is a fundamental pair. If the hexagon in which $[x : y : z]$ lies is fixed by $f$ then
        \[
            [f(x) : f(y) : f(z)] =
            [x:y:z]^{\sigma} \quad \text{for some $\sigma \in S_3$}
        \]
        If $\sigma$ is the identity, then
        \[
            [f(x) : f(y) : f(z)] = [x : y :z]
        \]
        shows that $xz^{-1}, yz^{-1} \in G^f$.
        There are precisely $\#\hexagon(G^f)$ such hexagons.
        
        If $\sigma$ is a transposition, then---without loss of generality---we may assume 
        $\sigma = (1 2)$. Then
        \[
            [f(x) : f(y) : f(z)] = [y : x : z]
        \]
        shows that $yz^{-1} = f(xz^{-1})$. Thus the hexagon $\overline{(x,y,z)}$ has the form $\overline{(g, f(g))}$. There can be at most $\#G$ such hexagons for a given transposition $\sigma$, so at most $3\#G$ overall.

        If $\sigma$ is a 3-cycle, then---without loss of generality---we may assume 
        \[
            [f(x) : f(y) : f(z)] = [z:x:y]
        \]
        In particular, this gives 
        $yz^{-1} = f(zx^{-1})$.
        That is,
        $yz^{-1} = f(xz^{-1})^{-1}$. So the hexagon $\overline{(x,y,z)}$ has the form $\overline{(g, f(g)^{-1})}$. Again, there can be at most $\#G$ such hexagons for a given 3-cycle $\sigma$, so $2\#G$ overall.

        Since $f$ is a non-trivial automorphism, we have $\#G^f \leq \#G/2$.
        Using \Cref{prop:count}, it follows that the number of hexagons fixed by $f$ is at most
        \[
            \#\hexagon(G^f) + 5\#G \leq \frac{n^2}{24} + O(n) = \left(\frac{1}{4} + o(1)\right)\#\hexagon(G)
        \]
        So for large $n$, at most a third of the hexagons of $G$ are fixed by $f$. Hence the number of $f$-orbits of $\hexagon(G)$ is at most 
        $\frac{2}{3}\#\hexagon(G)$. This means that there are at most $2^{\frac{2}{3}\#\hexagon(G)}$ pastures over $G$ that are invariant under $f$ compared to the $2^{\#\hexagon(G)}$ pastures in total. 
        The result now follows from the asymptotic
        $\#\hexagon(G) \sim \frac{n^2}{6}$.
    \end{proof}
    We end this section with a general lemma that transforms statements about the hyperfield lottery to statements about hyperfields considered up to isomorphism.
    \begin{Lm}\label[Lm]{lm:modiso}
        There exists an absolute constant $c > 0$ such that the following holds.
        Let $S$ be any collection of isomorphism classes of hyperfields on $G$ with unit $\epsilon$. Let $p$ be the probability that $\Pib$ lies in $S$. Then with the notation of \Cref{thm:main} we have
        \[
            \frac{\#S}{\#\mathcal{H}(G, \epsilon)} \leq cp e^{(\log_2 n)^2}
        \]
        as soon as $\#G$ is sufficiently large.
    \end{Lm}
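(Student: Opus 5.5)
The plan is to compare the number of isomorphism classes with the number of labelled structures, i.e.\ nullsets $\Nc \subseteq \hexagon(G)$ for which $(G,\epsilon,\Nc)$ is a hyperfield. Let $N$ be the number of pastures on $(G,\epsilon)$ that are hyperfields, and $N_S$ the number whose isomorphism class lies in $S$. By the counting translation of the lottery, $N_S = p\cdot 2^{\#\hexagon(G)}$. Since each isomorphism class in $S$ is represented by at least one labelled pasture, $\#S \le N_S$. The main work is a lower bound on $\#\mathcal{H}(G,\epsilon)$.

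To bound $\#\mathcal{H}(G,\epsilon)$ from below, observe that isomorphisms preserving $\epsilon$ are exactly automorphisms $f$ of $G$ fixing $\epsilon$ that carry one nullset onto the other. This follows from \Cref{prop:fullyfaithful}: an isomorphism of hyperfields is a bijective pasture morphism whose inverse is also a morphism, so it maps $\Hex$ bijectively. Hence each isomorphism class corresponds to at most $\#\Aut G$ labelled nullsets, and
\[
\#\mathcal{H}(G,\epsilon) \ge \frac{N}{\#\Aut G} \ge \frac{N}{n^{\log_2 n}},
\]
using \Cref{lm:numend}. Next, \Cref{prop:prob} combined with \Cref{prop:4fullpasture} shows that $\Pib$ is a hyperfield with probability at least $1-e^{-c_0 n} \ge \tfrac12$ for large $n$. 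Therefore $N \ge \tfrac12 \cdot 2^{\#\hexagon(G)}$.

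Combining these gives
\[
\frac{\#S}{\#\mathcal{H}(G,\epsilon)} \le \frac{p\, 2^{\#\hexagon(G)}\, n^{\log_2 n}}{\tfrac12\, 2^{\#\hexagon(G)}} = 2p\, n^{\log_2 n}.
\]
It remains to compare $n^{\log_2 n} = 2^{(\log_2 n)^2}$ with $e^{(\log_2 n)^2}$. Since $2<e$, we get $n^{\log_2 n} \le e^{(\log_2 n)^2}$, and so the constant $c=2$ works once $n$ is large enough for \Cref{prop:prob} to apply.

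The only delicate point I expect is the orbit-counting step: checking that isomorphism classes (preserving $\epsilon$) really are the $\Aut(G,\epsilon)$-orbits on valid nullsets. This needs the identification of hyperfield isomorphisms with pasture isomorphisms, namely that the hexagon map is bijective, which follows from applying the morphism condition to $f$ and to $f^{-1}$. After that, the orbit–stabilizer bound (every orbit has size at most $\#\Aut G$) finishes the argument. No automorphism-freeness is needed for this inequality; \Cref{prop:noauto} would only be relevant for the reverse inequality.
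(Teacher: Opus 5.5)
Your proof is correct and follows the paper's argument. It bounds $\#S$ by the number $p\cdot 2^{\#\hexagon(G)}$ of labelled nullsets, and bounds $\#\mathcal{H}(G,\epsilon)$ from below using \Cref{prop:prob} together with the fact that each isomorphism class is an orbit of size at most $\#\Aut G\le n^{\log_2 n}\le e^{(\log_2 n)^2}$. The only differences are cosmetic: you use the cruder bound $\tfrac12$ on the hyperfield probability (giving $c=2$), and you explicitly justify that $\epsilon$-preserving isomorphisms are automorphisms of $G$ carrying one nullset onto the other, which the paper takes for granted.
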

    \begin{proof}
        By \Cref{lm:numend}, the group $\mathscr{A} \deq \Aut(G, \epsilon)$ of automorphisms of $G$ fixing $\epsilon$ has order at most $e^{(\log_2 n)^2}$. 
        Since each element of $\mathcal{H}(G, \epsilon)$ corresponds to an $\mathscr{A}$-orbit of $2^{\hexagon(G)}$---and since each such orbit has size at most $e^{(\log_2 n)^2}$---the number of elements in $2^{\hexagon(G)}$ that give rise to a hyperfield is at most $e^{(\log_2 n)^2} \cdot \#\mathcal{H}(G, \epsilon)$.
        On the other hand, it is at least 
        $(1 - e^{-c'n})\cdot 2^{\#\hexagon(G)}$ for some absolute constant $c' >0$, by \Cref{prop:prob}. Hence
        \[ 
            e^{(\log_2 n)^2} \cdot \#\mathcal{H}(G, \epsilon) \geq (1 - e^{-c'n})\cdot 2^{\#\hexagon(G)}
        \]
        Now since any class in $S$ is realized by \emph{some} nullset in $2^{\hexagon(G)}$, we have 
        \[
        p \geq \frac{\#S}{2^{\#\hexagon(G)}}
        \]
        Multiplying the two inequalities above and rearranging, we get
        \[
            \frac{e^{(\log_2 n)^2}}{1 - e^{-c'n}} \cdot p \geq \frac{\#S}{\#\mathcal{H}(G, \epsilon)} 
        \]
        We can therefore take $c = (1 - e^{-c'})^{-1}$.
    \end{proof}
\section{Proofs of results}\label{sec:results}
\begin{proof}[Proof of \Cref{thm:runner_up}]
    Using the hyperfield-lottery terminology of \Cref{sec:lottery}, the probability that $\Pib$ is a hyperfield which is either not 0/0 or not 4-full is $e^{-\Omega(\#G)}$, by \Cref{prop:prob}. Hence as a direct consequence of \Cref{lm:modiso}, we have
    \[
        1-\frac{\#\mathcal{F}(G, \epsilon)}{\#\mathcal{H}(G, \epsilon)} = O\left(e^{-\Omega(\#G)} \cdot e^{(\log_2 \#G)^2}\right) = e^{-\Omega(\#G)} \qedhere
    \]
\end{proof}
\begin{Rmk}
    The proof of \Cref{thm:runner_up} also shows that asymptotically almost all finite pastures are hyperfields. This is an a priori surprising statement since pastures are ``less structured'' than hyperfields.
\end{Rmk}
As was observed in \cite{Jin}, the following result of Turnwald \cite{Turnwald} provides a strong constraint for small quotients of large fields. (See also the earlier result of Bergelson–Shapiro \cite{Bergelson}, which proves the result for infinite fields.)
\begin{Thm}[{\cite[Theorem 1]{Turnwald}}]\label[Thm]{thm:Turnwald}
    Let $\mathbb{F}$ be a skew field and let $\Gamma \subseteq \mathbb{F}^{\times}$ be a proper subgroup of finite index in $\mathbb{F}^{\times}$.
    If $\#\Fb^{\times} > (\Fb^{\times}:\Gamma)^4$ then $\Gamma - \Gamma = \Fb$.
\end{Thm}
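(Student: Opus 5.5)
Since this is \cite[Theorem 1]{Turnwald}, in the paper it suffices to cite it. If I had to prove it, I would split into the finite and infinite cases, writing $k \deq (\Fb^{\times}:\Gamma)$ throughout. First I would make a reduction. The set $\Gamma - \Gamma$ contains $0$ and is stable under left multiplication by $\Gamma$. So it is $\{0\}$ together with a union of (left) cosets of $\Gamma$. Hence it suffices to show that for each $a \in \Fb^{\times}$ the set $\Gamma \cap (a + \Gamma)$ is nonempty, i.e.\ that the number $N(a)$ of pairs $(x,y)\in\Gamma^2$ with $x - y = a$ is positive.

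\emph{Finite case.} By Wedderburn's theorem $\Fb = \Fb_q$ is a field, so $\Fb^{\times}$ is cyclic and $\Gamma$ is the group of $k$-th powers. I would detect membership in $\Gamma$ by $\mathbf{1}_\Gamma(x) = \frac1k\sum_{\chi^k = 1}\chi(x)$, with the sum over multiplicative characters. Expanding gives
\[
N(a) = \frac{1}{k^2}\sum_{\chi^k=1}\sum_{\psi^k = 1}\;\sum_{x \neq 0, a}\chi(x)\psi(x - a).
\]
Substituting $x = a u$, the inner sums become Jacobi sums times $\chi\psi(a)$.

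I would then bound the terms by type:
\begin{itemize}
\item The trivial pair contributes $q-2$.
\item Pairs with exactly one character trivial contribute $-1$ each, at most $2(k-1)$ in total.
\item Pairs with $\chi\psi$ trivial but $\chi$ nontrivial have inner sum of absolute value $1$.
\item All remaining pairs have absolute value $\sqrt q$.
\end{itemize}
This yields $k^2 N(a) \geq q - 2 - 3(k-1) - (k-1)^2\sqrt q$. The right-hand side is positive once $q - 1 > k^4$, since $\sqrt q > k^2$ then and the lower-order terms are absorbed, with small $k$ checked by hand. Checking these constants is routine.

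\emph{Infinite case.} Here no counting is available, so I would use a Ramsey/Schur argument following Bergelson--Shapiro. Fix $a$. I would choose an infinite sequence $t_1, t_2, \dots$ of elements of $\Fb$ built so that differences $t_j - t_i$ for $i<j$ are nonzero and so that the target relation is forced. Then I would colour each pair $i<j$ by the coset $\Gamma (t_j - t_i)$, using at most $k$ colours. By Ramsey's theorem there is a monochromatic triangle $i<j<l$ in some coset $g\Gamma$. The identity $(t_j - t_i) + (t_l - t_j) = t_l - t_i$ then gives $g\gamma_1 + g\gamma_2 = g\gamma_3$. Cancelling $g$ gives $\gamma_3 - \gamma_2 = \gamma_1 \in \Gamma$, which shows $\Gamma\cap(\Gamma-\Gamma)\neq\varnothing$. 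To reach an arbitrary coset $a\Gamma$ rather than $\Gamma$, I would refine the colouring to record the pair of cosets of $t_j-t_i$ and of $a^{-1}(t_j - t_i)$. I would then choose the $t_i$ inside a suitable coset so that the triangle identity becomes $x - y \in a\Gamma$ with $x,y\in\Gamma$.

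The main obstacle is this last refinement. The plain Schur argument only produces the coset $\Gamma$ itself, and getting every coset requires extra structure. I would try to obtain it from the multiplicative closure of $\Gamma - \Gamma$: show that the set of cosets it meets is a subgroup of $\Fb^{\times}/\Gamma$. If that fails, I would fall back on Turnwald's original combinatorial argument, which handles finite and infinite skew fields uniformly.
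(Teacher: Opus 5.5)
Citing Turnwald is exactly what the paper does, since it contains no proof of this theorem. Judged as a self-contained proof, though, your sketch only handles the finite case. That part is sound: after Wedderburn it is the standard Jacobi-sum count. The pairs with exactly one trivial character contribute $-\psi(-a)$ or $-\chi(a)$ rather than $-1$, but only absolute values enter. For $k\ge 2$ the inequality $q-(k-1)^2\sqrt q-(3k-1)>0$ follows directly from $\sqrt q>k^2$, so nothing needs to be checked by hand.

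The infinite case has a genuine gap, and it is the case the paper relies on. It is what forces $\mathbb{F}$ to be finite in the corollary bounding quotients with $1\boxplus -1\neq\mathbb{H}$. It is also the only case where the skew generality has content, since finite division rings are commutative.

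Your Ramsey argument cannot reach any coset other than $\Gamma$. Only the differences $t_j-t_i$ are coloured, so where the $t_i$ lie is irrelevant. The identity $(t_j-t_i)+(t_l-t_j)=t_l-t_i$ has all coefficients equal to $1$. So for any colouring refining the left-coset colouring, a monochromatic triangle gives $g\gamma_1+g\gamma_2=g\gamma_3$, and hence only $\Gamma\subseteq\Gamma-\Gamma$. Your proposed refinement is in fact vacuous: if $t_j-t_i\in g\Gamma$ then $a^{-1}(t_j-t_i)\in a^{-1}g\Gamma$, so the extra coordinate carries no information. To reach $a$, you would need $x,y,z$ in one left coset $g\Gamma$ with $x-y=za$. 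Then $\gamma_1-\gamma_2=\gamma_3a$, so $a=\gamma_3^{-1}\gamma_1-\gamma_3^{-1}\gamma_2$. But that is just a reformulation of the theorem, and Schur-type triangles provide no mechanism for producing such a solution.

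Your fallback fails too. $\Gamma-\Gamma$ is not multiplicatively closed, because a product of two differences is a signed sum of four elements of $\Gamma$. Nor is the set of cosets it meets a subgroup in general. In $\mathbb{F}_5$ with $\Gamma=\{\pm1\}$ one has $\Gamma-\Gamma=\{0,\pm 2\}$, which misses the trivial coset. So any such structural claim would have to use infiniteness in an essential way. Even if you proved it, a subgroup of $\mathbb{F}^{\times}/\Gamma$ may be trivial, and you would still have to hit the non-trivial cosets, which is the original problem. As written, your infinite case, like the paper, rests entirely on citing Turnwald (or Bergelson--Shapiro for commutative $\mathbb{F}$).
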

\begin{Rmk}
    Turnwald actually gives the bound $\#\F \geq (k-1)^4 + 4k$ where ${k = (\mathbb{F}^{\times} : \Gamma)}$.
    This gives a stronger statement for $k \geq 2$.
    For $k = 1$, the modified statement is easy to verify.
    The cleaner expression will be more convenient to work with.
\end{Rmk}
\begin{Cor}\label[Cor]{cor:1-1}
    Let $G$ be a finite group of order $n$. Up to isomorphism, there are at most 
    $n^4$ {quotient} skew hyperfields $\mathbb{H}$ with underlying group $G$ in which 
    $1 \boxplus -1 \neq \mathbb{H}$.
\end{Cor}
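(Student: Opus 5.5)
Suppose $\mathbb{H} \cong \mathbb{F}/\Gamma$ is a quotient skew hyperfield with underlying group $G$ of order $n$ and $1 \boxplus -1 \neq \mathbb{H}$. The plan is to show that $\mathbb{F}$ must be a finite skew field with fewer than roughly $n^5$ elements, and then to count. First, the meaning of $1 \boxplus -1 = \mathbb{H}$ in the quotient: by definition $\bar 1 \boxplus \overline{-1} = \{\overline{\alpha - \beta} : \alpha, \beta \in \Gamma\}$, which equals $\mathbb{H}$ exactly when $\Gamma - \Gamma = \mathbb{F}$. So our hypothesis says $\Gamma - \Gamma \neq \mathbb{F}$.

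Next, $\Gamma$ is a subgroup of index $n = (\mathbb{F}^\times : \Gamma)$. If $\Gamma$ were all of $\mathbb{F}^\times$ and $\#\mathbb{F} \ge 3$, then $\Gamma - \Gamma = \mathbb{F}$ (as in \Cref{ex:Kquot}). Hence either $\Gamma$ is proper, or $\mathbb{F} = \mathbb{F}_2$ and $n = 1$; the latter contributes a single hyperfield. In the proper case, \Cref{thm:Turnwald} gives $\#\mathbb{F}^\times \le n^4$. Infinite $\mathbb{F}$ is excluded automatically, because Turnwald's hypothesis $\#\mathbb{F}^\times > n^4$ then holds. So $\mathbb{F}$ is a finite skew field, and by Wedderburn's theorem $\mathbb{F} = \mathbb{F}_q$ is a finite field with $q - 1 \le n^4$.

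Now count. Since $\mathbb{F}_q^\times$ is cyclic, for each $q$ there is at most one subgroup $\Gamma$ of index $n$, namely the unique subgroup of order $(q-1)/n$. It exists only when $n \mid q - 1$, in which case $G \cong \mathbb{F}_q^\times/\Gamma$ must be cyclic. So each prime power $q$ yields at most one quotient hyperfield up to isomorphism (for a fixed identification of its underlying group with $G$, up to isomorphism). Moreover $q - 1$ must be a multiple $kn$ of $n$ with $kn \le n^4$, so $k \le n^3$. This gives at most $n^3$ admissible values of $q$, hence at most $n^3 + 1 \le n^4$ isomorphism classes for $n \ge 2$; for $n = 1$ the classes are just $\mathbb{F}_2$ and $\mathbb{K}$, and $\mathbb{K}$ has $1 \boxplus -1 = \mathbb{H}$, so the bound holds there too.

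I expect the main care to lie in the boundary bookkeeping rather than in any deep step: handling $\Gamma = \mathbb{F}^\times$ separately, and noting that each $q$ gives at most one class because the subgroup of a cyclic group of given index is unique. Even a cruder count, allowing one class per value $q - 1 \le n^4$, would still give roughly the bound $n^4 + 1$. The Wedderburn step is essential, because it is what removes non-commutative skew fields from consideration.
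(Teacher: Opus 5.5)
Your proposal is correct and follows the paper's route: Turnwald's theorem forces $\#\mathbb{F}^\times \le n^4$, so $\mathbb{F}$ is a finite field. The uniqueness of the index-$n$ subgroup of the cyclic group $\mathbb{F}_q^\times$ then gives at most one class per $q$.

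You are somewhat more careful than the paper in three places. You treat $\Gamma = \mathbb{F}^\times$ separately, which is needed since \Cref{thm:Turnwald} as stated requires a proper subgroup. You invoke Wedderburn explicitly. And you sharpen the count to $n^3$ using $n \mid q-1$.
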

\begin{proof}
    Suppose $\mathbb{H}$ is the quotient of some skew field $\mathbb{F}$ by a normal subgroup $\Gamma \subseteq \mathbb{F}^{\times}$, necessarily of finite index. Then 
    $1 \boxplus -1$ is the image of 
    $\Gamma - \Gamma$ under the quotient homomorphism. Also $n =
    (\mathbb{F}^{\times} : \Gamma)$.
    Suppose
    $1 \boxplus -1 \neq \mathbb{H}$.
    By \Cref{thm:Turnwald}, $\mathbb{F}^{\times}$ can have cardinality at most $n^4$. Hence $\mathbb{F}$ is a finite field. For any given finite field $\mathbb{F}$, the cyclic group $\mathbb{F}^{\times}$ has at most one quotient of order 
    $n$. So---up to isomorphism---there is at most one hyperfield on $G$ which is a quotient of $\mathbb{F}$.
    Since there is at most one finite field of any given order, there are at most $n^4$ possibilities for $\mathbb{H}$.
\end{proof}
\begin{proof}[Proof of \Cref{thm:main}]
    We consider again the random pasture $\Pib$ of \Cref{sec:lottery}. 
    Any morphism $\phi: \mathbb{F} \to \Pib$ from a skew field $\mathbb{F}$ factors as $\mathbb{F} \to \mathbb{F}/\ker \phi \to \Pib$.
    Hence $\mathbb{\Pib}$ admits a surjective morphism from a skew field if and only if it admits a bijective homomorphism from a quotient hyperfield $\mathbb{\Theta}$ on $G$. If in $\mathbb{\Theta}$ we have $1 \boxplus -1 = \mathbb{\Theta}$, then in $\mathbb{\Pib}$ we must have
    $1 \boxplus -1 = \mathbb{\Pib}$. The latter is precisely the statement that $\overline{(-1, x)} \in \Hex(\mathbb{\Pib})$ for all $x \in G$. By \Cref{prop:G-G}, this happens with probability $e^{-\Omega(\#G)}$. 
    On the other hand, if in $\mathbb{\Theta}$ we have $1 \boxplus -1 \neq \mathbb{\Theta}$, then by \Cref{cor:1-1}, $\mathbb{\Theta}$ is one of at most $\#G^4$ quotient hyperfields on $G$ with that property. For each of these hyperfields $\mathbb{\Theta}$, 
    a bijective morphism $\mathbb{\Theta} \to \Pib$ exists with probability $e^{-\Omega(\#G)}$, by \Cref{prop:surjmorphism}.
    The probability that this holds for \emph{some} such $\mathbb{\Theta}$ is at most $\#G^4 \cdot e^{-\Omega(\#G)} = e^{-\Omega(\#G)}$.
    Hence the probability that $\Pib$ is a hyperfield which admits a surjective homomorphism from a field is at most $e^{-\Omega(\#G)}$. 
    By \Cref{lm:modiso}, this implies
    \[
        \frac{\#\mathcal{I}(G, \epsilon)}{\#\mathcal{H}(G, \epsilon)} = O\left(e^{-\Omega(\#G)} \cdot e^{(\log_2 n)^2}\right) = e^{-\Omega(\#G)} .\qedhere
    \]
\end{proof}

\begin{proof}[Proof of \Cref{thm:noauto}]
In the hyperfield lottery, the probability that $\Pib$ is a hyperfield which admits a non-trivial automorphism is at most 
$e^{-\Omega(\#G^2)}$, by \Cref{prop:noauto}. 
By \Cref{lm:modiso}, this gives 
\[
    \frac{\#\mathcal{A}(G, \epsilon)}{\#\mathcal{H}(G, \epsilon)} = 
    O\left(e^{-\Omega(\#G^2)} \cdot e^{(\log_2 n)^2}\right) = 
    e^{-\Omega(\# G^2)}. \qedhere
\]
\end{proof}
\begin{proof}[Proof of \Cref{thm:count}]
Since a pasture on $G$ is given by a unit $\epsilon \in G[2]$ and a subset of hexagons in $\hexagon(G)$, there are precisely 
\[
    N \deq \#G[2] \cdot 2^{\frac{1}{6}(\#G^2 + 3\#G + 2\#G[3])}
\]
pastures on $G$ by \Cref{prop:count}.
\begin{itemize}
    \item By \Cref{prop:prob}, 
    the proportion of these which are hyperfields  is 
    $1 - e^{-\Omega(\#G)}$. Hence the number of hyperfields on $G$ \emph{up to isomorphism} is at least 
    \[
        \frac{1}{\#\Aut G} \left(1 - e^{-\Omega(\#G)}\right) \cdot N
    \]
    \item By \Cref{prop:nonhyper}, the proportion of these which are non-hyperfields is at least 
    $e^{-O(\#G)}$, and the proportion that have a nontrivial automorphism is at most $e^{-\Omega(\#G^2)}$. Hence the number of hyperfields on $G$ \emph{up to isomorphism} is at most
    \[
       \pa{\frac{1}{\#\Aut G} \left(1 - e^{-O(\#G)}\right) + e^{-\Omega (\#G^2)}} \cdot N = \frac{1}{\#\Aut G} \left(1 - e^{-O(\#G)}\right) \cdot N
    \]
    for sufficiently large $\#G.$ \qedhere
\end{itemize}
\end{proof}
To prove \Cref{cor:asymptotic}, we will need an upper bound on the number of abelian groups of any given order. Though such a bound is easily obtained, we now discuss an upper bound on the number of \emph{groups} of given order, in preparation for the discussion in \Cref{sec:questions}.

\begin{Thm}[{\cite{Neumann69}, Theorem A}]\label[Thm]{thm:numgroups}
    The number of isomorphism classes of groups of order $n$ is 
    $e^{O((\log n)^4)}$.
\end{Thm}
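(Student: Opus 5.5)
The plan follows the standard strategy: count groups of order $n$ via a $p$-group reduction and a crude count of presentations. Write $n = \prod_p p^{a_p}$ and set $\mu = \max_p a_p \le \log_2 n$.

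\textbf{Step 1: reduce to counting Sylow data and extension data.} The first step is to bound the number of isomorphism classes of groups of order $p^a$ by $p^{O(a^3)}$, which is Higman's bound. A group of order $p^a$ has a generating set of size at most $a$ and a polycyclic presentation of length $a$ with power and commutator relations. Each relation is a word in normal form, specified by at most $a$ exponents in $\{0,\dots,p-1\}$. There are $O(a^2)$ relations, so the count is $p^{O(a^3)}$. Hence $\log(\#\text{groups of order } p^{a}) = O(a^3\log p) = O((\log n)^3 \cdot \log p / \log n \cdot \log n)$. In the worst case this is $O((\log n)^3)$.

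\textbf{Step 2: assemble a general group from its Sylow subgroups.} For arbitrary $G$ of order $n$, take one Sylow $p$-subgroup $P_p$ for each $p \mid n$. Together these generate $G$, and there are at most $\log_2 n$ primes. We encode $G$ by a generating set of size $d \le \sum_p a_p \le \log_2 n$, obtained by taking polycyclic generators of each $P_p$.

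A group generated by $d$ elements is determined up to isomorphism by its multiplication table restricted to the generators. Equivalently, by the Cayley representation it is determined by the $d$-tuple of permutations in $S_n$ induced by the generators. This crude bound gives $(n!)^d = e^{O(n\log n \log n)}$, which is far too weak. The refinement I would attempt is Neumann's: encode $G$ as a subgroup of $\mathrm{Hol}$ of its Sylow data. Alternatively, bound the number of ways to multiply the normal-form words of the generators by specifying each product $g_ig_j$ and each power $g_i^{p_i}$ as normal-form words, which takes $O(d^2)$ words of length $d$. Each word carries $d$ exponents, each below $n$. This gives $n^{O(d^3)} = e^{O((\log n)^4)}$.

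\textbf{Main obstacle.} Step 2 is where the difficulty lies. Every element must have a normal form $g_1^{e_1}\cdots g_d^{e_d}$, with exponents bounded by the relative orders, along a composition-type series of length at most $\log_2 n$. A general group is not polycyclic, so for non-solvable groups a composition series with cyclic factors does not exist.

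To handle this, I would first produce a generating sequence $g_1,\dots,g_d$ with $d \le \log_2 n$ such that each subgroup in the chain $\langle g_1,\dots,g_i\rangle$ strictly grows. Such a sequence exists because the index at least doubles at each step. Then I would encode $G$ by the action of each generator on cosets of these chain subgroups. That encoding is Neumann's argument, and I would cite \cite{Neumann69} for it if the direct normal-form count fails.
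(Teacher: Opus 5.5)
There is a genuine gap: your direct argument at best handles solvable groups, and for the general case it falls back on quoting the theorem itself without the hypothesis the paper says is needed.

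\textbf{Step 2 is only valid for solvable groups.} Your normal-form count needs a polycyclic series of $G$ itself, and then it is fine. Take a composition series $1=G_0\lhd G_1\lhd\cdots\lhd G_\lambda=G$ with factors of prime order. There are $\lambda$ power relations and $\binom{\lambda}{2}$ conjugation relations, and each right-hand side is one of the $n$ normal forms. So there are at most $\lambda!\, n^{O(\lambda^2)}=e^{O((\log n)^3)}$ solvable groups of order $n$. Concatenating polycyclic sequences of the separate Sylow subgroups does not in general give such a series. Without it you have no collection process showing that your $O(d^2)$ relations determine the multiplication. Step 1 plays no role either: the isomorphism types of the Sylow subgroups say nothing about how they are glued together.

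\textbf{The nonsolvable case is not closed.} You identify this case yourself, but the suggested repair does not work:
\begin{itemize}
\item the subgroups $H_i=\langle g_1,\dots,g_i\rangle$ need not be normal;
\item a permutation action on the cosets of $H_i$ only determines $G/\mathrm{core}_G(H_i)$;
\item recording permutations of cosets costs factorially many choices in the index, which can be of order $n$, giving $e^{\Theta(n\log n)}$ rather than $e^{O((\log n)^4)}$.
\end{itemize}
What is missing is any control over the nonabelian composition factors. For example, you would need to bound the possibilities for $G/R$, with $R$ the solvable radical, and for the extension data. This is exactly where information about finite simple groups is required.

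\textbf{The fallback citation is incomplete.} Citing Neumann is what the paper does, but the paper also makes a point your sketch never mentions. Neumann's Theorem A assumes a bound on the number of simple groups of order $n$. It is the Classification of Finite Simple Groups that makes this number $O(1)$ and the theorem unconditional. Without that, your proposal proves the solvable case (after the fix above) and otherwise just quotes the statement to be proved.
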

This result by Neumann relies on the assumption that there is a similar bound on the number of finite simple groups of order $n$. The Classification of Finite Simple Groups shows that in fact this latter function is $O(1)$.
\begin{proof}[Proof of \Cref{cor:asymptotic}]
Let $G$ be any abelian group of order $n-1$.\footnote{By convention, hyperfields of order $n$ have underlying groups of order $n-1$. This matches the convention for finite fields.}
By \Cref{thm:count},
\[
    \log_2 \# \mathcal{H}(G) = 
    \log_2 \#G[2] - \log_2 \# \Aut G + 
    \frac{1}{6}((n-1)^2 + 3(n-1) + 2\#G[3]) - e^{-\Theta(\#G)}
\]
Since $\#G[2] = O(n)$, $\#G[3] = O(n)$ and
$\#\Aut G = O(n \log_2 n)$ (by \Cref{lm:numend}), we get
\[
    \log_2 \#\mathcal{H}(G) = \frac{n^2}{6} + O(n)
\]
Adding these up for all finite abelian groups of order $n-1$ using \Cref{thm:numgroups} gives
\[
    \log_2 \#\mathcal{H}_n = \frac{n^2}{6} + O((\log n)^4) + O(n) = \frac{n^2}{6} + O(n)
    \qedhere
\]
\end{proof}

\section{Beyond the commutative case}\label{sec:questions}
A number of difficulties arise in the attempt to extend our results to skew hyperfields. Before discussing these, we comment on a curious discrepancy between \Cref{conj:100}
and \Cref{cor:100}.
Whereas Baker and Jin's conjecture concerns quotients of fields, we have shown the a priori stronger result that hyperfields are almost never quotients of \emph{skew} fields. We do not know if this is a \emph{genuinely} stronger statement. That is:
\begin{Ques}\label[Ques]{ques:skew}
    Is every hyperfield which is the quotient of some skew field in fact the quotient of a field?
    Is this true if we insist that the hyperfield is finite?
\end{Ques}
The naive extension of 
\Cref{conj:100} to the setting of skew hyperfields holds for a kind of trivial reason, given by the following result.
\begin{Thm}\label[Thm]{thm:0skew}
     Let $\mathcal{S}_n$ be the set of isomorphism classes of skew hyperfields of order $n$, and let $\mathcal{H}_n \subseteq \mathcal{S}_n$ be the subset of those which are  hyperfields. Then
    \[
\frac{\#\mathcal{H}_n}{\#\mathcal{S}_n} = 1-e^{-\Omega(n^2)}.
    \]
\end{Thm}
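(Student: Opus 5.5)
The plan is a direct counting argument. First show that every nonabelian group $G$ of order $m \deq n-1$ carries at most $2^{(\frac{7}{96}+o(1))m^2}$ skew hyperfield structures. Then compare this with the $2^{(\frac{1}{6}-o(1))m^2}$ hyperfields on the cyclic group of order $m$ given by \Cref{thm:count}, and sum over groups using \Cref{thm:numgroups}.

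\textbf{Step 1: reduce to a set of triples.} As in the commutative case, a skew hyperfield $\mathbb{H}$ on $G$ is determined by its unit and by the set
\[
T(\mathbb{H}) = \{(a,b,c) \in G^3 \mid 0 \in a \boxplus b \boxplus c\}.
\]
Indeed, $z \in x \boxplus y$ iff $(x,y,-z) \in T$, by axiom (II) and associativity as in \Cref{rmk:zero}. Axiom (III), applied on both sides, makes $T$ invariant under $(a,b,c) \mapsto (gah, gbh, gch)$ for $(g,h) \in G \times G$. Axiom (I) (commutativity and associativity) makes $T$ invariant under the permutation action of $S_3$. Hence $T$ is a union of orbits of the group $\mathcal{G} \deq S_3 \times G \times G$ on $G^3$. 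So the number of skew hyperfields on $G$ (not even up to isomorphism) is at most $\#G[2]\cdot 2^{\#(G^3/\mathcal{G})} \le m\cdot 2^{\#(G^3/\mathcal{G})}$.

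\textbf{Step 2: count orbits by Burnside.} Burnside gives $\#(G^3/\mathcal{G}) = \frac{1}{6m^2}\sum_{\sigma,g,h}\mathrm{Fix}(\sigma,g,h)$.

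For $\sigma = 1$, the number of fixed triples is $N(g,h)^3$, where $N(g,h) = \#\{a : gah = a\}$. This equals $\#C_G(g)$ if $h^{-1}$ is conjugate to $g$, and $0$ otherwise. Summing over $g,h$ gives $m\sum_g \#C_G(g)^2 = m^3\sum_K 1/\#K$, where $K$ runs over conjugacy classes. So the identity contributes $\frac{m}{6}\sum_K 1/\#K$.

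For $\sigma \ne 1$, a fixed triple is determined by at most one free coordinate $a$ with $gah$-type constraints. For a transposition, for instance, $b = gah$ and $gch = c$. So the number of fixed triples is at most $m\,N(g,h)$ or $m$. Since $\sum_{g,h} N(g,h) = m^2$, these terms contribute only $O(m^3)/m^2 = O(m)$.

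Now use non-commutativity. Each central element is its own class, each noncentral class has size $\ge 2$, and $\#Z(G) \le m/4$ because $G/Z(G)$ is not cyclic. Therefore
\[
\sum_K \frac{1}{\#K} \le \#Z(G) + \frac{m - \#Z(G)}{2} \le \frac{5m}{8}.
\]
This gives $\#(G^3/\mathcal{G}) \le \frac{5}{48}m^2 + O(m)$. I expect the bookkeeping here, bounding the non-identity Burnside terms cleanly, to be the fiddliest point. Any constant strictly below $\frac{1}{6}$ suffices, and $\frac{5}{48} < \frac{8}{48} = \frac{1}{6}$. (Using class sizes $\ge 2$ for noncentral classes more carefully would give the sharper $\frac{7}{96}$, but that is not needed.)

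\textbf{Step 3: compare.} By \Cref{thm:numgroups} there are $e^{O((\log m)^4)}$ groups of order $m$. So the total number of non-commutative skew hyperfields of order $n$ is at most $2^{\frac{5}{48}m^2 + O(m)}$. On the other hand, \Cref{thm:count} applied to the cyclic group of order $m$ gives $\#\mathcal{H}_n \ge 2^{\frac{1}{6}m^2 - O(m\log m)}$, using \Cref{lm:numend} to bound $\#\Aut G$. Hence
\[
\frac{\#\mathcal{S}_n - \#\mathcal{H}_n}{\#\mathcal{S}_n} \le 2^{-(\frac{1}{6}-\frac{5}{48})m^2 + O(m\log m)} = e^{-\Omega(n^2)},
\]
which is the claim.
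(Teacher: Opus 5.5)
Your proposal is correct and follows essentially the same route as the paper. The paper also uses a Burnside count to show that a nonabelian group of order $m$ has at most $\tfrac{5}{48}m^2+O(m)$ hexagons. It then sums over the $e^{O((\log m)^4)}$ groups of order $m$ and compares with the roughly $2^{m^2/6}$ hyperfields, arriving at the same exponent $-m^2/16+O(m)$.

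The differences are cosmetic. You count orbits on $G^3$ under $S_3\times G\times G$, whereas the paper counts normalized pairs under $S_3\times G$ acting by conjugation. You reach the constant $\tfrac58$ through $\#Z(G)\le m/4$ together with a looser class-size estimate. The paper instead uses $\#Z(G)\le m/2$ together with $\#C(g)\le m/2$ for noncentral $g$.
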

We provide a proof of \Cref{thm:0skew} in \Cref{sec:skew}. The ingredients in the proof are twofold. 
The first is simply \Cref{thm:numgroups}, which gives us some control over the number of non-commutative groups of order $n$.
The second is the hexagon framework. It carries over to the non-commutative setting---a hexagon of $G$ being an orbit in $G^3$ under the action of $G \times S_3 \times G$ with the first $G$ acting by left multiplication and the second by right multiplication---with the caveat that hexagons can vary wildly in size and that the total number of hexagons may be quite small in comparison to the abelian case. This complicates any attempt to replicate the arguments in the proof of 
\Cref{prop:prob}.

\Cref{thm:0skew} states that asymptotically all but a vanishingly small proportion of finite skew hyperfields are in fact hyperfields.
Thus almost all skew hyperfields are non-quotient by \Cref{cor:100}.
It may be rightly objected that this answers the wrong question.
For instance, we may wish to restrict ourselves to certain classes of groups (e.g.\ non-commutative finite groups, symmetric groups, finite simple groups etc.). In the commutative case, \Cref{thm:main} provides a complete understanding of the generic behaviour of finite hyperfields with such restrictions. It is desirable to have a similar account for non-commutative groups.
One could also ask for an analogue of \Cref{thm:runner_up} for non-commutative groups.
An analogue of \Cref{thm:noauto} would say that for a generic skew hyperfield on a finite group $G$, all automorphisms are induced by inner automorphisms of $G$.
Finally, it would be interesting to have asymptotics for the number of isomorphism classes of skew hyperfields on a given non-commutative group.

\section*{Acknowledgements}
The authors wish to thank Noah Solomon for bringing the paper \cite{hobby} to our attention, as well as the \emph{Arizona-New Mexico Symposium on Commutative Algebra and its Interactions} for having provided a forum conducive to cooperative discussion. It was there that we met Noah. We are also grateful to Alexander Divoux for having spotted an error in an earlier version of this manuscript.
\printbibliography

\appendix
\section{Products of hyperfields}\label[appendix]{sec:appendix}
Here we discuss products in the categories of pastures and hyperfields in relation to the $0/0$ property. 
The category of pastures is complete and cocomplete, as shown in \cite{creech}.
It follows from the results there that the product of the pastures $\mathbb{P}_1$ and $\mathbb{P}_2$ has underlying group $\mathbb{P}_1^{\times} \times \mathbb{P}_2^{\times}$, unit $\begin{bmatrix} -1\\ -1 \end{bmatrix}$ and hexagons given by
\[
    \Hex(\mathbb{P}_1 \times \mathbb{P}_2) = 
    \left\{\overline{\left(\begin{bmatrix} u_1\\ u_2\end{bmatrix}, \begin{bmatrix}v_1\\ v_2\end{bmatrix}\right)} \;\bigg|\; (u_1, v_1) \in \Hex(\mathbb{P}_1), 
    (u_2, v_2) \in \Hex(\mathbb{P}_2) \right\}
\]
(Here and below, we write 
$\begin{bmatrix}a\\ b\end{bmatrix}$ for the element $(a, b)$ in a product of two groups. This has the advantage of being visually distinct from our notation for fundamental pairs.)

If $\mathbb{H}$ and $\mathbb{G}$ are hyperfields, it is natural to ask when the product 
$\mathbb{H} \times \mathbb{G}$ \emph{in the category of pastures} is again a hyperfield.
If this is so, then by \Cref{prop:fullyfaithful} this product is also their product in the category of hyperfields. We shall prove the following.
\begin{Thm}\label[Thm]{thm:pastureproduct}
    Let $\Hb$ and $\G$ be hyperfields. Then the pasture product $\mathbb{H} \times \mathbb{G}$ of $\Hb$ and $\G$ is a hyperfield if and only if 
    \begin{itemize}
        \item both $\Hb$ and $\G$ are $0/0$, or
        \item $\Hb\cong \K$ or $\G\cong \K$, or
        \item $\Hb\cong \G\cong \F_2.$
    \end{itemize}
    In the first of these cases, the product is again $0/0$.
\end{Thm}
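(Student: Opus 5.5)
The plan is to apply \Cref{prop:hexagon_characterization} to the product pasture directly. The first step is to rewrite conditions (A) and (B) for $\mathbb{H}\times\mathbb{G}$ coordinatewise. Because the $S_3$-action on fundamental pairs is diagonal and hexagon membership is $S_3$-invariant, a fundamental pair $\left(\begin{bmatrix}u_1\\u_2\end{bmatrix},\begin{bmatrix}v_1\\v_2\end{bmatrix}\right)$ lies in a hexagon of the product exactly when $\overline{(u_1,v_1)}\in\Hex(\mathbb{H})$ and $\overline{(u_2,v_2)}\in\Hex(\mathbb{G})$.

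Two local facts about a single hyperfield $\mathbb{H}$, with $S\deq 1\boxplus -1$, do most of the work.
\begin{enumerate}[(i)]
\item $\overline{(-1,y)}\in\Hex(\mathbb{H})$ for some $y$ if and only if $\mathbb{H}$ is not a field. This is \Cref{rmk:norestriction}.
\item For a hexagon pair $(x,y)$, there is a $t$ with $\overline{(tx,-tx)},\overline{(-ty,ty)}\in\Hex(\mathbb{H})$ if and only if $yx^{-1}\in S^{\times}(S^{\times})^{-1}$. Indeed, $\overline{(a,-a)}\in\Hex$ unwinds to $a^{-1}\in S$, up to the harmless sign, since $S=-S$.
\end{enumerate}
I will also use that the ratios $yx^{-1}$ of hexagon pairs $(x,y)$ of $\mathbb{H}$ exhaust $\mathbb{H}^\times$, at least when $\mathbb{H}$ is not a field. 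To realise a ratio $r$, pick a nonzero $s\in 1\boxplus r$; this is possible when $r\neq-1$ by \Cref{rmk:nonempty}, and when $r=-1$ because $\mathbb{H}$ is not a field. Then $x=-s^{-1}$, $y=rx$ works.

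For condition (A), take $x=\begin{bmatrix}x_1\\x_2\end{bmatrix}\neq\begin{bmatrix}-1\\-1\end{bmatrix}$. A partner $y$ exists coordinatewise by (A) for each factor, except in a coordinate equal to $-1$, where fact (i) is needed. So (A) fails exactly when one factor is a field while the other has some element $\neq -1$, i.e.\ a nontrivial group or $\mathbb{K}$.

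For condition (B), take two distinct product pairs. In any coordinate where the two component pairs differ, (B) for that factor supplies $t_i$. In a coordinate where they coincide, say $(x_1,y_1)=(z_1,w_1)$, fact (ii) is required. Distinctness lets the other coordinate be arbitrary, provided $\mathbb{G}$ has at least two distinct fundamental pairs lying in hexagons.

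\emph{Sufficiency.} If both factors are 0/0, then $S^\times(S^\times)^{-1}$ is everything in each, so (ii) always holds; 0/0 also rules out fields, so (A) holds. If one factor is $\mathbb{K}$, I check by hand that its only fundamental pair $(1,1)$ satisfies (i) and (ii), since $S=\{0,1\}$. Then (A) holds, and the coincident-coordinate case of (B) holds in the $\mathbb{K}$-coordinate. In the other coordinate, coincidence forces both product pairs to be equal, so that case never arises. If both factors are $\mathbb{F}_2$, the product pasture is $\mathbb{F}_2$ itself.

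\emph{Necessity.} Suppose the product is a hyperfield and neither factor is $\mathbb{K}$, and not both are $\mathbb{F}_2$. If some factor is a field, (A) forces the other to be $\mathbb{F}_2$; I will then treat $\mathbb{F}_2\times\mathbb{F}$ with $\mathbb{F}$ a larger field separately. Presumably it fails (A), since $\mathbb{F}_2$ has $-1=1$, so the first coordinate of $x$ is always $-1$ and fact (i) is needed there. Otherwise neither factor is a field, so $\mathbb{G}$, not being $\mathbb{K}$ or $\mathbb{F}_2$, has at least two distinct hexagon pairs. Choosing such a pair in the $\mathbb{G}$-coordinate and any hexagon pair $(x_1,y_1)$ repeated in the $\mathbb{H}$-coordinate, (B) forces $y_1x_1^{-1}\in S^\times(S^\times)^{-1}$. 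Since the ratios cover $\mathbb{H}^\times$, $\mathbb{H}$ is 0/0, and symmetrically so is $\mathbb{G}$.

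The final claim is then immediate: $1\boxplus-1$ in the product is $S_{\mathbb{H}}\times S_{\mathbb{G}}$ together with $0$, so its ratio set is the product of the two ratio sets.

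I expect the main obstacle to be the bookkeeping of the degenerate small cases, namely $\mathbb{K}$, $\mathbb{F}_2$, small fields, and factors with only one hexagon pair. In particular I need to verify that $\mathbb{K}$ is the only non-field hyperfield lacking two distinct hexagon pairs, and to settle $\mathbb{F}_2\times\mathbb{F}$ for the remaining fields.
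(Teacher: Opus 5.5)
Your argument is correct. Its engine is the same as the paper's: you read criteria (A) and (B) of \Cref{prop:hexagon_characterization} coordinatewise in the product, and you observe that (B) applied to two product pairs that coincide in one coordinate is exactly the $0/0$ condition in that factor. The difference lies in how the degenerate cases are dispatched.

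The paper handles the cases piecemeal:
\begin{itemize}
\item It disposes of $\mathbb{F}_2$ by counting hexagons (\Cref{lm:numhex}), and of $\mathbb{F}_3$ via (A).
\item It lets (B) eliminate every other field. Using \Cref{lm:distinctpairs}, it picks a hexagon pair $(a,b)$ of $\mathbb{H}$ with $a\neq b$. It forms the two product pairs with $\mathbb{H}$-coordinates $(a,b)$ and $(b,a)$ and common $\mathbb{G}$-coordinate $(c,d)$. Then (B) produces a nonzero element of $1\boxplus-1$ in $\mathbb{G}$, and the involution $(x,y)\mapsto(xy^{-1},y^{-1})$ upgrades this to $0/0$.
\item It obtains the final claim through \Cref{lm:0/0characterization}.
\end{itemize}
You proceed more uniformly:
\begin{itemize}
\item You kill all fields at once through (A), using that $-1$ lies in no hexagon of a field. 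This is the paper's $\mathbb{F}_3$ argument applied to every field.
\item You therefore only run (B) on non-fields. There your surjectivity of the ratio map on hexagon pairs replaces \Cref{lm:distinctpairs}.
\item You get the final claim by computing $1\boxplus-1$ in the product directly.
\end{itemize}
Your route avoids the separate $\mathbb{F}_2$/$\mathbb{F}_3$ cases and \Cref{lm:distinctpairs}. The paper's route extracts non-fieldness and the $0/0$ property from (B) in a single stroke.

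One slip needs fixing. In your analysis of (A), ``i.e.\ a nontrivial group or $\mathbb{K}$'' should read ``i.e.\ a nontrivial group''. Since $\mathbb{K}^\times=\{1\}=\{-1\}$, the product of a field with $\mathbb{K}$ does satisfy (A), and your sufficiency argument relies on exactly this.

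With that correction, the two loose ends you flag close immediately:
\begin{itemize}
\item For $\mathbb{F}_2\times\mathbb{F}$ with $\#\mathbb{F}^\times\ge 2$, (A) fails by your own criterion: take an element whose second coordinate is $\neq -1$.
\item The only non-field on the trivial group is $\mathbb{K}$. So any non-field $\mathbb{G}\neq\mathbb{K}$ has $\#\mathbb{G}^\times\ge 2$. Its hexagon pairs then realize at least two distinct ratios, hence there are at least two distinct pairs.
\end{itemize}
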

To prove \Cref{thm:pastureproduct}, we will need two lemmas.
\begin{Lm}\label[Lm]{lm:distinctpairs}
    Let $\mathbb{H}$ be a hyperfield with $\mathbb{H} \ncong \mathbb{F}_2, \mathbb{F}_3, \mathbb{K}$. Then there exist $x,y \in \mathbb{H}^{\times}$ with $x \neq y$ such that 
    $\overline{(x,y)} \in \Hex(\mathbb{H})$.
\end{Lm}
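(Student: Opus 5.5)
The key observation is that the only hexagon containing no fundamental pair with distinct coordinates is $\overline{(1,1)}$. So the plan is to show that if $\Hex(\mathbb{H}) \subseteq \{\overline{(1,1)}\}$, then $\mathbb{H}$ is one of $\mathbb{F}_2$, $\mathbb{F}_3$, $\mathbb{K}$, using \Cref{prop:hexagon_characterization} to pin down the hyperfield from its pasture.

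\emph{Step 1: the only bad hexagon is $\overline{(1,1)}$.} Let $(u,v)$ be a fundamental pair. By \Cref{df:hexagon} its hexagon contains the pairs $(u,v)$ and $(uv^{-1}, v^{-1})$.
\begin{itemize}
\item If $u \neq v$, we are done.
\item If $u = v$, the second pair is $(1, v^{-1})$, whose coordinates are distinct unless $v = 1$.
\end{itemize}
Hence every hexagon other than $\overline{(1,1)}$ contains a pair $(x,y)$ with $x \neq y$. Note also that the hexagon $\overline{(1,1)}$ consists of the single pair $(1,1)$ repeated six times.

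\emph{Step 2: reduce to $\#G \le 2$.} Suppose, for contradiction, that $\Hex(\mathbb{H}) \subseteq \{\overline{(1,1)}\}$, and write $G = \mathbb{H}^\times$. Condition (A) of \Cref{prop:hexagon_characterization} says that every $x \neq -1$ lies in some pair $(x,y)$ belonging to a hexagon of $\mathbb{H}$. The only available pair is $(1,1)$, so $x = 1$. Therefore $G \subseteq \{1,-1\}$.

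\emph{Step 3: case analysis.} By \Cref{prop:hexagon_characterization}, $\mathbb{H}$ is determined by $(G, -1, \Hex(\mathbb{H}))$. Using the recovery formula for $\boxplus$ from that proposition, together with the fact that $\overline{(x,y,-z)} = \overline{(1,1)}$ forces $x = y = -z$, we get the following.
\begin{itemize}
\item $G$ trivial, $\Hex(\mathbb{H}) = \varnothing$: then $1 \boxplus 1 = \{0\}$, which is $\mathbb{F}_2$.
\item $G$ trivial, $\Hex(\mathbb{H}) = \{\overline{(1,1)}\}$: then $1 \boxplus 1 = \{0,1\}$, which is $\mathbb{K}$.
\item $G = \{1,g\}$ with $g \neq 1$ and $-1 = 1$: condition (A) fails at $x = g$, because $g \neq -1$ but $g \neq 1$. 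So this case cannot occur.
\item $G = \{1,-1\}$ with $-1 \neq 1$, $\Hex(\mathbb{H}) = \varnothing$: then $1 \boxplus 1 = \varnothing$, contradicting \Cref{rmk:nonempty}.
\item $G = \{1,-1\}$ with $-1 \neq 1$, $\Hex(\mathbb{H}) = \{\overline{(1,1)}\}$: then $1 \boxplus 1 = \{-1\}$ and $1 \boxplus -1 = \{0\}$. The latter holds because $\overline{(1,-1,-z)}$ would need all three coordinates equal, which is impossible. This is exactly $\mathbb{F}_3$.
\end{itemize}
Every surviving case is one of the excluded hyperfields, which is the desired contradiction.

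The main point requiring care is Step 1, which classifies the hexagons with only equal-coordinate pairs. After that, the rest is bookkeeping with \Cref{prop:hexagon_characterization}.
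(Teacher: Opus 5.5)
Your proof is correct and follows essentially the same route as the paper's: you use the pair $(uv^{-1},v^{-1})=(1,v^{-1})$ to reduce to the case $\Hex(\mathbb{H})\subseteq\{\overline{(1,1)}\}$, then condition (A) forces $\mathbb{H}^\times\subseteq\{1,-1\}$, leaving only $\mathbb{F}_2$, $\mathbb{K}$, $\mathbb{F}_3$. The differences are minor. You handle the empty-nullset case ($\mathbb{F}_2$) inside the explicit case analysis, whereas the paper dismisses it in one line at the start; and your third bullet ($-1=1$, $\#G=2$) is already excluded by Step 2, so it is redundant but harmless.
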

\begin{proof}
    Since $\mathbb{H}$ is not $\mathbb{F}_2$, it must have some fundamental pair.
    If $\mathbb{H}$ has a fundamental pair
    $(x,y) \neq (1,1)$ then either $x \neq y$---in which case we are done---or else $x = y$, in which case both $(1, x^{-1})$
    and $(1,y^{-1})$ are fundamental pairs in $\mathbb{H}$ and at least one of these has the desired form.
    Suppose $(1,1)$ is the only fundamental pair in $\Hex(\mathbb{H})$. By criterion (A) in \Cref{prop:hexagon_characterization}, any element in $\mathbb{H} \setminus \{0,-1\}$ lies in some fundamental pair. Thus $\mathbb{H} = \{0,1,-1\}$ and its only fundamental pair is $(1,1)$.
    Depending on whether $1 = -1$ or not, this gives $\mathbb{H} \cong \mathbb{K}$ or $\mathbb{F}_3$.
\end{proof}
The following is a variant on \Cref{prop:hexagon_characterization}.
\begin{Lm}\label[Lm]{lm:0/0characterization}
    A pasture $\Pb$ on a group $G$ arises from a 0/0 hyperfield if and only if the following two conditions hold:
    \begin{enumerate}[(A)]
        \item For all $x \in G$ there exists $y \in G$
        such that ${(x,y)} \in \Hex(\mathbb{P})$.
        \item For all $(x, y),{(z, w)} \in \Hex(\Pb)$ there exists $t \in G$ such that ${(tx, -tz)}, {(-ty, tw)} \in \Hex(\Pb)$.
    \end{enumerate}
\end{Lm}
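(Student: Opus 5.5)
The plan is to compare this lemma directly with \Cref{prop:hexagon_characterization}. The new conditions differ from the old (A) and (B) in two ways. In (A) the exceptional value $x=-1$ is no longer excluded. In (B) the hypothesis $(x,y)\neq(z,w)$ is dropped. Throughout I read $(x,y)\in\Hex(\Pb)$ as $\overline{(x,y)}\in\Hex(\Pb)$. So it suffices to show two things for a hyperfield $\Hb$ with pasture $\Pb$: first, the two new conditions together already imply that $\Pb$ is a hyperfield, which is immediate since they strengthen the old ones; second, given that $\Pb$ is a hyperfield, the extra instances of (A) and (B) are jointly equivalent to the $0/0$ property.

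\textbf{Translating the extra instance of (A).} This instance is $x=-1$: there is some $y$ with $\overline{(-1,y)}\in\Hex(\Pb)$. Unwinding as in the proof of \Cref{prop:4fullpasture}, $\overline{(-1,y)}\in\Hex$ means $0\in -1\boxplus y\boxplus 1$, i.e.\ $-y\in 1\boxplus -1$. So the condition says $1\boxplus-1$ contains a nonzero element, i.e.\ $\Hb$ is not a field (\Cref{rmk:norestriction}). This is clearly necessary for $0/0$, since we need some $s\neq0$ in $1\boxplus -1$.

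\textbf{Translating the extra instances of (B).} These are the cases $(z,w)=(x,y)$ with $\overline{(x,y)}\in\Hex(\Pb)$. The required conclusion is a $t$ with $\overline{(tx,-tx)}$ and $\overline{(-ty,ty)}\in\Hex$. By the computation in the proof of \Cref{prop:4fullpasture}, $\overline{(a,-a)}\in\Hex$ if and only if $a^{-1}\in S\deq (1\boxplus -1)\setminus\{0\}$; note that $S$ is closed under inversion and negation. So the condition becomes: there is $t$ with $t^{-1}x^{-1}\in S$ and $t^{-1}y^{-1}\in S$, i.e.\ $xy^{-1}\in SS^{-1}$. Hence the extra (B) says that $xy^{-1}\in SS^{-1}$ for every fundamental pair $(x,y)$ of $\Hb$.

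\textbf{Equivalence with $0/0$.} The $0/0$ property is exactly $SS^{-1}=G$ once we also account for $0=0\cdot s^{-1}$, which needs $S\neq\varnothing$. This is guaranteed by the extra (A). So $0/0$ implies both extra conditions.

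For the converse, we must show $SS^{-1}=G$ knowing only that $xy^{-1}\in SS^{-1}$ for fundamental pairs $(x,y)$, together with $S\neq\varnothing$. This is the main obstacle, since it amounts to showing that every $g\in G$ is a ratio $xy^{-1}$ of some fundamental pair. My first approach is as follows. Fix $g\in G$. If $g\neq -1$, apply (A) of \Cref{prop:hexagon_characterization} to $x=-g$ (assuming $-g\neq-1$) to get a pair $(-g,y)$ with $\overline{(-g,y)}\in\Hex$. Then pass within the hexagon to the pair $(uv^{-1},v^{-1})$, whose ratio is $u$. In other words, the hexagon of $(u,v)$ contains a pair with ratio $u$. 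Since every $u\neq -1$ appears as a first coordinate of a fundamental pair, every $u\neq-1$ is a ratio, so $u\in SS^{-1}$.

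The remaining cases are $u=-1$, and possibly $u=1$ if it is not yet covered. Here I plan to use that $S=-S$: picking $s\in S$, we get $-1=(-s)s^{-1}\in SS^{-1}$. Also $1=ss^{-1}\in SS^{-1}$. Hence $SS^{-1}=G$, and $\Hb$ is $0/0$. I expect this last step, together with the bookkeeping for the degenerate case $u=-1$, to be the only delicate point.
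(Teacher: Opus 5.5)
Your proof is correct and follows the paper's argument. Like the paper, you reduce to \Cref{prop:hexagon_characterization}, read the extra instance of (A) as $1\boxplus -1\neq\{0\}$ and the extra instances of (B) as $xy^{-1}\in SS^{-1}$ for all fundamental pairs, and use the involution $(u,v)\mapsto(uv^{-1},v^{-1})$ to make every first coordinate a ratio; the only cosmetic difference is that you handle $u=-1$ via $S=-S$ rather than invoking the strengthened (A) directly, and your side remark that $S$ is closed under inversion is false in general (in $\mathbb{T}_0$ one has $S=(0,1]$) but harmless, since you only ever use the automatic inversion-closure of $SS^{-1}$.
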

\begin{proof}
We compare each condition to its analogue in \Cref{prop:hexagon_characterization}. For condition (A), the strengthening is that we allow $x = -1$. But this is true for any non-field hyperfield by \Cref{rmk:norestriction}. It is clear that fields cannot be 0/0. For condition (B), the strengthening is precisely that of allowing $(x,y) = (z,w)$. That is, we are insisting that if $(x,y) \in \Hex(\mathbb{P})$ then there exists $t \in G$ such that $(tx, -tx), (-ty, ty) \in \Hex(\mathbb{P})$. In hyperfield terms, this is the statement that 
$1 \boxplus -1 \ni t^{-1} x^{-1}, t^{-1} y^{-1}$.  
This is precisely the statement that there exist $r, s \in 1 \boxplus -1 \setminus \{0\}$ such that $rs^{-1} = xy^{-1}$. This must hold for all fundamental pairs $(x,y)$.
Using the involution $(x, y) \mapsto (xy^{-1}, y^{-1})$ on fundamental pairs, we see that the condition becomes that whenever $(x, y) \in \Hex(\mathbb{P})$ then there exist $r,s \in 1 \boxplus -1 \setminus \{0\}$ such that 
\[
    rs^{-1} = (xy^{-1})(y^{-1})^{-1} = x
\]
Assuming (A) holds, every element of $\mathbb{P}^{\times}$ lies in some fundamental pair. Thus we have recovered precisely the $0/0$ condition (apart from the trivial case $x = 0$).
\end{proof}

\begin{proof}[Proof of \Cref{thm:pastureproduct}]
    If $\mathbb{H} \cong \mathbb{F}_2$ then $\mathbb{H} \times \mathbb{G}$ has cardinality 
    equal to $\#\mathbb{G}$ but no hexagons (because $\mathbb{F}_2$ doesn't have any). By \Cref{lm:numhex}, this forces 
    $\#\mathbb{G} \leq 2$. This implies that $\mathbb{G}$ is isomorphic to one of $\mathbb{F}_2$ or $\mathbb{K}$. 
    Suppose now that neither of $\mathbb{H}, \mathbb{G}$
    is isomorphic to either of
    $\mathbb{K}, \mathbb{F}_2$, but that $\mathbb{H}$ is isomorphic to $\mathbb{F}_3$. Let $x \in \mathbb{G}$ satisfy $x \neq 0, -1$. Since in $\mathbb{F}_3$ the element $-1$ does not lie in any hexagon, the element 
    $\begin{bmatrix}
        -1 \\ x
    \end{bmatrix}$ does not lie in any hexagon of $\mathbb{H} \times \mathbb{G}$ but is not the unit of this pasture. By criterion (A) of \Cref{prop:hexagon_characterization}, this shows that $\mathbb{H} \times \mathbb{G}$ is not a hyperfield.

    Now suppose that $\mathbb{H}, \mathbb{G}$ are not isomorphic to any of 
    $\mathbb{K}, \mathbb{F}_2, \mathbb{F}_3,$ but their product is a hyperfield.
    We will show that they must be $0/0$. It suffices to do this for $\mathbb{G}$.
    By \Cref{lm:distinctpairs}, we can find $(a,b) \in \Hex(\mathbb{H})$ such that $a \neq b$.
    Let $(c,d) \in \Hex(\mathbb{G})$ be any fundamental pair.
    Then
    \[
       \left(
       \begin{bmatrix}
           a \\ c
       \end{bmatrix},
        \begin{bmatrix}
           b \\ d
       \end{bmatrix}
       \right)
       \quad \text{and} \quad
       \left(
       \begin{bmatrix}
           b \\ c
        \end{bmatrix},
        \begin{bmatrix}
           a \\ d
       \end{bmatrix}
       \right)
    \]
    are distinct fundamental pairs of 
    $\mathbb{H} \times \mathbb{G}$.
    By criterion (B) in \Cref{prop:hexagon_characterization}, we can find $t \in \mathbb{H}^{\times}, u \in \mathbb{G}^{\times}$ such that 
    \[
       \left(
       \begin{bmatrix}
           ta \\ uc
       \end{bmatrix},
        \begin{bmatrix}
           -tb \\ -uc
       \end{bmatrix}
       \right)
       \quad \text{and} \quad
       \left(
       \begin{bmatrix}
           -tb \\ -ud
        \end{bmatrix},
        \begin{bmatrix}
           ta \\ ud
       \end{bmatrix}
       \right)
    \]
    are both fundamental pairs of $\mathbb{H} \times \mathbb{G}$. Looking just at the bottom row, the condition is that 
    $u^{-1} c^{-1}$ and 
    $u^{-1}d^{-1}$ both lie in 
    the sum $1 \boxplus -1$ in $\mathbb{G}$. In particular, in $\mathbb{G}$ we can find $r, s \in 1 \boxplus -1$ with $s \neq 0$ such that $rs^{-1} = cd^{-1}$. Note that $0 \neq s\in 1\bp -1$ implies that $\mathbb{G}$ is not a field. Now let $x \in \mathbb{G}^{\times}$ be arbitrary. By criterion (A) of \Cref{prop:hexagon_characterization} and \Cref{rmk:norestriction}, there is some fundamental pair $(x,y) \in \Hex(\mathbb{G})$.
    Then $(xy^{-1}, y^{-1})$ is also a fundamental pair of $\mathbb{G}$. Via the identifications 
    $c \rightsquigarrow xy^{-1}$, $d \rightsquigarrow y^{-1}$, we see that for some $r, s \in 1 \boxplus -1$ we have 
    $rs^{-1} = x$. This shows that the 0/0 condition holds (aside from the trivial case $x=0$).
    
    It remains to show that the products in the announced cases are actually hyperfields.
    The pasture product isomorphisms
    $\mathbb{F}_2 \times \mathbb{F}_2 \cong \mathbb{F}_2$ 
    and $\mathbb{P} \times \mathbb{K} \cong \mathbb{P}$ (for all pastures $\mathbb{P}$) are easily verified. This shows that we get a hyperfield in the second and third cases.
    
    Suppose now that $\mathbb{H}, \mathbb{G}$ are 0/0 hyperfields. We show that $\mathbb{H} \times \mathbb{G}$ is too.
    First we check that it satisfies criterion (A) in \Cref{lm:0/0characterization}. 
    Let $\begin{bmatrix} h\\ g\end{bmatrix} \in \mathbb{H}^{\times} \times \mathbb{G}^{\times}$.
    By the same property for $\mathbb{H}$ and $\mathbb{G}$, we can find $u \in \mathbb{H}^{\times}$, $v \in \mathbb{G}^{\times}$ such that 
    $(h, y) \in \Hex(\mathbb{H})$
    and $(g,z) \in \Hex(\mathbb{G})$.
    Then 
    \[
        \left(
        \begin{bmatrix}
        h \\ g
        \end{bmatrix},
        \begin{bmatrix}
        y \\ z
        \end{bmatrix}
        \right) \in \Hex(\mathbb{H} \times \mathbb{G})
    \]
    For property (B), suppose 
     \[
       \left(
       \begin{bmatrix}
           x_1 \\ x_2
       \end{bmatrix},
        \begin{bmatrix}
           y_1 \\ y_2
       \end{bmatrix}
       \right)
       \quad \text{and} \quad
       \left(
       \begin{bmatrix}
           z_1 \\ z_2
        \end{bmatrix},
        \begin{bmatrix}
           w_1 \\ w_2
       \end{bmatrix}
       \right)
    \]
    are fundamental pairs of $\mathbb{G} \times \mathbb{H}$. Then 
    $(x_i, y_i), (z_i, w_i)$ are fundamental pairs of 
    $\mathbb{H}$ and $\mathbb{G}$ for $i = 1, 2$ respectively. By property (B) for these hyperfields, we can find $t_1 \in \mathbb{H}^{\times}$ and 
    $t_2 \in \mathbb{G}^{\times}$ such that 
    $(t_i x_i, -t_i z_i), (-t_i y_i, t_i w_i)$ are fundamental pairs in the appropriate hyperfields.
    Thus,
    \[
       \left(
       \begin{bmatrix}
           t_1 \\ t_2
       \end{bmatrix}\cdot
       \begin{bmatrix}
           x_1 \\ x_2
       \end{bmatrix},
       -\begin{bmatrix}
           t_1 \\ t_2
       \end{bmatrix}\cdot
        \begin{bmatrix}
           z_1 \\ z_2
       \end{bmatrix}
       \right)
       \quad \text{and} \quad
       \left(
       -\begin{bmatrix}
           t_1 \\ t_2
       \end{bmatrix}\cdot
       \begin{bmatrix}
           y_1 \\ y_2
        \end{bmatrix},
        \begin{bmatrix}
           t_1 \\ t_2
       \end{bmatrix}\cdot
        \begin{bmatrix}
           w_1 \\ w_2
       \end{bmatrix}
       \right)
    \]
    are fundamental pairs in $\mathbb{H} \times \mathbb{G}$. This gives property (B) for the product.
\end{proof}
\begin{Rmk}
    \Cref{thm:pastureproduct} also holds with ``pasture'' replaced with ``tract'' in the sense of \cite{baker19}, though we shall not comment further on this here.
\end{Rmk}
In contradistinction to the above result, we observe that the product of two hyperfields may exist in the category of hyperfields without coinciding with the pasture product.
To see this, we need the following proposition.
\begin{Prop}\label[Prop]{prop:HtoF}
    Let $\mathbb{H}$ be a (skew) hyperfield and $\mathbb{F}$ a (skew) field. If a homomorphism 
    $\mathbb{H} \to \mathbb{F}$ exists, then $\mathbb{H}$ is a (skew) field.
\end{Prop}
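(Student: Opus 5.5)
By \Cref{rmk:check1-1}, it suffices to show that $1 \boxplus -1 = \{0\}$ in $\mathbb{H}$. Once this holds, addition in $\mathbb{H}$ is single-valued, and by \Cref{ex:field=hyper} $\mathbb{H}$ is then a (skew) field, commutative exactly when $\mathbb{H}$ is a hyperfield. So let $f : \mathbb{H} \to \mathbb{F}$ be a homomorphism, extended by $f(0) = 0$, and suppose $x \in 1 \boxplus -1$. The plan is to push $x$ forward along $f$ and then pull the resulting constraint back along the group homomorphism.

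The key step is the computation
\[
    f(x) \in f(1 \boxplus -1) \subseteq f(1) \boxplus f(-1) = 1 \boxplus -1 = \{1 + (-1)\} = \{0\}.
\]
Here we use that $f(1) = 1$ and $f(-1) = -1$, which follow from the remark after the definition of homomorphism, together with the fact that addition in $\mathbb{F}$ is single-valued. Thus $f(x) = 0$.

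It remains to see that this forces $x = 0$. By definition, $f$ is a group homomorphism $\mathbb{H}^\times \to \mathbb{F}^\times$ extended by $0 \mapsto 0$, so every nonzero element maps into $\mathbb{F}^\times$ and in particular not to $0$. Hence $x = 0$, and therefore $1 \boxplus -1 \subseteq \{0\}$. By axiom (II) we also have $0 \in 1 \boxplus -1$, so $1 \boxplus -1 = \{0\}$.

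There is no real obstacle in this argument. The only points needing care are, first, that \Cref{rmk:check1-1} is stated for skew hyperfields as well, so the noncommutative case goes through verbatim, and second, that the conclusion ``skew field'' is the intended reading of \Cref{ex:field=hyper}.
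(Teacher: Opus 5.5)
Your proof is correct and is the paper's argument: the paper argues by contrapositive that if $\mathbb{H}$ is not a (skew) field then, by \Cref{rmk:check1-1}, $1 \boxplus -1$ contains a nonzero element, whose image would be a nonzero element of $1 \boxplus -1 = \{0\}$ in $\mathbb{F}$. You run the same pushforward along $f$ directly, using that $f$ sends nonzero elements to nonzero elements, and then conclude via \Cref{rmk:check1-1} and \Cref{ex:field=hyper}.
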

\begin{proof}
    Suppose that $\mathbb{H}$ is not a (skew) field.
    By \Cref{rmk:check1-1},
    the sum $1 \boxplus -1$ in $\mathbb{H}$ contains some nonzero element. If there were a morphism $\mathbb{H} \to \mathbb{F}$, the sum $1 \boxplus -1$ in 
    $\mathbb{F}$ would have to contain a nonzero element.
\end{proof}

From this observation, we deduce:
\begin{Prop}
    Let $p$ be a prime number and 
    $\mathbb{F}_p$ the corresponding prime field. 
    Then the product of $\mathbb{F}_p$ with itself in the category of hyperfields is again $\mathbb{F}_p$.
\end{Prop}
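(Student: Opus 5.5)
We verify the universal property directly: $\mathbb{F}_p$, with both projections equal to the identity $\mathrm{id}\colon \mathbb{F}_p \to \mathbb{F}_p$, is a product of $\mathbb{F}_p$ with itself in the category of hyperfields. So let $\mathbb{H}$ be any hyperfield with homomorphisms $f_1, f_2 \colon \mathbb{H} \to \mathbb{F}_p$. We must produce a unique $h\colon \mathbb{H} \to \mathbb{F}_p$ with $\mathrm{id}\circ h = f_1$ and $\mathrm{id}\circ h = f_2$. Uniqueness is automatic, since $h$ must equal $f_1$. Existence is equivalent to showing that $f_1 = f_2$.

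By \Cref{prop:HtoF}, the existence of $f_1$ forces $\mathbb{H}$ to be a field. By \Cref{ex:field=hyper} (the full faithfulness of the embedding of fields into hyperfields), $f_1$ and $f_2$ are then ordinary ring homomorphisms between fields: they preserve $0$, $1$, multiplication and the single-valued addition. A ring homomorphism from a field is injective, so $\mathbb{H}$ embeds in $\mathbb{F}_p$. Since $\mathbb{F}_p$ has no proper subfields, $\mathbb{H} \cong \mathbb{F}_p$.

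It remains to show that $\mathrm{Hom}(\mathbb{F}_p, \mathbb{F}_p)$ is a single map. Any ring homomorphism sends $1 \mapsto 1$, hence $1+\cdots+1 \mapsto 1+\cdots+1$. Every element of $\mathbb{F}_p$ is such a sum, so the identity is the only endomorphism. Transporting along the isomorphism $\mathbb{H} \cong \mathbb{F}_p$, we conclude that $f_1 = f_2$, and we take $h = f_1$.

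There is one subtlety to check: hyperfield homomorphisms between fields are exactly ring homomorphisms. This holds because $f(a\boxplus b)\subseteq f(a)\boxplus f(b)$ with singleton sums means $f(a+b) = f(a)+f(b)$, and $f(1)=1$ because $f$ is a group homomorphism. This step is straightforward, so the main content is the invocation of \Cref{prop:HtoF}. Note that this product differs from the pasture product $\mathbb{F}_p\times\mathbb{F}_p$, whose underlying group has order $(p-1)^2$; this is the contrast being illustrated.
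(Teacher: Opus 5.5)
Your proof is correct and follows the paper's argument. \Cref{prop:HtoF} forces the domain to be a field, field homomorphisms are embeddings, and $\mathbb{F}_p$ is prime, so any two maps into $\mathbb{F}_p$ agree; you also spell out two points the paper leaves implicit, namely the transport along $\mathbb{H}\cong\mathbb{F}_p$ and the check that hyperfield maps between fields are ring maps.

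One small correction to your closing side remark: for $p=2$ the pasture product $\mathbb{F}_2\times\mathbb{F}_2$ is isomorphic to $\mathbb{F}_2$, as noted in the proof of \Cref{thm:pastureproduct}. So the contrast you draw only holds for odd $p$.
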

\begin{proof}
    By \Cref{prop:HtoF}, any homomorphism into $\mathbb{F}_p$ has domain a field. Since any field homomorphism is an embedding and since $\mathbb{F}_p$ is prime, the only hyperfield homomorphism into $\mathbb{F}_p$ is the identity
    $\mathbb{F}_p \to \mathbb{F}_p$. From this, the claim follows.
\end{proof}
Note that no field is ever 0/0.

On the other hand, categorical products may fail to exist in the category of hyperfields.
\begin{Prop}
    Let $p, q$ be relatively prime \emph{prime powers}.
    Then the product of 
    $\mathbb{F}_p$ and 
    $\mathbb{F}_q$ does not exist in the category of hyperfields.
\end{Prop}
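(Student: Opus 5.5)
The plan is to derive a contradiction from the mere existence of the two projection maps; the universal property will not be needed. Suppose a product $\mathbb{P}$ of $\mathbb{F}_p$ and $\mathbb{F}_q$ exists in the category of hyperfields. Then it comes with projection homomorphisms $\pi_1 : \mathbb{P} \to \mathbb{F}_p$ and $\pi_2 : \mathbb{P} \to \mathbb{F}_q$. The key input is \Cref{prop:HtoF}: since $\mathbb{P}$ admits a homomorphism into a field, $\mathbb{P}$ must itself be a field.

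Next, I use that the embedding of fields into hyperfields described after \Cref{ex:field=hyper} is fully faithful. Hence $\pi_1$ and $\pi_2$ are genuine field homomorphisms. A field homomorphism is injective and preserves the prime subfield, so both maps are embeddings of $\mathbb{P}$ into finite fields. In particular $\mathbb{P}$ is a finite field, and its characteristic equals both $\operatorname{char}\mathbb{F}_p$ and $\operatorname{char}\mathbb{F}_q$.

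Now write $p = \ell^a$ and $q = m^b$ with $\ell, m$ prime and $a, b \geq 1$. Since $p$ and $q$ are relatively prime, $\ell \neq m$. So $\mathbb{P}$ would need to have two different characteristics, which is a contradiction. Therefore no product exists.

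The only delicate point is the claim that a hyperfield morphism between fields is a field homomorphism, that is, that it respects addition. This is exactly the full faithfulness asserted when the embedding of fields into hyperfields was introduced. Concretely, $f(a+b) \in f(a) \boxplus f(b) = \{f(a)+f(b)\}$, and $f(0)=0$, $f(1)=1$. With that, the argument is a two-line consequence of \Cref{prop:HtoF}.
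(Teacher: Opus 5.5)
Your proof is correct and follows essentially the same route as the paper: \Cref{prop:HtoF} forces any hyperfield mapping to a field to be a field itself (so in particular a putative product, via its projections), and a field cannot map into fields of two different characteristics. Your explicit check that hyperfield morphisms between fields are additive only spells out the full faithfulness the paper takes for granted.
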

\begin{proof}
    As above, we note that any hyperfield homomorphism into either of $\mathbb{F}_p$ or $\mathbb{F}_q$ has domain a field by \Cref{prop:HtoF}.
    Since a homomorphism of fields is only possible within a fixed characteristic, there exists no hyperfield which maps into both $\mathbb{F}_p$ and $\mathbb{F}_q$. It follows that these two cannot have a product in the hyperfield category.
\end{proof}

\section{The sparsity of finite skew hyperfields}\label[appendix]{sec:skew}
Here we sketch a proof of \Cref{thm:0skew}. The proof proceeds by mimicking the hexagon framework of \Cref{sec:notation}, with subtle but important differences.

The following is the analogue of \Cref{df:hexagon}.
\begin{Df}\label[Df]{df:skewhexagon}
    Let $G$ be a group. The combined effects of
    left and right multiplication by $G$ on $G^3$ and the permutation action of $S_3$ on $G^3$ may be encapsulated in the action of the group $G \times S_3 \times G$ on $G^3$. An orbit of this action is called a \emph{hexagon} of $G$.
    We write $\hexagon(G)$ for the collection of hexagons of $G$.
    Using the normalization map 
    $(x, y, z) \mapsto (xz^{-1}, yz^{-1})$, we may identify $\hexagon(G)$ with the set of orbits of $G^2$ under the action of $S_3 \times G$---where $G$ acts by conjugation. We write 
    $\overline{(x,y)}$ for the hexagon thus associated to the pair $(x,y) \in G^2$.
\end{Df}
As in the commutative case, it follows from axioms (I) and (III) in \Cref{df:hyperfield} that for a skew hyperfield $\mathbb{H}$ the set 
\[
    \{(a,b,c) \in \mathbb{H}^{\times} \mid 0 \in a \boxplus b \boxplus c\}
\]
is invariant under the left and right multiplicative actions of 
$(\mathbb{H}^{\times})^3$ as well as the permutation action of $S_3$. 
We ought therefore to consider the corresponding set of hexagons of 
$\mathbb{H}^{\times}$.
These will be called the hexagons of $\mathbb{H}$.
This data may be encapsulated as follows (c.f.\ \Cref{df:pasture}).
\begin{Df}
    A \emph{skew pasture} $\mathbb{P}$ consists of a triple $\mathbb{P} = (G, \epsilon, \Hex(\mathbb{P}))$ where 
    \begin{itemize}
        \item $G$ is a group, called the \emph{underlying group} of $\mathbb{P}$ and denoted 
        $\mathbb{P}^{\times}$.
        \item $\epsilon \in G$ is an element in the center of $G$ satisfying $\epsilon^2 = 1$, called the \emph{unit} of $\mathbb{P}$.
        \item $\Hex(\mathbb{P}) \subseteq \hexagon(G)$ is a subset of hexagons of $G$, called the \emph{nullset} of $\mathbb{P}$.
    \end{itemize}
    A morphism of skew pastures is defined as in \Cref{df:pasturemorphism}.
\end{Df}
As we remarked above, every skew hyperfield determines a pasture consisting of (i) its multiplicative group, (ii) its unit and (iii) its collection of hexagons. 
We omit the straightforward proof of the following proposition.
\begin{Prop}\label[Prop]{prop:skewfullyfaithful}
    A skew hyperfield is determined by its skew pasture. 
    A homomorphism of hyperfields induces a morphism of the corresponding skew pastures---and every morphism between these pastures arises in this way.
    This induces a fully faithful embedding of the category of skew hyperfields into the category of skew pastures.
\end{Prop}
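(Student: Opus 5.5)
The plan is to mimic the commutative proofs of \Cref{prop:hexagon_characterization} and \Cref{prop:fullyfaithful}, now with two-sided multiplication. The key observation is that the set $N(\mathbb{H}) \deq \{(a,b,c) \in (\mathbb{H}^{\times})^3 \mid 0 \in a \boxplus b \boxplus c\}$ is a union of $(G \times S_3 \times G)$-orbits. Hence it is exactly the preimage of $\Hex(\mathbb{H})$ under the orbit map $G^3 \to \hexagon(G)$, and determining $\Hex(\mathbb{H})$ is the same as determining $N(\mathbb{H})$.

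\emph{Step 1: the pasture determines the skew hyperfield.} Fix $\mathbb{H}$. By \Cref{rmk:zero} (whose derivation uses only (I)--(III) and is valid verbatim in the skew setting), sums with $0$ are forced. For $x,y \in G$ and $z \in G$, I will show
\[
z \in x \boxplus y \iff 0 \in x \boxplus y \boxplus -z \iff (x,y,-z) \in N(\mathbb{H}).
\]
The first equivalence is (II) together with associativity. Indeed, $0 \in A \boxplus -z$ iff $z \in A$, since $0 \in a \boxplus -z$ iff $a = z$, and this uses that $-1$ is central. Whether $0 \in x \boxplus y$ is decided by (II) and the unit $\epsilon$. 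So $x \boxplus y$ is recovered from $(G,\epsilon,\Hex(\mathbb{H}))$ by the same casewise formula as in the proof of \Cref{prop:hexagon_characterization}. Multiplication and $-1$ are part of the data, so $\mathbb{H}$ is determined.

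\emph{Step 2: homomorphisms induce morphisms.} Let $f : \mathbb{H} \to \mathbb{G}$ be a homomorphism. Then $f(-1) = -1$, because $0 \in 1 \boxplus -1$ gives $0 = f(0) \in 1 \boxplus f(-1)$, and (II) then forces $f(-1) = -1$. If $(a,b,c) \in N(\mathbb{H})$, then $-c \in a \boxplus b$. Applying $f$ gives $-f(c) \in f(a) \boxplus f(b)$, so $(f(a),f(b),f(c)) \in N(\mathbb{G})$. Since $f$ is a group homomorphism it is compatible with the orbit maps, so it sends hexagons in $\Hex(\mathbb{H})$ into $\Hex(\mathbb{G})$.

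\emph{Step 3: morphisms come from homomorphisms.} Conversely, let $f$ be a pasture morphism with $f(\epsilon_{\mathbb{H}}) = \epsilon_{\mathbb{G}}$, and take $z \in x \boxplus y$ with $x,y \in G$.
\begin{itemize}
\item If $z \neq 0$, then $(x,y,-z) \in N(\mathbb{H})$, so $(f(x),f(y),-f(z)) \in N(\mathbb{G})$. By the Step 1 equivalence, $f(z) \in f(x) \boxplus f(y)$.
\item If $z = 0$, then $y = -x$, so $f(y) = -f(x)$ and $0 \in f(x) \boxplus f(y)$.
\item Sums involving $0$ are trivial.
\end{itemize}
Thus $f$ is a homomorphism. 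Together with Step 2, the two constructions are mutually inverse on underlying maps, which gives the fully faithful embedding; functoriality is clear.

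The only point requiring care is the noncommutative bookkeeping in Step 1. One must check that $N(\mathbb{H})$ really is invariant under left and right multiplication, which uses both halves of axiom (III), and that $-1$ is central so that $-z$ commutes past everything. I expect this to be routine rather than a true obstacle.
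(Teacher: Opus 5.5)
Your proof is correct, and it is the straightforward argument the paper omits. Step 1 transplants the recovery formula for $\boxplus$ from the proof of \Cref{prop:hexagon_characterization}, and Steps 2 and 3 show that homomorphisms and pasture morphisms are the same group maps, as in \Cref{prop:fullyfaithful}. One minor quibble: the equivalence $0 \in a \boxplus -z \iff a = z$ uses only $(-1)^2 = 1$, not the centrality of $-1$.
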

The key observation in the proof of \Cref{thm:0skew} is the following.
\begin{Prop}\label[Prop]{prop:5/8}
    Let $G$ be a non-commutative 
    group of order $n$. Then $G$ has at most 
    $\frac{1}{6}(5n^2/8 + 5n)$ hexagons.
\end{Prop}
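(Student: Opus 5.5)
We count hexagons by Burnside's Lemma, as in \Cref{prop:count}, but now for the group $S_3 \times G$ acting on $G^2$, with $G$ acting by simultaneous conjugation. By Burnside,
\[
    \#\hexagon(G) = \frac{1}{6n}\sum_{(\sigma, g) \in S_3 \times G} \#\mathrm{Fix}(\sigma, g),
\]
and it suffices to bound the total number of fixed points by $n\cdot(5n^2/8 + 5n)$. We split the sum according to whether $\sigma$ is the identity, a transposition or a 3-cycle.

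For $\sigma = 1$, the fixed points of $(1, g)$ are the pairs $(x,y)$ with both $x$ and $y$ commuting with $g$, so their number is $\#C_G(g)^2$. Summing over $g$ gives $\sum_g \#C_G(g)^2$. Now $\sum_g \#C_G(g) = n\cdot k(G)$, where $k(G)$ is the number of conjugacy classes. We use the classical $5/8$ bound: for non-commutative $G$, the commuting probability is at most $5/8$. In particular $\#C_G(g) \le n/2$ for $g \notin Z(G)$, and $\#Z(G) \le n/4$ since $G/Z(G)$ is not cyclic. The sharper estimate needed is for the second moment. I would write
\[
    \sum_g \#C_G(g)^2 \le n\cdot\#Z(G)\cdot n + (n - \#Z(G))\cdot\tfrac{n}{2}\cdot n,
\]
which is at most $n^3(z + (1-z)/2)$ with $z = \#Z(G)/n \le 1/4$, and hence at most $\tfrac{5}{8}n^3$. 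This is exactly the source of the constant $5/8$, and it is the main step. Dividing by $6n$ gives the term $\tfrac{1}{6}\cdot\tfrac{5n^2}{8}$.

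For the remaining elements we need fixed-point counts totalling at most $5n^2$, i.e.\ $O(n)$ average per $g$. Take a transposition, say $(x,y)\mapsto(y,x)$, combined with conjugation by $g$. Its fixed points satisfy $y = gxg^{-1}$ and $x = gyg^{-1}$. So $y$ is determined by $x$, and there are at most $n$ fixed points for each $g$. There are three transpositions, contributing at most $3n^2$ in total.

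Now take the 3-cycle $(x,y)\mapsto(yx^{-1}, x^{-1})$ twisted by conjugation by $g$ (carefully normalised in the non-commutative setting). A fixed pair satisfies $x = g\,y x^{-1} g^{-1}$ and $y = g x^{-1} g^{-1}$. The second equation determines $y$ from $x$, so again there are at most $n$ fixed points per $g$. The two 3-cycles contribute at most $2n^2$.

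I would need to double check that the normalisation map identifies the $S_3$-action on $G^2$ correctly in the non-commutative case. The permuted triple $(x,y,z)\mapsto$ a reordering followed by renormalisation gives maps like $(x,y)\mapsto(y,x)$, $(x,y)\mapsto (x y^{-1}, y^{-1})$, etc.; in each case one coordinate of a fixed point is determined by the other, which is all that is used.

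Adding the three contributions gives a total of at most $\tfrac{5}{8}n^3 + 5n^2$, and dividing by $6n$ yields $\frac{1}{6}(5n^2/8 + 5n)$. The only delicate point is the centraliser second-moment bound, which rests on $\#Z(G)\le n/4$ and $\#C_G(g)\le n/2$ for noncentral $g$.
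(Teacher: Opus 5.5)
Your proof is correct and follows the same route as the paper: Burnside's Lemma for $S_3\times G$ acting on $G^2$ (with $G$ acting by conjugation), $\#C(g)^2$ fixed points for $(1,g)$, and at most $n$ fixed points for every other group element, which gives the $5n^2$ term. The only variation is in the second-moment estimate. You pair the sharper bound $\#Z(G)\le n/4$ (from $G/Z(G)$ being non-cyclic) with the cruder bound $\#C(g)^2\le n\cdot n/2$, whereas the paper pairs $\#Z(G)\le n/2$ with $\#C(g)^2\le n^2/4$; both land exactly at $\frac{5}{8}n^3$.
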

\begin{proof}
We use Burnside's Lemma to count the number of orbits of $G^2$ under the action of $S_3 \times G$ as in \Cref{df:skewhexagon}. For $g, h \in G$, we write ${}^gh$ for the conjugate $ghg^{-1}$.
The fixed points of $(1,g) \in S_3 \times G$ are precisely the elements 
${(x,y)} \in G^2$ with $x,y \in C(g)$ (where $C(g)$ is the centralizer of $g$).
If $\tau$ is the transposition in $S_3$ that sends $(x,y) \mapsto (y,x)$, then 
any pair $(x,y) \in G^2$ fixed by
$(\tau, g)$ must satisfy $y = {}^gx$.
There are therefore at most $n$ such pairs. The same is true for any other transposition. Finally, if $\rho$ is the 3-cycle that sends $(x,y) \mapsto (yx^{-1}, x^{-1})$ then any pair $(x,y) \in G^2$ fixed by $(\rho, g)$ must satisfy $y = {}^gx^{-1}$. Again, there are at most $n$ of these. The same is true for the inverse 3-cycle.

From this discussion and Burnside's Lemma, it follows that an upper bound for the number of orbits is 
\begin{equation}\label{eq:burnside}
    \#\hexagon(G) \leq \frac{1}{6n}\sum_{g  \in G} ((\#C(g))^2 + 5n) = \frac{1}{6}\left(\frac{1}{n} \sum_{g \in G} \#C(g)^2 + 5n\right)
\end{equation}
Write $Z(G)$ for the center of $G$. We have
$C(g) = G$ for $g \in Z(G)$. For $g \notin Z(G)$, $C(g)$ is a proper subgroup of $G$, so has index at least 2. 
Note that $Z(G)$ itself is a proper subgroup of $G$, so also has index at least 2.
From this we conclude
\begin{multline*}
    \sum_{g \in G} \#C(g)^2 = \sum_{g \in Z(G)} \#C(g)^2 + \sum_{g \notin Z(g)} \#C(g)^2
    \leq n^2 \cdot \#Z(G) + \frac{n^2}{4} (n - \#Z(G))\\
    = \frac{3n^2}{4} \cdot \#Z(G) + \frac{n^3}{4} \leq \frac{3n^2}{4} \cdot \frac{n}{2} + \frac{n^3}{4} = \frac{5n^3}{8}
\end{multline*}
Putting this into (\ref{eq:burnside}) gives 
\[
    \#\hexagon(G) \leq \frac{1}{6}\left(\frac{5n^2}{8} + 5n\right)
    \qedhere
\]
\end{proof}
\begin{proof}[Proof of \Cref{thm:0skew}]
    Let $G$ be a finite nonabelian group of order $n-1$.
    By \Cref{prop:skewfullyfaithful}, a skew hyperfield on $G$ is determined by the data of (i) an element in the center of $G$ of order at most 2 and (ii) a subset of the hexagons of $G$. The number of skew hyperfields on $G$ is therefore bounded above by
    \[
        (n-1) \cdot 2^{\#\hexagon(G)} \leq 
        n \cdot 2^{\frac{1}{6}(5(n-1)^2/8 + 5(n-1))} = 2^{n^2/6} \cdot  2^{-n^2/16 + O(n)}
    \]
    using \Cref{prop:5/8}.
    Since there are at most $e^{O((\log n)^4)}$ groups of order $n-1$ by \Cref{thm:numgroups} and 
    $2^{n^2/6 + O(n)}$ hyperfields of order $n$ by \Cref{cor:asymptotic}, we have
    \[
        \frac{\#S_n - \#H_n}{\#H_n}
        \leq \frac{e^{O((\log n)^4)} \cdot 2^{n^2/6} \cdot  2^{-n^2/16 + O(n)}}{2^{n^2/6}} = 2^{-n^2/16+O(n)}
    \]
    from which the claim follows.
\end{proof}
\end{document}